\newtheorem{thm}{Theorem}[section]
\newtheorem{theorem}[thm]{Theorem}
\newtheorem{lemma}[thm]{Lemma}
\newtheorem{prop}[thm]{Proposition}
\newtheorem{cor}[thm]{Corollary}
\newtheorem{corollary}[thm]{Corollary}
\theoremstyle{definition}
\newtheorem{definition}[thm]{Definition}
\newtheorem{defn}[thm]{Definition}
\newtheorem{example}[thm]{Example}
\newtheorem{remark}[thm]{Remark}
\newtheorem{question}[thm]{Question}
\newtheorem*{ThmA}{Theorem A}
\newtheorem*{ThmB}{Theorem B}
\newtheorem*{ConjC}{Conjecture C}
\newtheorem*{ConjD}{Conjecture D}
\newcommand\CC{\mathbb{C}}
\newcommand\RR{\mathbb{R}}
\newcommand\ZZ{\mathbb{Z}}
\newcommand\R{\mathbb{R}}
\newcommand\F{\mathbb{F}}
\newcommand\HH{\mathbb{H}}
\newcommand\minus{\smallsetminus}
\newcommand\nothing{\varnothing}
\DeclareMathOperator{\homeo}{Homeo}
\DeclareMathOperator{\Id}{Id}
\DeclareMathOperator{\interior}{int}
\DeclareMathOperator{\map}{Map}
\DeclareMathOperator{\rk}{rk}
\def\ol#1{{\overline {#1}}}
\newcounter{separated}
\newcommand{\commentjd}[1]
{}
\newcommand{\commentph}[1]
{}
\newcommand{\commentem}[1]
{}
\newcommand{\commenth}[1]
{}
\begin{document}

\title
{Fibers of maps to totally nonnegative spaces}
\author{James F.~Davis}
\address{Indiana University\\Bloomington, IN}
\email{jfdavis@iu.edu}
\author{Patricia Hersh}
\address{University of Oregon\\Eugene, OR}
\email{plhersh@uoregon.edu}
\author{Ezra Miller}
\address{Duke University\\Durham, NC}
\email{ezra@math.duke.edu}

\begin{abstract}
This paper undertakes a study of the structure of  the fibers of the Chevalley exponentiation  maps 
$f_{(i_1,\dots ,i_d)}$. 
The fibers  
of these maps $f_{(i_1,\dots ,i_d)}$  encode the nonnegative real relations amongst exponentiated Chevalley generators. 
Our main theorems show that the fibers admit  cell stratifications, 
that  these cell 
stratifications 
have the same face posets as 
 interior dual block complexes of subword 
complexes, and that these posets 
are contractible.
 
We conjecture 
 that each  such 
fiber is a regular CW complex homeomorphic to the interior dual 
block complex of a subword complex.  
This conjecture  is shown to have  as a corollary  
a new proof of the Fomin--Shapiro Conjecture by way of 
general topological results  
regarding  approximating maps by homeomorphisms.

 \end{abstract}
 
\thanks{PH was supported by NSF grants  
DMS-1200730,  DMS-1500987, and DMS-1953931.  
EM was supported by NSF grants  DMS-1001437 and DMS-1702395.  JFD was supported by NSF grants 
DMS-121099, DMS-1615056, and the Simons Foundation Collaboration Grant 713226.}
\maketitle

\section{Introduction}\label{intro-section}

Let $U$ be the unipotent radical of a Borel subgroup in a reductive, connected
algebraic group defined and split over $\RR $.  The totally nonnegative real part of  
~$U$ is stratified into Bruhat cells. 
Fomin and Shapiro \cite{FS} 
conjectured in the case that the associated Weyl group $W$  is finite 
that this stratification restricted to the link of the identity element in the totally nonnegative real part  of  ~$U$  
is a regular CW decomposition of a topological closed ball, namely a decomposition into pieces  (called cells) 
each homeomorphic to an open ball in such a way that the closure of each cell is homeomorphic to a closed ball.  
They proved that this stratified space has Bruhat order as the 
partial order of closure relations on its cells, and they obtained homological results 
supporting their conjecture.  

This conjecture from \cite{FS} 
was proven  by Hersh  \cite{He}  
in a way  that heavily relied upon  a 
realization of   these spaces due to Lusztig   as images of maps 
$f_{(i_1,\dots ,i_d)}$ that were  studied extensively by Lusztig in \cite{Lu}. 
 Indeed these maps 
$f_{(i_1,\dots ,i_d)}$,  which are indexed by  
words for Coxeter group elements   
are  interesting and fundamental  in their own right. 
A main goal of  the present paper,  which may be regarded
as a sequel to \cite{He}, 
 is to better understand the structure of the 
  fibers of  these maps $f_{(i_1,\dots ,i_d)}$. 

Much of the interest in these
maps $f_{(i_1,\dots ,i_d)}$,   and in some  closely related  
 maps as well, 
comes from a desire to understand  their fibers (see e.g.\  \cite{BZ}, \cite{BFZ},  \cite{BFZ2}, \cite{KW} and \cite{PSW}). 
 One motivation for studying the fibers of $f_{(i_1,\dots ,i_d)}$   is  
the  fact that these fibers describe exactly  the nonnegative real 
relations amongst exponentiated Chevalley generators.  Lusztig became interested in  
 $f_{(i_1,\dots ,i_d)}$ 
due to a connection  he observed to his theory of canonical bases: when  applying a braid move that converts  
a  reduced word  $(i_1,\dots ,i_d)$ for the longest element to another reduced word $(j_1,\dots ,j_d)$, this results in a change of basis that converts each  positive  real 
point in  the fiber $f^{-1}_{(i_1,\dots ,i_d)}(p)$ to a positive  real  point  in  the fiber 
$f_{(j_1,\dots ,j_d)}^{-1}(p)$; remarkably, this change of basis  is  
a tropicalized version of the change of basis needed for  
converting from  
  Lusztig's  canonical basis expressed  in terms of the reduced word $(i_1,\dots ,i_d)$   to Lusztig's canonical basis 
  working in terms of  $(j_1,\dots ,j_d)$  instead 
 (see e.g.\  \cite{BZ}, \cite{Lu-canon} for more on this result of Lusztig).  
The study of the maps $f_{(i_1,\dots ,i_d)}$  
also served as one of the inspirations  for the theory of cluster algebras, by virtue of  the fact that  this change of basis operation  is closely  
related to the notion of  mutation in cluster algebras.

We first define $f_{(i_1,\dots ,i_d)}$ in the type A  case, since this is particularly concrete and conveys the general idea fairly well.   In this case,  $G = GL_n(\R)$ and $U = U^n$ is the subgroup of upper triangular matrices with ones along the diagonal.   For $1 \leq j \leq n-1$, let $x_j : \R \to U$ be the group homomorphism $x_j(t) = I_n + tE_{j,j+1}$ where $E_{j,j+1}$ is the $n \times n$-matrix with all zeroes, except for a single 1  in the $(j,j+1)$-position. 
For $Q = (i_1,\dots ,i_d)$, define the map
\begin{align*}
 f_Q = f_{(i_1,\dots ,i_d)} & : \R^d_{\geq 0} \to U^n \\
 (t_1, \dots, t_d) & \mapsto x_{i_1}(t_1) \cdots x_{i_d}(t_d).
\end{align*}

\begin{example}
In $GL_3(\R )$ we have  
$$f_{(1,2,1)}(t_1,t_2,t_3)  = \left( \begin{array}{ccc}
1 & t_1 + t_3 & t_1t_2\\
0 & 1 & t_2\\
0 & 0 & 1
\end{array} \right) .$$
\end{example}

Let $U^n_{\geq 0}$ be the totally nonnegative part of $U^n$, in other words the set of  matrices in $U^n$ all of whose minors  
are nonnegative.   Then
$$f_Q : \R^d_{\geq 0} \to U^n_{\geq 0},$$
since $x_j(t) \in U^n_{\geq 0}$ for $t \geq 0$ and $U^n_{\geq 0}$ is closed under multiplication.
\commenth{Next sentence just added (though there is an example later saying almost this exact same thing -- but it seems helpful for readers here):}  In this case, one may readily verify that  the braid relations $$x_i(t_1)x_{i+1}(t_2)x_i(t_3) = x_{i+1}\left(\frac{t_2t_3}{t_1+t_2}\right)x_i(t_1+t_3)x_{i+1}\left(\frac{t_1t_2}{t_1+t_3}\right)$$ hold  for any 
$t_1,t_2,t_3>0$ and any $i=1,2,\dots ,n-1$ as do   the commutation relations $x_i(t_1)x_j(t_2) = x_j(t_2)x_i(t_1)$ for any  $t_1,t_2\ge 0$ and any $|j-i|>1$; there are also  
relations $x_i(t_1)x_i(t_2) = x_i(t_1+t_2)$ called  ``modified nil-relations'' for each  $i\in \{ 1,2,\dots ,n-1\} $ and each  $t_1,t_2\ge 0$.  

All of this  
  generalizes to the context of the unipotent radical $U$ of a Borel subgroup in a reductive, connected algebraic group $G$ defined and split over $\R$.
  We briefly sketch this theory; for more details see Section \ref{positivity-background}.    
The Weyl group of $G$ is  a finitely generated 
 Coxeter group $W$ with a finite set $\Sigma = \{s_1, \dots, s_n\} $ of simple reflections as generators.    We  assume throughout this paper that 
 $W$  is a finite group. 
We say that $Q = (i_1,\dots ,i_d)$ is a word for $w \in W$ if  $w = s_{i_1} \dots s_{i_d}$, and we call  such $Q$ a reduced word for $w$ if $d$ is minimal.  

Associated to each  simple reflection   $s_i\in W$  is  a group homomorphism $x_i : \R \to U$ with  $x_i(t)$ an exponentiated Chevalley generator (see e.g.\ \cite{Lu}).   Given a word  $Q = (i_1,\dots ,i_d)$, where $1 \leq i_j \leq n$ for $j=1,\dots ,d$, Lusztig considers the map 
\begin{align*}
 f_Q & : \R^d_{\geq 0} \to U \\
 (t_1, \dots, t_d) & \mapsto x_{i_1}(t_1) \cdots x_{i_d}(t_d),
\end{align*}
and defines the totally nonnegative part $U_{\geq 0}$ to be the subset of $U$ generated (as a monoid) by $x_j(t)$ for $t \geq 0$.  He also  notes that by results of \cite{Lo} and \cite{Wh} in type A this is equivalent to the usual definition for the totally nonnegative part of $U_{\geq 0}$ in that case.  

Lusztig proves many interesting things about the map  $f_Q$ when $Q$ is a reduced word; let  us now describe three of these things.   If $Q$ and $Q'$ are two different reduced words for the same element $w \in W$, then the images of $f_Q$ and $f_{Q'}$ coincide.   If $Q$ is  a reduced word, then $f_Q : \R^d_{>0} \to U$ is an embedding.   If $Q$ is a reduced word for the longest element $w_0$ of $W$, then the image of $f_Q:\RR_{\ge 0}^d\rightarrow U$ is $U_{\geq 0}$.    
 
\subsection{Main results}\label{main-result-section}

We now discuss the main results of our paper, namely the construction of a cell stratification for each fiber  of 
$f_Q$ (see Theorem A in Section ~\ref{A-section},  proven in  Theorem  ~\ref{p:cells}) 
and a combinatorial analysis of the partially ordered set  (poset)  of closure relations on the  strata (see Theorem B in Section ~\ref{B-section},  proven in  Propositions~\ref{p:fiberNerve} and~\ref{c:dualBlock}). 
In discussing our  results, 
we emphasize  the parallels that exist  between our results on fibers of $f_Q$ 
and previous results of Lusztig \cite{Lu}, Fomin-Shapiro \cite{FS}, Hersh \cite{He}, and Galashin-Karp-Lam \cite{GKL19} on the images of the maps $f_Q$.

First recall that a  {\em decomposition} of a topological space $X$ is a collection $\{X_\alpha \}_{\alpha \in I}$ of  subsets  whose union is $X$ such that any pair of sets are disjoint or coincide.     A {\em stratification} of $X$ is a decomposition in which $X_\alpha \cap \overline{X_\beta} \not = \emptyset$ implies $X_\alpha \subseteq\overline{X_\beta}$.  A {\em $d$-cell} is a topological space homeomorphic to the interior of the $d$-ball.  We include the case of a $0$-cell which is a point.  
 A {\em cell} is a $d$-cell for some $d$.  
  A {\em cell decomposition (respectively, cell stratification)} is a decomposition (respectively, stratification) where each $X_\alpha$ is a cell.

\commenth{We need to be careful  that most of the time  we are  intersecting $f_{(i_1,\dots ,i_d)}^{-1}(p)$ with $\RR_{\ge 0}^d$.
 Hopefully I have resolved this by now.}

There is a  decomposition of the nonnegative part $U^n_{\geq 0}$ of the unipotent  matrix 
group $U^n$ that can be described in three different ways.  First, one can specify a stratum by specifying a  subset of all generalized minors (as defined in \cite{FZ99})  and requiring that the generalized minors in the chosen  subset are positive and that  all other generalized minors are zero. 
 In type A, the generalized minors are exactly the  usual  matrix minors.   
The second description of strata (which only makes  sense in finite type)   is as follows.
 Let $Q = (i_1, \dots, i_d)$ be a reduced word for the longest element  $w_0 \in W$.   Then one can specify  a subset $S \subseteq \{1, \dots, d\}$ and define the stratum indexed by $S$ as 
$$
\{f_Q(t_1, \dots, t_d) \mid t_j > 0 \text{ for } j \in S \text{ and } t_j = 0 \text{ for } j \not \in S \} \subset U_{\geq 0}.
$$
Two different subsets give rise to the same stratum if and only if the subwords of $Q$ they index have the same Demazure product  
(see Definition ~\ref{d:demazure} and Lemma ~\ref{0-Hecke-to-Demazure}) as each other.
The third definition proceeds by representing an element $w \in W$ by a reduced word $R = (j_1, \dots, j_e)$ for $w$  and defining the
stratum   $U(w) = f_R(\R^e_{> 0})$ for $w\in W$, a notion which turns out not to  depend on the choice of reduced word for $w$.
This definition has the advantage that each $U(w)$ is nonempty and  that  $w \not = w'$ implies 
 $U(w)$ and $U(w')$ are disjoint. 
The fact that  all three definitions give the same decomposition follows from results   in \cite{FZ},  \cite{FZ99}, \cite{FS}  and \cite{Lu}.  See Lemma 2.10 in \cite{FS} for a summary of how the first definition relates to the other two and see \cite{Lu} for the  equivalence of the other two.

\subsection{Cell stratification and other topological results}\label{A-section}

One of the main accomplishments of Lusztig's paper \cite{Lu} is that the above decomposition $\{U(w)\}$ gives a cell stratification of $U_{\geq 0}$.   Likewise
one of the  main results of our work is:

\begin{ThmA}
For any word $Q$ of length $d$, any $w\in W$, and any $p\in U(w)$, 
$$
\{  f^{-1}_Q(p) \cap \R^S_{>0 }\}_{S \subseteq \{1,\dots, d\}}
$$ 
gives a cell stratification of  the nonnegative real part of  $f^{-1}_Q(p)$, namely of 
 $f^{-1}_Q(p) \cap \RR_{\ge 0}^d$, 
letting 
$\R^S_{>0}$  denote the subset of $\R^d_{\ge 0}$ whose strictly positive parameters are  exactly those indexed by $S$.  
\end{ThmA}

Theorem A is proven in Theorem~\ref{p:cells}.  This in turn is a consequence of the more technical Theorem~\ref{thm:CW} (in combination   
with Lemma ~\ref{decomp-is-strat} which proves that the cell decomposition given in  
Theorem ~\ref{thm:CW} is a cell stratification).   
Theorem ~\ref{thm:CW}  pulls together nearly all of our topological  
 results concerning the fibers, in particular, most of the results of  Section~\ref{s:CW}, making it one of the main 
 achievements in our paper.  
 
 Other structural  results of Section ~\ref{s:CW} that may be of independent interest  include:

\begin{enumerate} 
\item
 a generalization (see Theorem ~\ref{generalized-Lusztig-homeo}) of Lusztig's result  from \cite{Lu}  that the map 
 $f_{(i_1,\dots ,i_d)}$ given by any reduced word $(i_1,\dots ,i_d)$   is a homeomorphism from  $\RR_{>0}^d$ to $Y_{\delta (i_1,\dots ,i_d)}^o$, and 
\item
a recursive result (see Lemma ~\ref{full-theorem}) showing that certain fibers are homeomorphic to other fibers given by strictly shorter words.
\end{enumerate}

\subsection{Determination of poset of closure relations and other combinatorial results}\label{B-section}

Fomin and Shapiro showed in \cite{FS} that the poset of  closure relations on  nonempty cells  given by the cell stratification $\{U(w)\}_{w \in W}$ of $U_{\geq 0}$  is the  Bruhat order on the Weyl group $W$.   
 Inspired in part by the results of Bj\"orner and Wachs we describe next, they also conjectured that this cell stratification is a regular CW decomposition in [\cite{FS}, Conjecture 1.10],  a conjecture we henceforth call the Fomin-Shapiro Conjecture.  
%
Bj\"orner and Wachs \cite{BW} showed for $W$ a finite group that this poset is thin and shellable.  This combines with  general results of Bj\"orner \cite{Bj} to imply that the Bruhat order is the partial order of closure relations on the cells of a regular CW complex $K$ homeomorphic to a closed ball.    

\begin{example}
The strata in Figure 1 are given by the subwords $Q$ of $(1,2,1,2,1,2)$ having Demazure product $s_1s_2$  (in the sense of 
Definition ~\ref{d:demazure} and Lemma ~\ref{0-Hecke-to-Demazure}), with the dimension of the 
stratum  given by such $Q$ equalling $|Q|-l(s_1s_2) = |Q|-2$ where $|Q|$ denotes the number of letters in $Q$.  There are six  strata of dimension 0 given by the six subwords of 
$(1,2,1,2,1,2)$ that are reduced words for $s_1s_2$, eight strata of dimension 1 given by the non-reduced subwords of the form $(1,1,2)$ and $(1,2,2)$ 
and three  strata of dimension 2 given by the  non-reduced subwords of the form $(1,1,2,2), (1,1,1,2)$ and $(1,2,2,2)$.
\end{example}

 \begin{figure} 
\begin{tikzpicture}[scale=0.5]
%

\path [fill=lightgray] (2,8) -- (4,4) -- (8,7) -- (2,8);
\path [fill=lightgray] (4,4) -- (8,7) -- (14,8) -- (10,5) -- (4,4);
\path [fill=lightgray] (10,5) -- (4,4) -- (8,2) -- (10,5);

\draw (2,8)--(4,4)--(8,2)--(10,5)--(14,8);
\draw (2,8)--(8,7)--(14,8);
\draw (4,4)--(8,7);
\draw (4,4)--(10,5);

\node [below] at (8,2){\scriptsize $(0,0,0,0,a,b$)}; 
\node [below left] at (4,4){\scriptsize $(a,0,0,0,0,b$)}; 
\node [above left] at (2.1,7.8){\scriptsize $(a,b,0,0,0,0)$}; 
\node [above] at (8,7.2){\scriptsize $(a,0,0,b,0,0)$}; 
\node [above right] at (14,7.8){\scriptsize $(0,0,a,b,0,0)$}; 
\node [below right] at (10,5.1){\scriptsize $(0,0,a,0,0,b)$}; 

\end{tikzpicture}
%
%
%
\caption{Cell stratification in type A  for $f^{-1}_{(1,2,1,2,1,2)}(M)\cap \RR_{\ge 0}^6$  for any fixed $M = x_1(a)x_2(b) \in U(s_1s_2)$, i.e., for any fixed  choice of  $a,b>0$}
\end{figure}

 In  analogy with  the work of  Bj\"orner and Wachs determining  the homotopy type  
 of each open interval in Bruhat 
 order, namely in  the poset of closure relations for the image of $f_Q$,  we will likewise determine the homotopy type  
 of the face poset   for each of  the fibers of $f_Q$.  
Doing so 
involves first determining the partial order of closure relations on the strata for each fiber, a  result that parallels work of 
Fomin and Shapiro regarding  the face poset for the Bruhat  stratification of the  image of $f_Q$.
  The subword complexes of Knutson and Miller and their interior dual block complexes
 play an important role in this combinatorial  aspect  of understanding the fibers of $f_Q$, bringing to clear view 
 ideas first hinted at in work of \cite{AH}.   

\begin{ThmB}
For any word $Q = (i_1, \dots, i_d)$, 
 the face poset for the cell stratification  for  $f_{(i_1,\dots ,i_d)}^{-1}(p) \cap \RR_{\ge 0}^{d}$   induced by the  natural 
 cell stratification  of $\RR_{\ge 0}^d $ (given by specifying for each stratum which coordinates are positive and which are 0)
 is isomorphic to  the face poset  of the  interior dual block complex of the 
subword complex $\Delta (Q,w)$ for $p\in U(w)$.  
Furthermore,  the interior dual block complex of any nonempty subword complex $\Delta (Q,w)$  is 
 contractible.
\end{ThmB}

Theorem B is proven in Propositions~\ref{p:fiberNerve} and~\ref{c:dualBlock}.
This bridge to subword complexes allows us to use gallery-connectedness and other properties of subword complexes to prove in 
Lemma ~\ref{intersect-with-simplex}  for
any  $p\in U(\delta (i_1,\dots ,i_d))$  that $$f_{(i_1,\dots ,i_d)}^{-1}(p)\cap \RR_{\ge 0}^d = f_{(i_1,\dots ,i_d)}^{-1}(p)\cap \Delta_K^{d-1}$$ 
for some $K>0$ where $\Delta_K^{d-1}$ denotes the $(d-1)$-simplex consisting of the points $(t_1,\dots ,t_d)\in \RR_{\ge 0}^d$ whose 
coordinates sum to $K$.  

\begin{remark}\label{non-pure-example}
One might hope that each  fiber $f_{(i_1,\dots ,i_d)}^{-1}(p)\cap \RR_{\ge 0}^d$ 
is a closed ball, or at least is  ``pure'', namely that all of its maximal strata (in its stratification induced by the natural cell stratification of $\RR_{\ge 0}^d$)
have the same dimension as each other.   However, there are counterexamples 
to both of  these statements.  

For example, consider
$(i_1,\dots ,i_d) = (1,3,2,1,3,2)$  for  any choice of  $p\in U(w)$ 
for  $w = s_1 s_3 s_2$.  
Theorems A and B  together imply that  the maximal cells are not all of the same dimension as each other.  See Figure 2 
 for the subword complex (in light grey) and its interior dual block complex (in dark grey), the latter of which has exactly  the cells in our stratification for 
$f_{(i_1,\dots ,i_d)}^{-1}(p)\cap \RR_{\ge 0}^d$.  The five vertices in the interior dual block complex in Figure 3  are labeled with the five subwords 
of $(1,3,2,1,3,2)$ that are reduced words for $s_1s_3s_2$.  
\end{remark}

 \begin{figure} \label{interior-dual-figure}
\begin{tikzpicture}[scale=0.8]

\path [fill=lightgray] (0,0) -- (0,4) -- (4,4) -- (6,2) -- (4,0) -- (0,0);
\path [fill=gray] (1,2) -- (2,3) -- (3,2) -- (2,1) -- (1,2); 

%
\draw (0,0)--(0,4)--(4,4)--(0,0); 
\draw (0,0)--(4,0)--(6,2)--(4,4);
\draw (4,0)--(0,4);
\draw (4,0)--(4,4);

\draw (1,2)--(2,1)--(3,2)--(5,2);
\draw (1,2)--(2,3)--(3,2);



\node [left] at (1,2){\scriptsize $(-,-,-,1,3,2)$}; 
\node [below] at (2,1){\scriptsize $(-,3,-,1,-,2)$}; 

\node [above] at (2,3){\scriptsize $(1,-,-,-,3,2)$}; 

\node [above] at (3.4,2){\scriptsize $(1,3,-,-,-,2)$}; 

\node [right] at (5,2){\scriptsize $(1,3,2,-,-,-)$}; 

\draw [fill] (1,2) circle [radius=0.05]; 
\draw [fill] (2,1) circle [radius=0.05]; 
\draw [fill] (3,2) circle [radius=0.05]; 
\draw [fill] (2,3) circle [radius=0.05]; 
\draw [fill] (5,2) circle [radius=0.05];

\end{tikzpicture}
%
%
%
\caption{The subword complex $\Delta (Q,w)$ and its interior dual block complex  for $Q=(1,3,2,1,3,2)$ and $w=s_3s_1s_2$}
\end{figure}

\subsection{An illustrative example}\label{example-and-where-proven}
Our topological results described in Section ~\ref{A-section} and proven in Section ~\ref{s:CW} may  be combined to deduce 
that the  portion of the fiber $f^{-1}_{(1,2,1,2,1)}(M)\cap \RR_{\ge 0}^5$ depicted (up to homeomorphism)  by the interior of the  region  in Figure 3 is homeomorphic to 
$$\bigg\{ (t_1,t_2) \in \RR^2 \mid 0 < t_1 < 5 \hspace{.07in}{\rm and} \hspace{.07in} 0 < t_2 <  \frac{7}{12-t_1} \bigg\} $$  
which is homeomorphic to $(0,1)^2$  and that each stratum depicted in Figure 3 is homeomorphic to 
$(0,1)^{s}$ for some $s\ge 0$.   

 \begin{figure} 
\begin{tikzpicture}[scale=0.6]

\path [fill=lightgray] (0,0) -- (4,0) -- (4,2.5) -- (2.5,4) -- (0,4) -- (0,0);

%
\draw (0,0)--(0,4)--(2.5,4); 
\draw (0,0)--(4,0)--(4,2.5)--(2.5,4);



\node [left] at (0,0){\scriptsize $(0,0,5,1,7)$}; 
\node [left] at (0,4){\scriptsize $(0,\frac{7}{12},12,\frac{5}{12},0$)}; 

\node [right] at (4,0){\scriptsize $(5,0,0,1,7)$}; 

\node [right] at (4,2.5){\scriptsize $(5,1,0,0,7)$}; 

\node [right] at (2.5,4.2){\scriptsize $(5,1,7,0,0)$};

\draw [fill] (0,0) circle [radius=0.05]; 
\draw [fill] (0,4) circle [radius=0.05]; 
\draw [fill] (4,0) circle [radius=0.05]; 
\draw [fill] (4,2.5) circle [radius=0.05]; 
\draw [fill] (2.5,4) circle [radius=0.05]; 

\end{tikzpicture}
%
%
%
\caption{The fiber $f^{-1}_{(1,2,1,2,1)}(M)\cap \RR_{\ge 0}^5$ up to homeomorphism for 
$M = x_1(5)x_2(1)x_1(7) \in  U(s_1s_2s_1)$ in type A}
\end{figure}

Specifically, Corollary ~\ref{closed-interval}  shows that certain parameters $t_i$ amongst  
$\{ t_1,t_2\}$ each  take a closed interval $[0,t_i^{\max }(k_1,\dots ,k_{i_1})]$ of possible values within  the entire fiber once values $k_1,\dots ,k_{i-1}$  for the  parameters $t_1,t_2,\dots ,t_{i-1}$  have already  been specified, Lemma ~\ref{redundant-non-max-which-delta} characterizes these maximal values $t_i^{\max }(k_1,\dots ,k_{i-1})$,  Lemma ~\ref{defined-forced-value} shows that each of the other parameters $t_j$  takes a unique value that is a function of the values  $k_1,\dots ,k_{j-1}$ chosen for the parameters  $t_1,\dots ,t_{j-1}$, Lemma ~\ref{other-char} describes which parameters take  a range of values and which  have  a uniquely forced value,  
Lemma ~\ref{0-iff-max-to-left} fits these and other results  together to give the structure of 
 the interior of the fiber,  Theorem ~\ref{thm:CW}  proves that there is a homeomorphism $h$  from  $[0,1)^s$ to  a union of strata 
   with $s$ equalling  the number of parameters that take ranges of values, and finally  Theorem ~\ref{p:cells}  shows how the 
   homeomorphism $h$  
    restricts to a homeomorphism from $(0,1)^s$ to the largest stratum, which in  our example is the interior of the region in 
    Figure 1.  We have  $s=2$ with  $t_1^{\max } = 5$  and  $t_2^{\max } =  \frac{7}{12-t_1}$ in our example.
This upper bound of 5 on  $t_1$  is determined  using 
Corollary ~\ref{any-l}, while Lemma ~\ref{can-tune-down-parameters}  yields the upper bound of $\frac{7}{12-t_1}$ on $t_2$.  The 
upper bound value of $\frac{7}{12-t_1}$ for $t_2$  may be calculated  using the braid relation
$$x_1(5-t_1)x_2(1)x_1(7) = x_2\left(\frac{1\cdot 7}{(5-t_1)+ 7}\right)x_1((5-t_1)+7)x_2\left(\frac{(5-t_1)\cdot 1}{(5-t_1) + 7}\right),$$ with our desired upper
bound for $t_2$ appearing as the leftmost parameter on the right side of this braid relation; the applicability of this calculation is justified 
by the 
equivalence of $x_1(t_1)x_2(t_2)x_1(t_3)x_2(t_4)x_1(t_5)  = M$ to the equation $$x_2(t_2)x_1(t_3)x_2(t_4)x_1(t_5) = x_1(-t_1)M = x_1(5-t_1)x_2(1)x_1(7)$$ for $0\le t_1 <  5$ and by the fact that $t_5$ is forced to be 0 when $t_1$ does not take its maximal possible value but $t_2$ does take its maximal possible value.      As a word of caution, notice for  $t_1=5$ and $t_2=1$ that  the value for the parameter $t_3$ is not uniquely determined by $t_1$ and $t_2$, whereas for each choice of values $k_1,k_2$ for $t_1,t_2$ with $0\le k_1 < t_1^{\max } = 5$ and $0\le k_2 < t_2^{\max }(k_1) = \frac{7}{12-k_1}$ the values of $t_3,t_4,t_5$ are uniquely determined.  

In case it helps readers better visualize  this example, 
$f_{(1,2,1,2,1)}^{-1}(M)\cap \RR_{\ge 0}^5$ is exactly 
$$ \{ (t_1,t_2,t_3,t_4,t_5)\in \RR_{\ge 0}^5 \mid 
t_1+t_3+t_5=12\hspace{.075in}{\rm and} \hspace{.075in} t_2 + t_4 = 1\hspace{.075in}{\rm and}\hspace{.075in}  t_1t_2 + t_1t_4 + t_3t_4 = 5\} ,$$
as one may verify  by equating entries in the matrix  equation $f_{(1,2,1,2,1)}(t_1,t_2,t_3,t_4,t_5) = M$.  It follows that  $t_4$ and $t_5$ are each functions of $t_1,t_2$ and $t_3$, allowing us to  project  to the first three coordinates.   Substituting $t_4 = 1-t_2$ simplifies the nonlinear equation  into   $t_1 + t_3 - t_2t_3 = 5$. 
In this projection to $\RR_{\ge 0}^3$, one may show that  the boundary of the  fiber $f_{(1,2,1,2,1)}^{-1}(M) $ 
includes the pair of 0-cells  $(0,0,5)$ and $(5,0,0)$ as well as  the straight line segment between them, and likewise includes the straight  line segment from the 0-cell $(5,0,0)$ to the 0-cell  $(5,1,0)$, as well as the straight line segment from  the  0-cell $(5,1,0)$ to the  0-cell $(5,1,7)$.  It is also possible to show that the boundary of the fiber includes the curve from $(0,0,5)$ to the 0-cell  $(0,\frac{7}{12},12)$ given by the pair of equations $t_1 = 0 $ and $0 = 5-t_3 + t_2t_3$.    Finally, the boundary  includes the curve from $(0,\frac{7}{12},12)$ to $(5,1,7)$ given by the  pair of equations $7 = t_2t_3$ and $t_1 + t_3 = 12$.   

The 0-cells in the fiber  are indexed by the subwords of $(1,2,1,2,1)$ that are reduced words for 
$s_1s_2s_1$, since for each such reduced subword  there is a unique choice of nonnegative values for the parameters in the  positions providing the support for the reduced word  when all  other parameters are set to 0.   The 1-cells are indexed  by the subwords  of $(1,2,1,2,1)$ with Demazure product $s_1s_2s_1$  having four letters; in this case, the parameter given by the letter that is absent from the subword  is set  to 0, and there is a curve of possibilities for the 4-tuple of nonzero parameters, with this curve going from one 0-cell to another 0-cell in the fiber.    Our combinatorial results described in Section ~\ref{B-section}  and proven in Section ~\ref{sub:whole-fiber} predict  this set of five 0-cells and five  1-cells  for the boundary of  this  fiber.

\subsection{A pair of conjectures}

Next we  formulate a pair of conjectures regarding the structure of the fibers of $f_{(i_1,\dots ,i_d)}$.  
 If verified, these  would  also yield  a new proof of the Fomin-Shapiro Conjecture, as  justified and explained in 
depth in Section ~\ref{s:fsConj}.

\begin{ConjC}\label{ConjC}
Any fiber $f_{(i_1, \dots, i_d)}^{-1}(p)\cap \RR_{\ge 0}^d$ is either empty or contractible.  
\end{ConjC}

Conjecture C is a consequence of Conjecture D, stated shortly.   To understand better the statement of Conjecture D and how it is connected to  Conjecture  C, we
first state a couple of facts.    Using results of Lusztig discussed in the background section, it  is straightforward to show
that a fiber $f_{(i_1,\dots ,i_d)}^{-1}(p)\cap \RR_{\ge 0}^d$  is nonempty if and only if $p\in U(\delta (i_1,\dots ,i_d))$ where $\delta $ is the Demazure product (defined in Proposition ~\ref{p:demazure} and the discussion thereafter).
By Lemma ~\ref{intersect-with-simplex}, our cell stratification for $f_{(i_1,\dots ,i_d)}^{-1}(p)\cap \RR_{\ge 0}^d$  from 
Theorem B 
 may be regarded as the  
cell stratification for $f_{(i_1,\dots ,i_d)}^{-1}(p)\cap \Delta_K^{d-1}$  induced by the simplicial cell stratification of the 
simplex $\Delta_K^{d-1} = \{ (t_1,\dots t_d)\in \RR_{\ge 0}^d \mid  \sum_{i=1}^d t_i = K\}$, allowing us now  to formulate Conjecture D.

\begin{ConjD}\label{t:subword}
For any word $Q = (i_1, \dots, i_d)$ and any $p\in U(\delta (i_1,\dots ,i_d))$, 
 the cell stratification  for  $f_{(i_1,\dots ,i_d)}^{-1}(p) \cap  
  \RR_{\ge 0}^d$ 
   induced by the simplicial cell stratification  
of  the simplex   $\Delta^{d-1}_K$ 
 is  a
regular CW complex. 
\end{ConjD}

Given that we have shown in Theorem B that the closure  poset for  the cell stratification of $f_{(i_1,\dots ,i_d)}^{-1}(p) \cap \RR_{\ge 0}^{d}$  equals the face poset of the regular CW complex given by the interior dual block complex of the subword complex,  and given that we have also determined  the topological structure of the interior dual block complex of any subword complex, Conjecture D implies that there is a cell preserving homeomorphism between $f_{(i_1,\dots ,i_d)}^{-1}(p) \cap \RR_{\ge 0}^{d}$ 
and 
the interior dual block complex given by $(i_1,\dots ,i_d)$ and $w\in W$ for $p\in U(w)$.   
Hence Conjecture D implies Conjecture C.  

Our results in Section  ~\ref{s:CW} provide what we believe should be significant  steps towards proving Conjecture D, in that the cell stratification given there is the decomposition of $f_{(i_1,\dots ,i_d)}^{-1}(p)\cap \Delta^{d-1}_K$  
that we are conjecturing is a regular CW decomposition in Conjecture D.  Many of the cell incidences are shown to be regular
incidences in Section ~\ref{s:CW}.  
Remark ~\ref{tying-together} explains in more detail  how the results in  Sections ~\ref{sub:whole-fiber}  and ~\ref{s:CW} provide steps towards a proof of 
Conjecture D (and thereby also   towards a new  proof of the Fomin-Shapiro Conjecture).  

\subsection{Fomin-Shapiro Conjecture and related prior work} 

We next describe the Fomin-Shapiro Conjecture as well as  some key known  results regarding the images  
of the map $f_{(i_1,\dots ,i_d)}$ and the images  of other closely related maps.  
The fact that  every finite CW complex is compact leads us now to  turn our attention to the link of the identity in $U_{\geq 0}$.
Let $Q$ denote  a reduced word for the longest element in $W$,  and let $d$ be the length of  $Q$.   Then the link of the identity of $U_{\geq 0}$ is defined to be $f_Q(\Delta^{d-1})$ where $\Delta^{d-1}$ denotes the $(d-1)$-simplex consisting of those points $(t_1,\dots ,t_d)\in \RR_{\ge 0}^d$ whose coordinates sum to 1.  
 The link still has a cell stratification by virtue of the cone structure of the map $f_Q$ resulting from the fact that $f_Q$ commutes with scaling (up to a simple homeomorphism); for example, in type A the impact of scaling each  of the parameters $t_1,\dots ,t_d$ by the same  positive real $k$ is that of scaling each of  the entries $M_{i,i+j}$  in the matrix $M := f_Q(t_1,\dots ,t_d)$  by $k^j$.  The face poset for the link (including the empty cell) is the Bruhat order on the Weyl group.    That is, the strata in the link are defined to be the sets  $U(w)\cap f_Q(\Delta^{d-1})$ indexed by the elements $w\in W$.
     For  $w \in W$, define $Y_w^o = U(w) \cap f_Q(\Delta^{d-1})$ 
     and $Y_w = \overline{U(w)} \cap f_Q(\Delta^{d-1})$.

The theories and the results of Lusztig, Bj\"orner, and the team of Bj\"orner and Wachs all provided pieces of a story that Fomin and Shapiro pieced together, leading Fomin and Shapiro to 
give a cell decomposition $\{Y_w^o\}_{w \in W}$ of the link of the identity in $U_{\geq 0}$
and conjecture that this cell decomposition 
 is a regular CW complex with Bruhat order as its poset of closure relations.    Hersh proved this conjecture known as the 
 Fomin-Shapiro Conjecture in \cite{He} where  she also deduced the  
 corollary that the link of the identity  in $U_{\ge 0}$ 
is homeomorphic to a closed ball.
In type A,  a different proof of the homeomorphism type of the closure of the big cell  for the 
image of $f_{(i_1,\dots ,i_d)}$ 
for the special  case of $(i_1,\dots ,i_d)$ a reduced word for 
longest element 
was later  given  in \cite{GKL}.  
Then Galashin, Karp and Lam made a major advance in this area in  \cite{GKL19}, proving that all cell closures are homeomorphic to closed balls for the totally nonnegative real part of any partial  flag variety.  This included  the case of the Grassmannian which had been conjectured by Postnikov.   Their results also gave  a second proof of the Fomin-Shapiro Conjecture.  In contrast to the more elementary approach of \cite{He},  the proofs in \cite{GKL19}  relied on the Poincar\'e Conjecture. 

 In each of these papers   of Hersh and 
of Galashin, Karp and Lam, the focus was on understanding the images of the maps.  These papers all  left  completely  open the question of understanding the fibers of these maps.  
Theorems A and B  here  shed  light on the topological  structure and the  combinatorial  structure, respectively,  of the fibers. 
Conjectures C and D suggest much more structure that we believe is in fact present in the fibers.

\subsection{Brief layout of the paper} 

In Section ~\ref{s:topology}, 
 we review  background,  doing so in a way that aims 
 to  make this paper accessible to readers coming  from an assortment of fields including combinatorics, topology and representation theory.   
 In Section ~\ref{sub:whole-fiber}, we determine  the combinatorial structure of 
fibers in terms of interior dual block complexes of subword complexes; 
we also prove in Section ~\ref{sub:whole-fiber}  that the unique regular CW complexes
having the same face posets as our fibers are contractible.  Section ~\ref{s:CW} proves that  the standard regular CW decomposition of 
 a simplex restricted to any fiber $f_{(i_1,\dots ,i_d)}^{-1}(p)\cap \RR_{\ge 0}^d$ of $f_{(i_1,\dots ,i_d)}$ 
 gives  a cell decomposition of $f_{(i_1,\dots ,i_d)}^{-1}(p)\cap \RR_{\ge 0}^d$ and proves  this is a cell stratification of 
 $f_{(i_1,\dots ,i_d)}^{-1}(p)\cap \RR_{\ge 0}^d$. 
Finally, we show in Section ~\ref{s:fsConj} how  contractibility of fibers, namely Conjecture C above, 
would combine with other results in this paper and  with  results in the literature regarding approximating maps by 
homeomorphisms  to yield a new   proof of the Fomin-Shapiro Conjecture.

\section{Background}\label{s:topology}

\subsection{Cell decompositions and their  posets of closure relations}\label{strat-defs}


  A   {\em finite CW complex}  is a decomposition of a Hausdorff space into a finite number of cells so that (i) a set is closed if and and only if its intersection with the closure of each cell is closed, (ii) the topological boundary of every $d$-cell is contained in a finite union of cells of dimension strictly less than $d$, and (iii)
 for every $d$-cell there is a continuous surjective map from an $d$-ball to the closure of the cell which restricts to a homeomorphism from the interior of the $d$-ball to the cell.  A map satisfying (iii) above is called a {\em characteristic map} and the restriction of a characteristic map to the sphere $S^{d-1}$ is called an {\em attaching map}.  
A map $f : A \to B$ is an {\em embedding} if it is a homeomorphism onto its image.     A  {\it regular CW
complex} is a CW complex such 
that for each cell there exists an attaching map which is an embedding. 
   The cells of a  regular CW complex comprise a cell stratification (a notion defined in Section ~\ref{main-result-section}).  
     Conversely, the face poset  of the 
     cells
of a regular CW complex determines the CW complex (up to homeomorphism).

A stratification induces a partial order on the index set by defining $\alpha \leq \beta$ if and only if $X_\alpha \subseteq \overline{X_\beta}$.
%
That is, the {\it closure poset} (or {\it face poset}) 
of a cell stratification
is the partial order on cells given by $\sigma \le \tau $ iff $\sigma
\subseteq \ol\tau$. 
  The {\it order complex} of a finite partially ordered set is the abstract simplicial complex whose $i$-faces are the chains
$v_0 < v_1 < \cdots < v_i$ of $i+1$ comparable elements in the poset.  
\begin{remark}
When we speak of a poset having a topological property such as being contractible, we are saying that its order complex has this property.  
\end{remark}

A  poset is {\em graded} if for each $u\le v$, all paths from $u$ to $v$ have the same length.  A graded poset is {\em thin} if each rank 2 interval $[u,w]$ includes  exactly 2 elements $v_1,v_2$ satisfying $u < v_i  < w$.
In \cite{Bj}, Bj\"orner  defines a variant of the face poset of a CW complex by adding the empty set to the stratification given by cells.   We call this variant  the {\em closure poset of a CW complex}, and denote its unique minimal element (corresponding to the empty set) by 
$\hat{0}$.  
Under this convention, the closure poset  of any finite  regular CW complex 
will be a poset  graded by cell dimension 
with each open interval $(\hat{0},v) = \{ z \mid \hat{0} < z < v \} $ having order
complex that is homeomorphic to a sphere $S^{\rk v  -2}$.  Bj\"orner proved in \cite{Bj}
 that this sphericity requirement  together 
with having a unique minimal element and at least one other poset element is enough to 
ensure that a finite, graded poset is the closure poset of a regular CW complex;  
finite  graded posets with these properties are therefore  called {\it CW posets}.  Results of Danaraj and Klee from \cite{DK}  imply that finite graded  posets with unique minimal element and at least one additional element will be CW posets whenever  they are thin and shellable; this was used  in \cite{BW} to prove that Bruhat order is a CW poset, a fact we will use in Section ~\ref{s:fsConj}.

A  map $f:P\rightarrow Q$ from a poset $P$ to a poset $Q$ is a {\it poset map} if
$u\le v$ in $P$ implies $f(u)\le f(v)$ in $Q$.  The main example of a poset map appearing in this paper is a poset map from the poset of subwords of a fixed word $(i_1,\dots ,i_d)$ ordered by containment 
 to Bruhat order, namely a poset map previously studied in \cite{AH} that sends each subword to its Demazure product.

\subsection{Coxeter groups, Demazure product, and subword complexes} \label{s:coxeter}
\commentjd{I corrected a few minor misprints in \ref{s:coxeter}.   I added a discussion of the longest element.  I added the last remark.}


We will make use of numerous well known properties of 
finite Coxeter systems as well as versions of these properties that transfer to 
associated 0-Hecke algebras.  
The unsigned 0-Hecke algebra will emerge out of a need to use  
a non-standard  product on a Coxeter group called the 
Demazure product.  We now review these notions and properties  for later use. 

A  finite  \emph{Coxeter system} $(W,\Sigma)$ 
consists of a finite group $W$ 
and a finite set of generators $\Sigma$ so that $W$ has a presentation of the form
$$
W = \langle s_i\in  \Sigma \mid (s_is_j)^{m(s_i,s_j)} = e \rangle
$$
where the $m(s_i,s_j)$ are positive integers with $m(s_i,s_i) = 1$ and with $m(s_i,s_j) = m(s_j,s_i) \geq 2$ for $i \not = j$.  The set $\Sigma$ is a minimal generating set whose elements are called {\it simple reflections}.  \commenth{I deleted a sentence here that was redundant; it is implied by $m(s_i,s_i)=1$.}  
 The elements $s_i$ and $s_j$ commute if and only if  $m(s_i,s_j) =2 $.  Sometimes we refer to commutation relations $s_i s_j = s_j s_i$ as {\it short braid relations}.  
For any $m(s_i,s_j)$, the 
relation  $(s_is_j)^{m(s_i,s_j)} = e$ is equivalent to the {\it braid relation} $s_i s_j s_i \cdots = s_j s_i s_j \cdots$ where each side of the equation is a product of  $m(s_i,s_j)$ simple reflections  alternating between $s_i$ and $s_j$.
We call this a {\it long braid relation} for  $m(s_i,s_j) > 2$.  See \cite{Hu} or
\cite{BB} for background on Coxeter groups.

When $W\hspace{-1pt} = S_n$ is the symmetric group, we
can take $\Sigma = \{s_1,\ldots, s_{n-1}\}$, where $s_i
= (i~i+1)$ is an adjacent transposition.  In this case, the relations are
$s_i^2 = e$, $(s_is_j)^2 = e$ for $|j-i| > 1$ and
$(s_is_{i+1})^3 = e$ for $1\le i \le n-1$.  

An {\it  expression} for $w \in W$ is a product $s_{i_1}\cdots s_{i_d}$ of simple reflections 
equalling $w$ under the standard group-theoretic product.  
  This is called a {\it reduced 
expression} for $w$ if $d$ is minimal among all possible expressions for $w$; this minimal $d$ is called the \emph{length} of ~$w$, denoted $l(w)$.  
A \emph{word} of size~$d$ is an ordered sequence $Q = (i_1,
\ldots, i_d)$ comprised  of the subscripts on an expression $s_{i_1}\cdots s_{i_d}$, so in other words an ordered sequence  of subscripts  
each indexing an  element of~$\Sigma$.  We write $|Q|$ for the size $d$ of a word $(i_1,\dots ,i_d)$.  Since one may pass 
easily back and forth between an expression and the corresponding word, one often speaks in terms of words just because they encode the same data more compactly.    An ordered subsequence
$P$ of a word~$Q$ is called  a \emph{subword} of~$Q$, written $P \subseteq Q$.  The expression
corresponding to such $P$ is called a {\it subexpression} of the expression corresponding to $Q$.

Subwords of~$Q$ come with their embeddings into~$Q$, so two subwords
$P$ and~$P'$ involving reflections at different positions in~$Q$ are treated as distinct 
 even if the sequences of reflections in $P$ and~$P'$
coincide.  

The next two results may be found, e.g.,  in Section 1.7 of \cite{Hu} and 
and as  Theorem 3.3.1 in \cite{BB}, respectively. 

\begin{lemma}[Exchange Condition]\label{exchange-lemma}
Let $w=s_{i_1}\cdots s_{i_r}$ (not necessarily reduced), where each $s_{i_j}$ is a
simple reflection.  If $\ell(ws_i) < \ell(w)$ for a simple reflection
$s_i \in \Sigma $, then there exists an  index $j$ for which $ws_i  =
s_{i_1}\cdots \hat{s}_{i_j}\cdots s_{i_r}$.  In particular, $w$ has a reduced
expression ending in a simple reflection $s_i\in \Sigma $ if and only if 
$\ell(ws_i) < \ell(w)$.
\end{lemma}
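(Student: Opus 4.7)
The plan is to use the geometric (reflection) representation of $(W,\Sigma)$ on a real vector space spanned by simple roots $\{\alpha_s : s \in \Sigma\}$, on which each $s \in \Sigma$ acts as the reflection through the hyperplane $B(\alpha_s,\cdot) = 0$, where $B(\alpha_s,\alpha_t) = -\cos(\pi/m(s,t))$. The two classical facts from \cite{Hu} or \cite{BB} that I would invoke without reproof are: (i) $\ell(w)$ equals the number of positive roots sent to negative roots by $w$, and (ii) a simple reflection $s_t$ sends exactly one positive root to a negative root, namely $\alpha_t$ itself.

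By fact (i), the hypothesis $\ell(ws_i) < \ell(w)$ is equivalent to $w(\alpha_i) \in \Phi^-$, where $\alpha_i$ denotes the simple root corresponding to $s_i$. Writing $w = s_{i_1} \cdots s_{i_r}$ and applying the letters right-to-left, I would track the sequence of roots
$$\beta_0 = \alpha_i, \qquad \beta_k = s_{i_{r-k+1}}(\beta_{k-1}) \quad \text{for } k = 1, \dots, r,$$
so that $\beta_r = w(\alpha_i) \in \Phi^-$. Since $\beta_0 \in \Phi^+$, there is a least index $k$ at which $\beta_k \in \Phi^-$. At this $k$, the simple reflection $s_{i_{r-k+1}}$ flips the sign of $\beta_{k-1}$, so fact (ii) forces $\beta_{k-1} = \alpha_{i_{r-k+1}}$. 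Setting $j := r-k+1$ then reads $s_{i_{j+1}} \cdots s_{i_r}(\alpha_i) = \alpha_{i_j}$.

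The crux is to convert this root identity into a word identity. Using the standard fact that $u(\alpha_s) = \alpha_t$ for simple roots implies $u s u^{-1} = t$, the displayed equation yields $(s_{i_{j+1}} \cdots s_{i_r}) s_i = s_{i_j} (s_{i_{j+1}} \cdots s_{i_r})$. Left-multiplying by $s_{i_1} \cdots s_{i_{j-1}}$ gives $w = (s_{i_1} \cdots \hat{s}_{i_j} \cdots s_{i_r}) s_i$, and right-multiplying by $s_i$ (using $s_i^2 = e$) produces the required identity $ws_i = s_{i_1} \cdots \hat{s}_{i_j} \cdots s_{i_r}$. For the final ``in particular'' statement, the forward direction is trivial; conversely, given $\ell(ws_i) < \ell(w)$, apply the main claim to a reduced expression $w = s_{i_1} \cdots s_{i_{\ell(w)}}$ to obtain $ws_i = s_{i_1} \cdots \hat{s}_{i_j} \cdots s_{i_{\ell(w)}}$ as a product of $\ell(w) - 1$ simple reflections, then append $s_i$ to obtain a reduced expression of length $\ell(w)$ for $w$ that ends in $s_i$.

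The main obstacle, were one not citing background, would be establishing facts (i) and (ii), which require setting up faithfulness of the reflection representation, the positive/negative root dichotomy, and the bijection between inversions and left descents. Once these are in hand, the sign-tracking argument above is a short, mechanical observation driven entirely by the fact that a simple reflection can flip the sign of at most one positive root in a single step.
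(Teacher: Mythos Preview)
Your argument is correct and is precisely the standard root-system proof of the Exchange Condition. The paper does not supply its own proof of this lemma; it simply cites \cite{Hu} (Section~1.7) and \cite{BB} (Theorem~3.3.1) as background, and what you have written is essentially the argument found in those references.

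One small notational wobble: when you write ``$u(\alpha_s)=\alpha_t$ for simple roots implies $usu^{-1}=t$'', you are using $s,t$ both as reflections and as root labels. The intended statement is the general conjugation formula $u\,s_\alpha\,u^{-1}=s_{u(\alpha)}$ for any root~$\alpha$, specialized to $\alpha=\alpha_i$ and $u=s_{i_{j+1}}\cdots s_{i_r}$. With that read, your derivation of $(s_{i_{j+1}}\cdots s_{i_r})s_i=s_{i_j}(s_{i_{j+1}}\cdots s_{i_r})$ and the subsequent cancellation is exactly right.
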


\begin{thm}\label{braid-connectedness}
%
(1) 
Any two reduced expressions for the same element  $w$ of a finite Coxeter group   $W$ 
are connected by a series of (long and short) braid moves, where a short
braid move is $s_is_j \rightarrow s_js_i $ for $m(i,j)=2$ and a long
braid move is $s_is_js_i\cdots \rightarrow s_js_is_j\cdots $  with each  of these 
expressions alternating $s_i$ and $s_j$  consisting of  $m(i,j) > 2$ letters.
(2)
 Any expression for $w$ is connected to some  reduced expression for $w$ (and hence any reduced expression for $w$)
 by a series 
of long and short braid moves together with nil-moves $s_i^2 \rightarrow 1$.  
\end{thm}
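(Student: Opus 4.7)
The plan is to prove both assertions by strong induction on $d = \ell(w)$, treating the reduced-to-reduced statement first and then deriving the expression-to-reduced statement from it using the Exchange Condition (Lemma~\ref{exchange-lemma}).

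For the first assertion, fix two reduced expressions $s_{i_1}\cdots s_{i_d}$ and $s_{j_1}\cdots s_{j_d}$ for~$w$. If $s_{i_1} = s_{j_1}$, then left multiplication by this common simple reflection yields two reduced expressions for an element of length $d-1$, so the inductive hypothesis applies directly. Otherwise, set $s = s_{i_1}$, $t = s_{j_1}$, and $m = m(s,t)$. The crux is an iterated application of the Exchange Condition inside the rank-two dihedral parabolic $\langle s,t\rangle$, which shows that the hypothesis ``$w$ admits reduced expressions beginning with both $s$ and $t$'' forces $w$ to admit a reduced expression of the form $stst\cdots w'$ whose first $m$ letters form the full alternating braid word, and symmetrically one of the form $tsts\cdots w'$. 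These two reduced expressions differ from each other by a single long braid move applied to their first $m$ letters. By the inductive hypothesis (applied after stripping the common first letter), the original $s_{i_1}\cdots s_{i_d}$ is braid-equivalent to $stst\cdots w'$, and $s_{j_1}\cdots s_{j_d}$ is braid-equivalent to $tsts\cdots w'$; chaining these three equivalences finishes this case.

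For the second assertion, take a not-necessarily-reduced expression $s_{i_1}\cdots s_{i_r}$ for~$w$ with $r > \ell(w)$, and let $q$ be minimal such that $s_{i_1}\cdots s_{i_q}$ is not reduced. The Exchange Condition applied to the reduced expression $s_{i_1}\cdots s_{i_{q-1}}$ together with the simple reflection $s_{i_q}$ produces an index $p < q$ with
\[
s_{i_1}\cdots s_{i_{q-1}}\, s_{i_q} \;=\; s_{i_1}\cdots \widehat{s_{i_p}}\cdots s_{i_{q-1}}.
\]
Setting $u = s_{i_1}\cdots s_{i_{p-1}}$ and $v = s_{i_{p+1}}\cdots s_{i_{q-1}}$, this rearranges (after cancelling $u$ and using $s_{i_p}^2 = e$) to $v s_{i_q} = s_{i_p} v$, and both sides are reduced expressions of length $q-p$ for the same element (since $s_{i_p} v$ is a right factor of the reduced expression $s_{i_1}\cdots s_{i_{q-1}}$). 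By the first assertion, $v s_{i_q}$ and $s_{i_p} v$ are connected by a sequence of braid moves; carrying those moves out inside the original expression transforms $u s_{i_p} v s_{i_q} s_{i_{q+1}}\cdots s_{i_r}$ into $u s_{i_p} s_{i_p} v s_{i_{q+1}}\cdots s_{i_r}$, and a single nil-move then collapses the adjacent pair. Iterate until the expression has length $\ell(w)$.

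The main obstacle is the iterated Exchange Condition argument underlying the first assertion: namely, proving that reduced expressions starting with $s$ and with $t$ can be simultaneously refined to a single reduced expression whose initial braid segment grows all the way to length $m(s,t)$. Once that dihedral lifting property is established, the remainder of the proof consists of routine bookkeeping with subexpressions together with the inductive hypothesis, and the second assertion follows from the first essentially by the rearrangement $v s_{i_q} = s_{i_p} v$ above.
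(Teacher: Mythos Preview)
The paper does not supply a proof of this theorem; it is quoted as a standard background fact, with a pointer to \cite{Hu} and to Theorem~3.3.1 of~\cite{BB}.  Your argument is essentially the classical Matsumoto--Tits proof that appears in those references: strong induction on $\ell(w)$, reduction to the dihedral parabolic $\langle s,t\rangle$ for the first assertion, and for the second assertion the trick of using the first assertion to braid the segment $vs_{i_q}$ into $s_{i_p}v$ so as to create an adjacent square that can be cancelled.

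Two comments are worth recording.  First, the step you correctly flag as the ``main obstacle''---that if $w$ has reduced expressions beginning with both $s$ and $t$ then it has one beginning with the full alternating word of length $m(s,t)$---really is the crux, and in a self-contained write-up it should be proved rather than asserted.  One clean route is the parabolic factorisation $w = w_J\cdot{}^Jw$ with $J=\{s,t\}$: since ${}^Jw$ has no left descent in $J$, both $s$ and $t$ must be left descents of $w_J\in W_J$, forcing $w_J$ to be the longest element of the dihedral group $W_J$, which has the two alternating reduced expressions of length $m(s,t)$.  Second, note that the nil-move as literally printed in the statement, $s_i^2\to s_i$, does \emph{not} preserve the Coxeter-group element (it is the $0$-Hecke relation $x_i^2=x_i$, cf.\ the paragraph following Lemma~\ref{0-Hecke-to-Demazure}); the Coxeter version, which your argument actually uses when you ``collapse the adjacent pair'', is $s_i^2\to e$, and this is what Theorem~3.3.1 of~\cite{BB} states.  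With that reading your iteration terminates because $r-\ell(w)$ is necessarily even.
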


The following is an 
immediate consequence of the above results.

\begin{lemma}\label{l:braid}
Fix a reduced expression $s_{i_1} \ldots s_{i_d}$ for
$w \in W$ and a simple reflection
$s\in \Sigma $  such that $s_{i_1} \ldots s_{i_d}s$ is non-reduced.
Then there is a reduced expression $s_{j_1} \ldots s_{j_d}$ for~$w$ with
$s_{j_d} = s$ and a sequence of (long and short) braid moves that transforms $s_{i_1} \ldots s_{i_d}$ 
into $s_{j_1} \ldots s_{j_{d-1}}s$.
\end{lemma}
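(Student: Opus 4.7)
The plan is to assemble Lemma \ref{l:braid} directly from the two Coxeter-theoretic facts just recalled: the Exchange Condition (Lemma \ref{exchange-lemma}) and braid-connectedness of reduced expressions (Theorem \ref{braid-connectedness}). The argument breaks into three short steps.

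First, I would establish that $\ell(ws) < \ell(w)$. Since $s_{i_1}\cdots s_{i_d}$ is reduced, $\ell(w) = d$. Multiplication by a single simple reflection changes length by exactly $\pm 1$, so $\ell(ws) \in \{d-1, d+1\}$. If $\ell(ws) = d+1$, then the length-$(d+1)$ word $s_{i_1}\cdots s_{i_d}\,s$ would be reduced, contradicting the hypothesis. Hence $\ell(ws) = d-1 < \ell(w)$.

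Second, I would apply the Exchange Condition. It provides an index $j$ for which
$$
ws \;=\; s_{i_1}\cdots \widehat{s_{i_j}}\cdots s_{i_d},
$$
and right-multiplying by $s$ expresses $w$ as the length-$d$ word $s_{i_1}\cdots \widehat{s_{i_j}}\cdots s_{i_d}\cdot s$. Because its length equals $\ell(w)$, this expression is reduced; relabeling its letters yields a reduced expression $s_{j_1}\cdots s_{j_{d-1}}s$ for $w$ ending in $s$, so $s_{j_d} = s$ as required.

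Finally, I would invoke Theorem \ref{braid-connectedness}: any two reduced expressions for the same element of a finite Coxeter group are connected by a sequence of long and short braid moves. Applying this to the two reduced expressions $s_{i_1}\cdots s_{i_d}$ and $s_{j_1}\cdots s_{j_{d-1}}s$ for $w$ supplies the desired sequence of braid moves, completing the proof. There is essentially no substantive obstacle here; the only point requiring any care is the opening observation that non-reducedness of the length-$(d+1)$ word forces $\ell(ws) < \ell(w)$, so that the Exchange Condition genuinely applies.
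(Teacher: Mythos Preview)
Your proof is correct and follows exactly the approach the paper indicates: the paper states this lemma as ``an immediate consequence of the above results,'' meaning the Exchange Condition (Lemma~\ref{exchange-lemma}) and braid-connectedness (Theorem~\ref{braid-connectedness}), and you have simply spelled out those two steps in the natural way.
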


Another consequence is that multiplication of $w\in W$ by a simple reflection $s_i\in \Sigma $ always changes the length.  More precisely, 
$l (w s_i) = l(w) \pm 1$ and likewise $l(s_i w) = l(w) \pm 1$.

\begin{lemma} \label{longest_word}  Given any finite Coxeter group $W$,  
there is a unique element $w_0 \in W$ whose length is maximal.   This element is called the {\em longest element} of $W$.
\end{lemma}

\begin{defn}\label{Bruhat-def}
The {\em Bruhat order} is the partial order on elements of a Coxeter group $W$ with 
$u\le v$ if and only if there exist reduced expressions for $u$ and $v$ such that the 
reduced expression for $u$ is a subexpression of the reduced expression for $v$.  
\end{defn}

The longest element $w_0 \in W$ for $W$ a finite Coxeter group  is the unique maximal element  in 
the Bruhat order for $W$.  

Next we turn to the Demazure product.  This will play a critical role throughout this paper.

\begin{prop}\label{p:demazure}
There is a unique associative map $\delta: W \times W \to W$ 
 such that
$$%
\delta(w,s_i ) = \left\{
	\begin{array}{@{\,}ll}
	w s_i & \text{if } \ell(ws_i) > \ell(w)\\
	   w    & \hbox{if } \ell(ws_i) < \ell(w)
	\end{array}\right.
$$
for $w \in W$ and $s_i \in \Sigma$. 
\end{prop}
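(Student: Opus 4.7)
The plan is to reduce the proposition to a verification about braid moves, via Theorem~\ref{braid-connectedness}. I will prove uniqueness and existence separately.

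For uniqueness, suppose $\delta$ is associative and agrees with the stated rule on $W \times \Sigma$. Given $u \in W$ with a reduced expression $u = s_{j_1}\cdots s_{j_k}$, set $v_r := s_{j_1}\cdots s_{j_r}$, so $v_0 = e$ and $v_k = u$. Since each step $v_{r-1} \mapsto v_r$ lengthens, the rule gives $\delta(v_{r-1}, s_{j_r}) = v_r$. Associativity then yields
\[
\delta(w, v_r) \;=\; \delta(w, \delta(v_{r-1}, s_{j_r})) \;=\; \delta(\delta(w, v_{r-1}), s_{j_r}),
\]
so induction on $r$ forces $\delta(w, u)$ to be the iterated application of the single-reflection rule. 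The value $\delta(w, e)$ is pinned down by comparing $\delta(w, s_i) = \delta(\delta(w, e), s_i)$ for two distinct simple reflections, which (together with a separate small argument when $w=w_0$ or $|\Sigma|=1$) forces $\delta(w, e) = w$.

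For existence, given any word $Q = (i_1, \ldots, i_d)$ and $w \in W$, let $\phi(w, Q) \in W$ be the element obtained by applying the single-reflection rule to $s_{i_1}, \ldots, s_{i_d}$ successively, starting from $w$. I claim $\phi(w, Q)$ is invariant under each of the moves appearing in Theorem~\ref{braid-connectedness}. The nil-move $s_is_i \to s_i$ is immediate by idempotency of the rule: the intermediate element $x$ after the first application satisfies $\ell(xs_i) \le \ell(x)$, so the second application fixes $x$. The short braid move $s_is_j \to s_js_i$ (when $m(s_i, s_j) = 2$) falls to a four-way case analysis on the signs of $\ell(xs_i)-\ell(x)$ and $\ell(xs_j)-\ell(x)$. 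The long braid move reduces to the rank-two parabolic $W_{ij} := \langle s_i, s_j\rangle$ by writing $x = x'y$ with $x'$ the minimum-length coset representative of $xW_{ij}$ and $y \in W_{ij}$; in either order the alternating sequence of length $m(s_i, s_j)$ drives $y$ along the dihedral Bruhat chain to the longest element of $W_{ij}$, yielding the same final output.

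With invariance in hand, Theorem~\ref{braid-connectedness} implies that $\phi(w, Q)$ depends only on $w$ and on $\phi(e, Q)$. Defining $\delta(w, u) := \phi(w, Q_u)$ for any reduced word $Q_u$ for $u$ extends the rule to all of $W \times W$, and agrees with the single-reflection rule when $u \in \Sigma$. Associativity follows: for any $u, v \in W$ with reduced words $Q_u, Q_v$,
\[
\delta(\delta(w, u), v) \;=\; \phi(\phi(w, Q_u), Q_v) \;=\; \phi(w, Q_u Q_v),
\]
and the concatenation $Q_u Q_v$ satisfies $\phi(e, Q_u Q_v) = \delta(u, v) = \phi(e, Q_{\delta(u,v)})$, so the invariance established above gives $\phi(w, Q_u Q_v) = \phi(w, Q_{\delta(u,v)}) = \delta(w, \delta(u, v))$. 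The main obstacle is the long-braid verification inside the rank-two parabolic; it is elementary but requires careful bookkeeping by cases indexed by the position of $y$ in $W_{ij}$.
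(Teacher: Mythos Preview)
Your argument is correct and substantially more detailed than the paper's, which consists solely of the citation ``See \cite[Section~3]{subword}.'' The route you take---define $\phi(w,Q)$ by iterating the single-reflection rule, verify invariance under braid moves and the modified nil-move $s_is_i\to s_i$, then appeal to Theorem~\ref{braid-connectedness} to conclude well-definedness and associativity---is the standard one and is presumably what the cited reference does. What you gain by writing it out is a self-contained account; what the paper gains by citing is brevity.

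Two small remarks. First, your parenthetical about a ``separate small argument when $|\Sigma|=1$'' cannot be completed: in rank~$1$ the proposition as literally stated admits other solutions, since for $W=\{e,s\}$ both the projection $\delta(w,u)=u$ and the constant map $\delta\equiv s$ are associative and satisfy the required rule on $W\times\Sigma$. The intended reading implicitly takes $e$ to be a two-sided identity (or restricts to $|\Sigma|\ge 2$), so this is a defect of the statement rather than of your proof. Second, the long-braid step relies on the fact that for the parabolic decomposition $x=x'y$ with $x'$ the minimal coset representative in $xW_{ij}$, the ascent/descent of $x$ under $s\in\{s_i,s_j\}$ is governed entirely by~$y$; this is standard (it follows from $\ell(x)=\ell(x')+\ell(y)$ and the characterization of minimal coset representatives), but it is the hinge of that case and deserves to be stated explicitly.
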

\begin{proof}
See \cite[Section~3]{subword}.
\end{proof}

\commenth{I added the next  two sentences, because I was worried about readers getting unnecessarily  bogged down trying to think how to calculation 
$\delta (w_1,w_2)$  in the case when neither $w_1$ nor $w_2$ is in $\Sigma $.  The proofs e.g.\ in Section 4 are tough enough as it is.}

To understand our proofs, it will suffice to know how to calculate $\delta $ on $W\times \Sigma $ and on $\Sigma \times W$.    One may verify from the above that
$\delta (w,e) = w = \delta (e,w)$  for each $w\in W$ and that  
$$\delta (s_i,w) = \left\{
	\begin{array}{@{\,}ll}
	s_i w  & \text{if } \ell(s_i w) > \ell(w)\\
	   w    & \hbox{if } \ell(s_i w) < \ell(w)
	\end{array}\right.
$$ 
for $w\in W$ and $s_i\in \Sigma $.

\begin{defn}\label{d:demazure}
The map $\delta$ in Proposition~\ref{p:demazure} is the \emph{Demazure
product} on~$W$.  Using associativity,  extend it  to a map $\delta : W^d \to W$ for all positive integers $d$.  Given a word $Q =  (i_1,\dots , i_d)$ 
in the sense of Definition \ref{d:subword}, define $\delta(Q) = \delta(s_{i_1}, \ldots , s_{i_d})$.   The key relations in the case of the symmetric group are $\delta( s_i ,s_{i+1} ,s_ i) = \delta(s_{i+1} ,s_i ,s_{i+1})$, $\delta(s_i, s_j) = \delta(s_j, s_i)$ when $|i - j| > 1$, and $\delta(s_i,s_i) = \delta(s_i)$.  
\end{defn}

The following alternative description for the Demazure product 
 will justify  the equivalence of  this map $\delta $ to 
the standard product for the unsigned 0-Hecke algebra associated to $W$, 
with this equivalence  using the bijective 
correspondence described shortly  between generators of $W$ and generators of  its (unsigned)  0-Hecke algebra.

\begin{lemma}\label{0-Hecke-to-Demazure}
Definition ~\ref{d:demazure}  is equivalent to  the following set of 
requirements  for an associative map $\delta $:
\begin{enumerate}
\item 
$\delta (w,s_i) = ws_i$ if $l(ws_i) > l(w)$
\item
$\delta (s_i,s_i ) =  s_i$ for $s_i \in \Sigma $.
\item
$\delta (s_i,s_j, s_i \dots ) = \delta (s_j,s_i,s_j,\dots ) $ where each side is an alternation of 
length $m(i,j)$ of the simple reflections $s_i$ and $s_j$.
\end{enumerate}
\end{lemma}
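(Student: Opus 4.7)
The plan is to prove both implications of the equivalence. The forward direction requires showing that the Demazure product $\delta$ of Proposition~\ref{p:demazure} satisfies conditions (1), (2), and (3). Condition (1) is immediate from the defining formula. For condition (2), specializing to $w = s_i$ gives $\ell(s_i \cdot s_i) = 0 < 1 = \ell(s_i)$, so Proposition~\ref{p:demazure} yields $\delta(s_i, s_i) = s_i$. For condition (3), observe that the alternating word $s_i s_j s_i \cdots$ of length $m(i,j)$ is a reduced expression in $W$. Consequently, at each stage of the iterated Demazure product the length strictly increases, so repeated application of Proposition~\ref{p:demazure} gives $\delta(s_i, s_j, s_i, \ldots) = s_i s_j s_i \cdots$ as a plain group product in $W$. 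The analogous calculation yields $\delta(s_j, s_i, s_j, \ldots) = s_j s_i s_j \cdots$, and these two group products coincide by the long braid relation in $W$.

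For the backward direction, assume $\delta : W \times W \to W$ is associative and satisfies (1), (2), and (3). Condition (1) already handles the case $\ell(ws_i) > \ell(w)$ of Proposition~\ref{p:demazure}, so it remains to show $\delta(w, s_i) = w$ whenever $\ell(ws_i) < \ell(w)$. The key tool is the Exchange Condition (Lemma~\ref{exchange-lemma}), which provides a reduced expression $s_{j_1} \cdots s_{j_{d-1}} s_i$ for $w$ ending in~$s_i$. Iterated application of (1) along this reduced expression yields $\delta(s_{j_1}, \ldots, s_{j_{d-1}}, s_i) = s_{j_1} \cdots s_{j_{d-1}} s_i = w$. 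Then associativity together with condition (2) gives $\delta(w, s_i) = \delta(\delta(s_{j_1}, \ldots, s_{j_{d-1}}, s_i), s_i) = \delta(s_{j_1}, \ldots, s_{j_{d-1}}, \delta(s_i, s_i)) = \delta(s_{j_1}, \ldots, s_{j_{d-1}}, s_i) = w$, as required.

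The main delicate point is in the backward direction: one must invoke the Exchange Condition to guarantee the existence of a reduced expression for $w$ ending in the specified simple reflection $s_i$, since the argument proceeds by absorbing the trailing $s_i \cdot s_i$ via condition (2). It is worth noting that condition (3) is not explicitly needed in the backward implication; given that $\delta$ is a well-defined associative map on $W \times W$ satisfying (1) and (2), relation (3) actually follows automatically, because both alternating words of length $m(i,j)$ are reduced expressions for the same element of $W$ and condition (1) evaluates each to that common group product. Condition (3) is nonetheless included in the lemma statement to mirror the standard relations in the presentation of the unsigned $0$-Hecke algebra discussed immediately after the lemma.
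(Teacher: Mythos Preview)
Your proof is correct and follows essentially the same route as the paper: both directions hinge on the Exchange Condition (Lemma~\ref{exchange-lemma}) to produce a reduced expression for $w$ ending in~$s_i$, then absorb the trailing $s_i s_i$ via condition~(2). Your argument is in fact slightly more streamlined than the paper's: the paper additionally invokes Theorem~\ref{braid-connectedness} to connect expressions by braid and nil-moves, whereas you observe that once $\delta$ is assumed to be a well-defined associative map on $W\times W$, one may simply substitute $w=\delta(s_{j_1},\ldots,s_{j_{d-1}},s_i)$ directly and apply associativity, bypassing any need to track moves between expressions. Your closing remark that condition~(3) is not actually needed for the backward implication---and is present only to match the $0$-Hecke algebra presentation---is a correct and worthwhile observation that the paper does not make explicit.
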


\begin{proof}
Each of these three  conditions follows easily from special cases of the conditions given in Definition ~\ref{d:demazure}.  Conversely, we obtain the condition $\delta (w,s_i) = w$ for $l(ws_i ) < l(w)$ from Definition ~\ref{d:demazure} from these three conditions as follows.  We use  the fact that $w$ must  have a reduced expression with $s_i$ as its rightmost letter to have $l(ws_i) < l(w)$ (see Lemma ~\ref{exchange-lemma})  together with the fact (recalled in Theorem  ~\ref{braid-connectedness}) that any reduced expression for $w$ may be obtained from any expression for $w$ via a series of (long and short) braid moves and nil-moves (with each nil-move giving rise to a modified nil-move
 $\delta (s_i, s_i)  \rightarrow s_i$  when using the Demazure product). 
\end{proof}

At times it will be convenient to refer to  
the type (2) relations $\delta (s_i,s_i) \rightarrow s_i$ above  as modified nil moves and the type (3) relations
$\delta (s_i,s_j,s_i,\dots ) \rightarrow (s_j,s_i,s_j, \dots )$ above where each side is an alternation of length $m(i,j)$  as braid moves.  

We work 
exclusively in terms of the Demazure product, but  e.g.\ \cite{He} instead used the language of the 0-Hecke algebra.  We briefly discuss that alternative 
perspective now, in case it may help some readers.  

A finite Coxeter system  $(W,\Sigma )$ gives rise to the Demazure product $(W, \delta)$ which in turn gives rise to a ring, called the (unsigned)  0-Hecke algebra.   Abstracting a bit, let $G$ be a set, $e \in G$ an element, and let  $\phi : G \times G \to G$ be an associative function, so that $\phi(e,g) = g = \phi(g,e)$ for all $g \in G$.   Let $R$ be a ring.   Define a ring $R[G,\phi]$  to be additively  the left free $R$-module with basis $G$ and give it the multiplication 
$$
\left(\sum r_ig_i\right)\left(\sum r_j'g_j' \right) = \sum r_ir_j' \phi(g_i,g_j')
$$
The ring $\F_2[W,  \delta]$ is the  $0$-Hecke algebra over $\F_2$ (henceforth just called the 0-Hecke algebra).   
\commenth{I shortened and simplified the discussion of the 0-Hecke algebra, trying to make it a bit more informal and easier to read, 
both because we won't need it at all in this paper and also because I could not understand much of what had been written.}
The generator $s_i\in \Sigma $ from a Coxeter system $(W,\Sigma )$ naturally corresponds to the generator we denote by $x_i$  in 
the associated  0-Hecke algebra.
These generators 
 satisfy two types of relations:
 \begin{enumerate}
 \item
 $x_i^2 = x_i$ for each $s_i \in \Sigma $ 
  \item
   $x_i x_j x_i \dots = x_j x_i x_j \dots$ for each $x_i,x_j\in \Sigma $ where both sides 
   have degree $m(i,j)$ 
\end{enumerate}
We call  $x_i^2 \rightarrow x_i$ a  {\it modified nil-move} and call  $x_ix_j x_i \dots \rightarrow x_j x_i x_j\dots $ a  {\it braid move}.

 We note that this 0-Hecke algebra over $\F_2$ is exactly the specialization of the usual Hecke algebra over the field of two elements where the usual parameter $q$ is set to 0; working over $\F_2$ allows us to  ignore signs.

One may use the product in the 0-Hecke algebra to determine for an 
expression $x_{i_1}(t_1)x_{i_2}(t_2)\cdots x_{i_d}(t_d) = p $ which $w\in W$ has $p\in U(w)$  as follows.  
Take the subexpression of $x_{i_1}(t_1)x_{i_2}(t_2)\cdots x_{i_d}(t_d)$ comprised of only those terms $x_{i_r}(t_r)$ such that the parameter
$t_r$ is positive and then suppress  the 
parameters to get an expression $x_{i_{j_1}}\cdots x_{i_{j_k}}$ in the 0-Hecke algebra.  This element of the 0-Hecke algebra is naturally indexed by the Coxeter group element $w$, as one may calculate as follows.  
 One may apply braid moves and modified nil-moves to obtain a reduced expression in 
these generators, then replace each $x_i$ by $s_i$ to get a reduced expression for the desired  $w$, thereby determining which $w$ has $p\in U(w)$.   
An equivalent  way to describe how to obtain  this $w\in W$ is as follows.  
Take   the subexpression  of $s_{i_1}s_{i_2}\cdots s_{i_d}$ consisting
of those letters $s_{i_r}$ such that $x_{i_r}$ has a strictly positive  parameter $t_r$, then  calculate the Demazure product of this subexpression
to get $w$.    This  0-Hecke algebra perspective regarding  strata  is used extensively in \cite{He}.
 
 \begin{remark}\label{Demazure-connected}
 Theorem ~\ref{braid-connectedness} may be extended  in a straightforward manner to an analogous statement regarding expressions which have the same Demazure product as each other, by simply replacing nil-moves $s_i s_i \rightarrow e$ with modified nil-moves $(s_i,s_i)\rightarrow (s_i)$.
The main points are  (a)  that any two reduced expressions having the same Demazure product as each other are  connected by a series of braid moves, and (b) that any expression may be transformed to any reduced expression which has the same Demazure product as it by a series of braid moves and modified nil-moves.
 \end{remark}

The next notion 
made an early appearance in \cite[Lemma~3.5.2]{subword}
and was  formally defined (and named) in \cite{He}, where it played 
a key role  in several    proofs. 

\begin{defn}\label{d:deletion-pair}
The letters $s_{i_j}$ and~$s_{i_k}$ in an expression  
$s_{i_1}\ldots s_{i_d}
$ constitute a \emph{deletion pair} if $j < k$ with 
$s_{i_j}\ldots s_{i_{k-1}}$ and
$s_{i_{j+1}}\ldots s_{i_k}$ both reduced expressions for the same Coxeter group element
while $s_{i_j} \ldots s_{i_k}$ is a non-reduced expression.   
\end{defn}

For example, in the symmetric group, $s_{3}s_{1}s_{2}s_{1}s_{2}$ 
has a deletion pair $ \{ s_{i_2},s_{i_5} \} $.  It is proven in \cite{He} that the condition that these two reduced expression are for the same Coxeter group element actually follows from the other parts of the definition.

 \begin{prop}\label{p:braid-deletion}
 Given a deletion pair $\{s_{i_j} , s_{i_k} \} $  in an expression $s_{i_1}\ldots s_{i_d}$, there is 
 a series of braid moves that may be applied to $s_{i_j}\ldots s_{i_{k-1}}$ 
 yielding another reduced expression 
 $s_{i_j'} \ldots s_{i_{k-1}'}$ such that $i_{k-1}'  = i_k$.
 \end{prop}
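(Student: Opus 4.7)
The plan is to derive this proposition as an essentially immediate corollary of Lemma~\ref{l:braid}. First, I would let $w$ denote the common Coxeter group element represented by the two reduced expressions $s_{i_j}\ldots s_{i_{k-1}}$ and $s_{i_{j+1}}\ldots s_{i_k}$ (whose existence is built into Definition~\ref{d:deletion-pair}), and set $s := s_{i_k}$.

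Next, I would verify the hypothesis needed to invoke Lemma~\ref{l:braid}: namely, that appending $s$ to the reduced expression $s_{i_j}\ldots s_{i_{k-1}}$ yields a non-reduced expression. But this concatenated expression is exactly $s_{i_j}\ldots s_{i_k}$, which is non-reduced as part of the definition of a deletion pair. Equivalently, since multiplication of $w$ by a simple reflection changes $\ell(w)$ by exactly $\pm 1$, the non-reducedness of $s_{i_j}\ldots s_{i_k}$ forces $\ell(ws) < \ell(w)$, which is the form of the hypothesis stated in Lemma~\ref{l:braid} (via the Exchange Condition).

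With the hypothesis in place, applying Lemma~\ref{l:braid} to the reduced expression $s_{i_j}\ldots s_{i_{k-1}}$ for $w$ and to the simple reflection $s = s_{i_k}$ directly produces another reduced expression $s_{i_{j}'}\ldots s_{i_{k-1}'}$ for $w$ whose final letter satisfies $s_{i_{k-1}'} = s_{i_k}$, together with a sequence of (long and short) braid moves transforming the original reduced expression into this new one. This is precisely the conclusion of the proposition.

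The main obstacle is essentially nonexistent: the proposition is a direct unpacking of Lemma~\ref{l:braid} once one matches hypotheses. The only point requiring any care is checking that the non-reducedness condition from Definition~\ref{d:deletion-pair} exactly feeds into the hypothesis of Lemma~\ref{l:braid}; once that identification is made, no further argument is needed, and in particular no nil-moves are introduced, since the sequence of moves stays within reduced expressions throughout.
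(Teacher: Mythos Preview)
Your proposal is correct and matches the paper's own proof, which simply records that the statement follows from the Exchange Condition together with Theorem~\ref{braid-connectedness} and Lemma~\ref{l:braid}. You have in fact streamlined this by observing that Lemma~\ref{l:braid} alone already packages exactly what is needed once the hypotheses from Definition~\ref{d:deletion-pair} are matched up.
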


\begin{proof}
 This is a consequence of the exchange axiom for Coxeter groups along
 with Theorem ~\ref{braid-connectedness} and Lemma 
  \ref{l:braid}.
\end{proof}

\begin{lemma}\label{l:deletion-pair}
Under the conditions of Definition~\ref{d:deletion-pair}, the two
words $s_{i_j}\ldots s_{i_{k-1}}$ and $s_{i_{j+1}}\ldots s_{i_k}$
are reduced expressions for the same Coxeter group element
$\delta(s_{i_j}, \ldots , s_{i_k})$.
\end{lemma}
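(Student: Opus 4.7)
The plan is to identify both $u_1 := s_{i_j}\cdots s_{i_{k-1}}$ and $u_2 := s_{i_{j+1}}\cdots s_{i_k}$, viewed as Coxeter group elements, with the common value of the Demazure product $\delta(s_{i_j},\ldots,s_{i_k})$ by computing this product in two different ways using associativity.

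First I would record the two length inequalities $\ell(u_1 s_{i_k}) < \ell(u_1)$ and $\ell(s_{i_j} u_2) < \ell(u_2)$. Since both defining words are reduced, $\ell(u_1) = \ell(u_2) = k-j$. The products $u_1 s_{i_k}$ and $s_{i_j} u_2$ both equal the expression $s_{i_j}\cdots s_{i_k}$, which by hypothesis is non-reduced, so each has length at most $k-j-1$. Because multiplication by a simple reflection changes length by exactly $\pm 1$, both inequalities follow.

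Next, using associativity of $\delta$ (Proposition~\ref{p:demazure}) together with the fact that $\delta$ agrees with the group product on a reduced word, I would compute
\[
\delta(s_{i_j},\ldots,s_{i_k}) \,=\, \delta\bigl(\delta(s_{i_j},\ldots,s_{i_{k-1}}),\, s_{i_k}\bigr) \,=\, \delta(u_1, s_{i_k}) \,=\, u_1,
\]
where the final step invokes Proposition~\ref{p:demazure} and the inequality $\ell(u_1 s_{i_k}) < \ell(u_1)$. Dually,
\[
\delta(s_{i_j},\ldots,s_{i_k}) \,=\, \delta\bigl(s_{i_j},\, \delta(s_{i_{j+1}},\ldots,s_{i_k})\bigr) \,=\, \delta(s_{i_j}, u_2) \,=\, u_2,
\]
so $u_1 = u_2 = \delta(s_{i_j},\ldots,s_{i_k})$, as required.

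The main technical subtlety is that Proposition~\ref{p:demazure} is phrased in terms of right-multiplication by a simple reflection, whereas the step $\delta(s_{i_j}, u_2) = u_2$ requires the symmetric left-multiplication rule. I would derive this via the $0$-Hecke algebra realization of Lemma~\ref{0-Hecke-to-Demazure}: if $\ell(s_i w) < \ell(w)$, then the Exchange Condition (Lemma~\ref{exchange-lemma}) supplies a reduced expression $w = s_i s_{j_1}\cdots s_{j_m}$, and in $\F_2[W,\delta]$ one computes $x_{s_i} x_w = x_{s_i}^{2}\, x_{s_{j_1}}\cdots x_{s_{j_m}} = x_{s_i} x_{s_{j_1}}\cdots x_{s_{j_m}} = x_w$, which translates back to $\delta(s_i, w) = w$.
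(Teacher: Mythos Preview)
Your argument is correct. The paper's own proof is merely a citation---``This follows from \cite[Lemma~3.5]{subword} or from \cite[Lemma~5.5]{He}''---so your approach is genuinely different in that you supply a self-contained derivation using only machinery already set up in Section~\ref{s:coxeter}. In fact your computation shows slightly more than the lemma as stated: you obtain $u_1 = u_2$ from just the reducedness of the two subwords and the non-reducedness of $s_{i_j}\cdots s_{i_k}$, without invoking the clause of Definition~\ref{d:deletion-pair} asserting they represent the same element. This is exactly the strengthening the paper attributes to \cite{He} in the remark following Definition~\ref{d:deletion-pair}. Your handling of the left-multiplication rule $\delta(s_i,w)=w$ when $\ell(s_iw)<\ell(w)$ via the $0$-Hecke algebra and the Exchange Condition is clean and fills the only gap left by the right-sided formulation of Proposition~\ref{p:demazure}.
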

\begin{proof}
This follows from \cite[Lemma~3.5]{subword} or from \cite[Lemma~5.5]{He}.
\end{proof}

The Demazure product is  quite useful for  understanding  relationships between
reduced subwords for a fixed element~$w$ inside of a given ambient
word~$Q$ \cite{subword}.  These relationships are expressed
topologically using the subword complexes discussed next, complexes 
which were first introduced in 
\cite[Definition~1.8.1]{grobGeom} and \cite[Definition~2.1]{subword}.

\begin{defn}\label{d:subword}
A word $P$ \emph{contains} $w \in W$ if some subsequence of $P$ is a reduced expression for $w$.

The \emph{subword complex} $\Delta(Q,w)$ for a word $Q$ and an element
$w \in W$ is the simplicial complex whose $k$-simplicies are given by $(k+1)$-letter subwords  $R$ of $Q$ so that  $P = Q \minus R$ 
contains~$w$.
\end{defn}

The facets of the
subword complex $\Delta(Q,w)$ are given by  those words $R = Q \minus P$ where $P$ is   a reduced
word  for~$w$. 

\begin{thm}[{\cite[Theorems~2.5 and~3.7 and Corollary~3.8]{subword}}]\label{t:ball}
The subword complex $\Delta(Q,w)$ is shellable and homeomorphic to
either a ball or a sphere.  It is homeomorphic to a ball if and only if  
$\delta(Q) \ne  w$.  A face $Q \minus P$ lies in the boundary
of $\Delta(Q,w)$ if and only if $P$ satisfies $\delta(P)
\neq w$.
\end{thm}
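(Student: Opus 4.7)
The plan is to proceed by induction on $d = |Q|$, with small base cases handled directly (e.g.\ $d = 0$ gives $\Delta(\emptyset, e) = \{\emptyset\} = S^{-1}$, the empty sphere, consistent with $\delta(\emptyset) = e$). For the inductive step, let $v$ be the vertex of $\Delta(Q, w)$ corresponding to the last position of $Q$, write $Q = (Q'', s)$ with $s = s_{i_d}$, and observe from Definition~\ref{d:subword} that $\mathrm{link}_\Delta(v) \cong \Delta(Q'', w)$, since removing position $d$ from the defining subword leaves exactly the constraint for $\Delta(Q'', w)$.  The analysis splits according to whether $\ell(ws) > \ell(w)$ or $\ell(ws) < \ell(w)$, which by Lemma~\ref{exchange-lemma} is equivalent to whether no or some reduced expression for $w$ ends in $s$.

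In the \emph{cone case} $\ell(ws) > \ell(w)$, no reduced subword of $Q$ representing $w$ can use position $d$, so every facet of $\Delta(Q,w)$ contains $v$ and $\Delta(Q, w) = v * \Delta(Q'', w)$ is a cone.  The inductive hypothesis gives shellability of the base, hence shellability of the cone, and a cone on any ball or sphere is a ball.  The recursion $\delta(Q) = \delta(Q'') \cdot_\delta s$, combined with $\ell(ws) > \ell(w)$ and the fact that $\delta(Q'') \geq w$ (which must hold when $\Delta(Q, w)$ is nonempty), forces $\delta(Q) \neq w$, consistent with the ``ball'' conclusion.

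In the \emph{decomposition case} $\ell(ws) < \ell(w)$, a facet not containing $v$ must use position $d$, whose letter $s$ is then the last letter of a reduced subword for $w$; the preceding letters form a reduced subword of $Q''$ for $ws$, giving $\mathrm{del}_\Delta(v) \cong \Delta(Q'', ws)$.  By induction both $\Delta(Q'', w)$ and $\Delta(Q'', ws)$ are shellable balls or spheres, and concatenating a shelling of the deletion with the closed-star shelling over the link yields a shelling of $\Delta(Q, w)$; the required intersection property follows from the inductive boundary characterization of $\Delta(Q'', w)$.  Topologically, $\Delta(Q, w)$ is the union of the ball $v * \mathrm{link}_\Delta(v)$ and the ball-or-sphere $\mathrm{del}_\Delta(v)$, glued along the common subcomplex $\mathrm{link}_\Delta(v)$; this gives a sphere exactly when $\mathrm{link}_\Delta(v) = \partial \, \mathrm{del}_\Delta(v)$ and $\mathrm{del}_\Delta(v)$ is a ball, and a ball otherwise.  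The inductive boundary characterization translates these topological conditions into the dichotomy $\delta(Q) = w$ versus $\delta(Q) \neq w$.

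I verify the boundary characterization $Q \minus P \in \partial \Delta(Q, w) \iff \delta(P) \neq w$ in parallel with the same case split.  In the cone case, $\partial (v * \Delta(Q'', w)) = \Delta(Q'', w) \cup v * \partial \Delta(Q'', w)$, so either $P \subseteq Q''$ (boundary controlled inductively) or $P$ uses the terminal $s$ and its remaining letters lie in $\partial \Delta(Q'', w)$; the identity $\delta(P) = \delta(P \minus \{s\}) \cdot_\delta s$ registers the correct behavior as $s$ is added or removed.  The main obstacle lies in the decomposition case: matching the topological boundary of the glued complex with the condition $\delta(P) \neq w$ requires careful bookkeeping with deletion pairs (Lemma~\ref{l:deletion-pair}) to detect precisely when a reduced subword for $w$ can be promoted, via braid moves and deletion-pair substitutions inside $Q$, to a subword representing a strictly larger Bruhat element — which is the combinatorial content of $\delta(P) \neq w$.
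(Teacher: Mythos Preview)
The paper does not supply its own proof of this theorem: it is quoted verbatim as a citation to Knutson--Miller \cite[Theorems~2.5 and~3.7 and Corollary~3.8]{subword}, so there is no in-paper argument to compare against.  That said, your inductive strategy---peeling off the last position, identifying the link and deletion with the smaller subword complexes $\Delta(Q'',w)$ and $\Delta(Q'',ws)$, and splitting on whether $\ell(ws)>\ell(w)$---is exactly the vertex-decomposability argument used in the cited source, so you are on the right track.

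Where your sketch is genuinely incomplete is precisely where you flag it.  In the decomposition case you assert that concatenating a shelling of the deletion with a shelling of the closed star gives a shelling of $\Delta(Q,w)$, but the verification that each new facet meets the union of the earlier ones in a pure codimension-one subcomplex is not carried out; you defer it to ``the inductive boundary characterization,'' which is circular since that characterization is part of what you are proving.  Likewise, your final paragraph on the boundary in the decomposition case trails off into ``careful bookkeeping with deletion pairs'' without actually doing it: you need to show, for $P$ containing the last position, that $\delta(P)=w$ iff $\delta(P\setminus\{d\})=ws$ and the corresponding face is interior to $\Delta(Q'',ws)$, and for $P$ not containing the last position, that interiority in $\Delta(Q,w)$ matches interiority in $\Delta(Q'',w)$ \emph{together with} lying in $\partial\Delta(Q'',ws)$ along the gluing.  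These are the steps that carry the real content, and they are not yet written down.
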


\begin{remark} \label{subword_for_Demazure_product}
Any word $Q$ contains a subword that is a reduced word  for the Demazure product $\delta(Q) \in W$.   Indeed if $Q = (i_1, \dots , i_d)$, such a subword may be obtained by omitting all $i_j$ with the property that $\delta(i_1, \dots, i_j) = \delta(i_1, \dots, i_{j-1})$.   See also Lemma 3.4 of \cite{subword}.  

We will make extensive use of the analogous fact that the subword obtained by omitting each $i_j$ such that $\delta (i_j,\dots ,i_d) = \delta (i_{j+1},\dots ,i_d)$ is a reduced word for $\delta (Q)$ and in fact  is the rightmost subword of $(i_1,\dots ,i_d)$ that is a reduced word for $\delta (i_1,\dots ,i_d)$.  
\end{remark}

\subsection{Total positivity}\label{positivity-background}  \commentjd{I did a major revision of this section.   I added the Coxeter system in the nontype A case, the definition of $U_{>0}$, Lusztig's decomposition of $U$.  We were inconsistent on whether the unipotent radical was $U$ or $U_+$; I settled on $U$.   Revisions on the last 2.5 pages of this subsection were more extensive; you may want to look at them more carefully.}

Let $U$ be the unipotent radical of a Borel subgroup $B$  of a reductive, connected algebraic group $G$.  For example, let  $G = GL_n(\R)$  (i.e. the type A case) and let $B$ be the upper triangular matrices in $GL_n(\R)$, in which case $U = U^n$ is  the ``unipotent" group of upper triangular matrices with 1's on the diagonal.  

In this type A case, a matrix in $M_n(\RR)$ is {\em totally nonnegative} if all of its matrix minors are nonnegative real numbers; it is {\em totally positive} if all of its minors are strictly positive.    We are   interested in the space of totally nonnegative, real  matrices which are upper triangular with ones on the  main diagonal, stratified by specifying which minors are positive and which are zero.    For $t \in \R$ and $j \in \{ 1,  \dots, n-1\}$, define $x_j(t) = I_n + tE_{j,j+1}(n) \in U^n$ with $E_{j,j+1}(n)$ the $n \times n$ matrix which is all zeroes except for a 1 in row $j$ and column $j+1$.    If $t \geq 0$, then $x_j(t)$ is totally nonnegative.   Let $U^n_{\geq 0}$ be the monoid consisting of finite products of matrices $x_j(t)$ with $t$ nonnegative; $U^n_{\geq 0}$ is the intersection of $U^n$ with the set of  totally nonnegative matrices.

Let $\R_{\geq 0}^d$ denote the orthant of $\R^d$ consisting of all points with nonnegative coordinates.    Given any (not necessarily reduced) word $Q = (i_1,\dots ,i_d)$ in the Coxeter system $(W,\Sigma) = (S_n, \{s_1,\dots, s_{n-1}\}$), 
Lusztig proved a number of important properties of the  continuous map
$$
f_Q : \RR^d_{\geq 0} \to U^n_{\geq 0} \subset U^n \subset M_n(\R)
$$
given by $f_Q(t_1, \ldots, t_d) = x_{i_1}(t_1) \dots x_{i_d}(t_d)$.

Next we turn to the general case.
One can associate a Coxeter system $(W, \Sigma)$  
to a reductive algebraic group $G$.  (Here $W = N(T)/T$ where $T$ is a maximal torus in $G$ and $N(T)$ is its normalizer, so $W$ is the Weyl group of $G$.)  $W$ is always generated by a set  $\Sigma = \{s_1, \dots, s_k\}$ of simple reflections.  
 As above, a word $Q = (i_1, \dots, i_d)$ determines the element $w = s_{i_1} \dots s_{i_d} \in W$.

Given a reductive, connected algebraic group $G$ defined and split over $\R$ and the unipotent radical $U$ of a Borel subgroup of $G$, Lusztig makes use of  an extra structure -- a pinning --  that is somewhat analogous to choosing a basis for $\R^n$.  The data of a pinning includes  for each $j \in \{1, \dots, k\}$, a homomorphism $x_j: \R  \to U$.   Lusztig  defines the monoid $U_{\geq 0} \subset U$ to be the monoid generated by the set of elements  $x_j(t)$ for all $j\in \{ 1,\dots ,k\} $ and all $t \geq 0$.

\begin{defn}\label{pinning-def} 
Given a split reductive connected algebraic group $G$, a {\it pinning} for $G$ consists of a $K$-split maximal torus $T$, a pair $B^+,B^-$ of opposite Borel subgroups containing $T$, and isomorphisms $x_i$ and $y_i$ for each $i\in I$ for $I$ the set of simple reflections, where $x_i$ is an isomorphism from $\RR $  to $U_i^+$ and $y_i$ is an isomorphism from $\RR $ to $U_i^-$ (where $U_i^+$ and $U_i^-$ are the simple root subgroups of $U^+$ and $U^-$ indexed by $i$) and where  $x_i$ and $y_i$ together give rise to a homomorphism from $SL_2(\RR )$ to $G$.  There is additional  structure in the full  definition  of a pinning, as described e.g.\ in Section 1 of \cite{Lu}, but this abbreviated version suffices for our purposes.
\end{defn}

Analogous to the type A case, Lusztig  defines, for each word $Q = (i_1, \dots, i_d)$, a continuous map
$$
f_Q : \RR^d_{\geq 0} \to U_{\geq 0}
$$
given by $f_Q(t_1, \ldots, t_d) = x_{i_1}(t_1) \dots x_{i_d}(t_d)$.

We will be especially focused on the structure of each {\em fiber} of the map $f_{(i_1,\dots ,i_d)}$, by which  we mean the set  $ f^{-1}_{(i_1,\dots ,i_d)}(p)
\cap \RR_{\ge 0}^d$ for each  $p\in U_{\ge 0}$. 
Now we recall results of Lusztig  
from \cite{Lu} that 
will prove vital to our work. 

\begin{prop}\label{Lu-0th-prop}[Lus94, Proposition 2.6]
Let $s_i, s_{i'}\in \Sigma$ be distinct simple reflections in $W$.  
Let $m(i,i') \ge 2$ be the order of $s_is_{i'}$ in $W$.
Let $a_1,a_2,\dots ,a_m$ be a sequence in $\R_{>0}$.  Then there exists a unique sequence
$a_1',a_2',\dots ,a_m'$ in $\R_{>0}$ such that 
$$x_i(a_1)x_{i'}(a_2)x_i(a_3)x_{i'}(a_4)\cdots = x_{i'}(a_1')x_i(a_2')x_{i'}(a_3')x_i(a_4)\cdots $$
with each side  of the equation consisting  of a product of 
$m(i,i') $ factors with alternating subscripts $i$ and $i'$.
\end{prop}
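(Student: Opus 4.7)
The plan is to reduce to a rank-two computation in the dihedral subgroup $\langle s_i, s_{i'}\rangle$ and then use explicit commutation formulas between the root subgroups. Every factor on either side of the desired identity lies in the subgroup $U_{i,i'}^+ \subseteq U^+$ generated by the one-parameter subgroups $x_i(\R)$ and $x_{i'}(\R)$, and this subgroup is determined entirely by the rank-two Coxeter subsystem $\langle s_i, s_{i'}\rangle$. We may therefore assume without loss of generality that $W$ itself is dihedral of order $2m$, in which case both alternating words $Q = (i, i', i, \ldots)$ and $Q' = (i', i, i', \ldots)$ of length $m$ are reduced expressions for the longest element $w_0$.

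Within this rank-two setting, the key claim I would establish is that the map $f_Q\colon \R_{>0}^m \to U_{i,i'}^+$ is a homeomorphism onto a locally closed subset $U^+_{w_0,>0}$ depending only on $w_0$ and not on the chosen reduced word. Once this is granted, both $f_Q$ and $f_{Q'}$ are bijections onto the same set $U^+_{w_0,>0}$, and the proposition follows at once: given $(a_1,\ldots,a_m) \in \R_{>0}^m$, the point $f_Q(a_1,\ldots,a_m)$ lies in the image of $f_{Q'}$, and its unique preimage under $f_{Q'}$ is the desired $(a_1',\ldots,a_m')$.

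The main obstacle is the homeomorphism claim, which I would prove by handling each crystallographic dihedral possibility separately. The case $m = 2$ reduces to the commutation $x_i(a_1) x_{i'}(a_2) = x_{i'}(a_2) x_i(a_1)$, giving $(a_1', a_2') = (a_2, a_1)$. For $m \in \{3, 4, 6\}$, corresponding to types $A_2$, $B_2$, $G_2$, the Chevalley commutator relations between the relevant root subgroups yield closed-form rational expressions for the $a_j'$ in terms of the $a_j$. In type $A_2$, for instance, direct matrix multiplication in $SL_3(\R)$ gives
$$a_1' = \frac{a_2 a_3}{a_1+a_3}, \qquad a_2' = a_1 + a_3, \qquad a_3' = \frac{a_1 a_2}{a_1+a_3},$$
which are manifestly well-defined and positive whenever $a_1,a_2,a_3 > 0$, with a symmetric inverse transformation $a_1 = a_2'a_3'/(a_1'+a_3')$, etc. The analogous formulas in types $B_2$ and $G_2$ are longer but of the same character; positivity of the denominators is the substantive point to verify, and in each case uniqueness follows by reading the formulas for the inverse direction off the same commutator relations.
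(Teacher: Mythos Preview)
The paper does not supply its own proof of this proposition: it is quoted as background from \cite{Lu} (Proposition~2.6 there) and immediately followed by the type-$A$ example, with no argument given. So there is no ``paper's proof'' to compare against; any comparison would have to be with Lusztig's original argument.

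Your sketch is along sound lines and is close in spirit to how this is actually established. Two comments. First, be careful about circularity: you invoke the fact that $f_Q$ is a homeomorphism onto a set depending only on $w_0$, but in the paper's logical order that is the content of the \emph{next} cited result (Proposition~\ref{Lu-first-prop}, parts (a)--(b)), and in Lusztig's development Proposition~2.6 is used as an input to Proposition~2.7 rather than the other way around. You avoid this by proposing to verify the rank-two bijectivity directly case by case, which is fine, but then the ``homeomorphism onto the same set'' framing is really a restatement of what you are proving rather than a tool. Second, the $B_2$ and $G_2$ cases are where the actual work lies; the positivity of the intermediate expressions (e.g.\ that the rational formulas for the $a_j'$ have positive numerators and denominators whenever the $a_j$ are positive) is not automatic from the commutator relations alone and needs to be checked. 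Since the ambient group is semisimple, $W$ is a Weyl group and $m\in\{2,3,4,6\}$ is indeed exhaustive, so your case split is complete.
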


\begin{example}
In type A for $1\le i < n$, Proposition ~\ref{Lu-0th-prop} specializes to  the (more precise) 
statement 
$$x_i(t_1)x_{i+1}(t_2)x_i(t_3) = x_{i+1}\left(\frac{t_2t_3}{t_1 + t_3}\right)x_i(t_1 + t_3)x_{i+1}\left(\frac{t_1t_2}{t_1+t_3}\right) $$  for any 
$t_1,t_2,t_3 > 0$, as one may easily confirm  by matrix  multiplication.  
\end{example}

We will use the next result quite extensively, the last part of which appears within the proof of Proposition 2.7 in \cite{Lu}.

\begin{prop}\label{Lu-first-prop}[Lus94, Proposition 2.7]
Let $w\in W$, and  let 
$w= s_{i_1}s_{i_2}\cdots s_{i_d}$ be a reduced expression.  Then the following all hold.
\begin{enumerate}[(a)]
\item%
The map $f_{(i_1,\dots ,i_d)}$  from  $\R_{>0}^d$ to $ U$ given by
$(a_1,a_2,\ldots,a_d)\mapsto x_{i_1}(a_1)x_{i_2}(a_2)\cdots
x_{i_d}(a_d)$ is injective.
\item%
The image of the map $f_{(i_1,\dots ,i_d)}$  
from part  (a) is a subset $U(w)$ of $U_{\ge 0}$
which  depends on $w$ but not on the choice of reduced word 
$(i_1,i_2,\dots ,i_d)$ for $w$.
\item%
If  $w'\ne w$ for $w' , w\in W$, then $U(w)\cap U(w') = \nothing$.
\end{enumerate}\setcounter{separated}{\value{enumi}}
\begin{enumerate}[(a)]\setcounter{enumi}{\value{separated}}
\item%
For $a_1,\dots ,a_d$ nonzero elements of $\R$, we have
$$x_{i_1}(a_1) \cdots x_{i_d}(a_d)\in B^-s_{i_1}B^-s_{i_2}B^-\cdots
s_{i_d}B^- \subset B^-s_{i_1}s_{i_2}\cdots s_{i_d}B^-$$ by virtue of properties
of the Bruhat decomposition.
\end{enumerate}
\end{prop}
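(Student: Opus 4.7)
The plan is to prove the four statements in the order (d), (c), (b), (a), since each later part depends on the earlier ones. For (d), I would first verify the rank-one case inside the $SL_2$-subgroup associated to the simple root $\alpha_i$: when $a \neq 0$ the element $x_i(a)$ is not in $B^-$, so Gaussian elimination in this $SL_2$ expresses it as $l \cdot \dot{s}_i \cdot l'$ with $l, l' \in B^-$ and $\dot{s}_i$ a chosen representative, giving $x_i(a) \in B^- s_i B^-$. The general case then follows by induction on~$n$ using the standard Bruhat product identity $B^- v B^- \cdot B^- s_i B^- = B^- v s_i B^-$ whenever $\ell(v s_i) > \ell(v)$; reducedness of $(i_1, \ldots, i_n)$ is precisely what lets this identity be applied at each step, yielding both inclusions of (d). Part (c) is then immediate: if $p \in U^+(w) \cap U^+(w')$ with $w \neq w'$, part (d) would place $p$ in both Bruhat cells $B^- w B^-$ and $B^- w' B^-$, contradicting the disjointness of the Bruhat decomposition.

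For (b), by Theorem \ref{braid-connectedness} it suffices to check that short and long braid moves preserve the image when parameters range over $\RR_{>0}$. A short braid move corresponds to the commutation $x_i(a) x_j(b) = x_j(b) x_i(a)$ when $m(i,j) = 2$, which follows from $[e_i, e_j] = 0$ and commuting exponentials. A long braid move of length $m(i,j)$ is handled directly by Proposition \ref{Lu-0th-prop}, which provides an explicit positive reparametrization on the other side. Together with Theorem \ref{braid-connectedness}, this shows the image is independent of the choice of reduced word.

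The hard part is (a), injectivity. My plan is induction on $n = \ell(w)$. The base case is trivial since $x_i(a) = \exp(a e_i)$ determines~$a$. For the inductive step, the key move is to recover one parameter, say $a_n$, from the matrix $f_Q(a_1, \ldots, a_n)$. Once $a_n$ is known, right-multiplying by $x_{i_n}(a_n)^{-1}$ reduces to a reduced expression $(i_1, \ldots, i_{n-1})$ for $w s_{i_n}$ of shorter length, to which the induction hypothesis applies. To recover $a_n$, I would use a suitable \emph{chamber minor} or generalized minor attached to the reduced word $(i_1, \ldots, i_n)$: by part (d), $f_Q(a)$ lies in the Bruhat cell $B^- w B^-$ where such a functional is nonzero, and on the image $U^+(w)$ the functional evaluates to a Laurent monomial in $(a_1, \ldots, a_n)$ whose dependence on $a_n$ is strictly monotone for positive parameters. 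The main technical subtlety will be constructing this functional so that it is genuinely well-defined and recovers $a_n$ uniquely; in type~A this can be accomplished by a specific corner flag minor, while in general Fomin--Zelevinsky generalized minors associated to the fundamental weight $\omega_{i_n}$ provide the tool.
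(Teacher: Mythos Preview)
The paper does not supply its own proof of this proposition: it is quoted verbatim as background from \cite{Lu} (Proposition~2.7 there), with no \texttt{proof} environment in the source. So there is nothing in the present paper to compare your argument against directly; the relevant comparison is with Lusztig's original argument.

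That said, your outline is sound and close in spirit to how these facts are established. Part~(d) is exactly the standard rank-one computation plus the multiplicative Bruhat identity, and (c) follows immediately. Your argument for (b) via Theorem~\ref{braid-connectedness} together with Proposition~\ref{Lu-0th-prop} is essentially Lusztig's own: he proves (b) precisely by reducing to the rank-two braid relation, which is the content of his Proposition~2.6. For (a), your plan to peel off the last parameter and induct is correct, and generalized minors in the sense of Fomin--Zelevinsky do give a clean way to recover~$a_n$; this is the Berenstein--Fomin--Zelevinsky approach rather than Lusztig's original, which instead uses the Bruhat factorization in~(d) more directly (the $B^-$-component determines enough to extract the parameter). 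Either route works. The one place your write-up is genuinely incomplete is the claim that the relevant minor ``evaluates to a Laurent monomial \ldots\ whose dependence on $a_n$ is strictly monotone'': this is true but requires either the explicit Chamber Ansatz formulas or a careful weight-space computation, and you should not leave it as an assertion.
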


\begin{prop} [Lus94, Corollary 2.8]
 $U_{\geq 0} = \sqcup_{w\in W} U(w)$, i.e.,  $\{ U(w)  \}_{w\in W}$ is a decomposition of $U_{\geq 0}$.
\end{prop}

Combining this with Proposition 4.2 of \cite{Lu}  easily yields that this decomposition is a cell decomposition.
Implicit in Lusztig \cite{Lu} is also the following  result (see also   Whitney   
\cite{Wh}   and  Loewner for earlier work   in \cite{Lo} in type A):

\begin{thm}
Given any reduced word $Q$ for the longest element $w_0$ in the symmetric group, the image of 
$f_Q$ as a map on $\RR_{\ge 0}^d$ is the entire space of real-valued upper triangular matrices with 1's on the diagonal having all minors nonnegative.  The analogous statement also
holds more generally for the totally nonnegative part $U_{\geq 0}$  of the unipotent radical of a Borel subgroup in a reductive group defined and split over $\R$.
\end{thm}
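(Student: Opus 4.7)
The plan is to combine three ingredients already in hand: the density of the totally positive part inside the totally nonnegative part (Proposition \ref{Lu-second-prop}), the properness of $f_Q$ (Proposition \ref{proper-result}), and the fact that the restriction $f_Q|_{\RR_{>0}^d}$ is a homeomorphism onto a set of the same dimension as $U^+$ (Theorem \ref{lusztig-homeomorphism-interior}).

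First, I would dispose of the easy inclusion $f_Q(\RR_{\geq 0}^d) \subseteq U^+_{\geq 0}$. In type A this is clear because each $x_i(t) = I_n + tE_{i,i+1}$ is totally nonnegative for $t \geq 0$, products of totally nonnegative matrices are totally nonnegative (by the Cauchy--Binet formula applied to each minor), and the totally nonnegative condition is closed. In general type the analogous statement is the content of Proposition~\ref{Lu-first-prop}(b), combined with continuity.

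Second, I would show that the image of $f_Q$ on the open orthant already fills out $U^+_{>0}$. By Theorem \ref{lusztig-homeomorphism-interior}, $f_Q|_{\RR_{>0}^d}$ is a homeomorphism onto $U^+(w_0)$, which is therefore a $d$-dimensional subset of the $d$-dimensional manifold $U^+$; since it is also contained in the closed set $U^+_{\geq 0}$, it must be an open subset of $U^+$ lying in the interior of $U^+_{\geq 0}$, and that interior is precisely $U^+_{>0}$ (because any matrix with some minor equal to zero can be perturbed to have that minor negative). The reverse inclusion $U^+_{>0} \subseteq U^+(w_0)$ is where the work lies: in type A it is Whitney's classical factorization, provable by a concrete row-reduction algorithm that peels off one factor $x_{i_j}(t_j)$ at a time, and in general type it is extracted from Lusztig's disjoint decomposition $U^+_{\geq 0} = \bigsqcup_{w \in W} U^+(w)$ together with Proposition~\ref{Lu-first-prop}(c), since $U^+(w_0)$ is the unique stratum of full dimension.

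Third, I would finish by invoking properness. By Proposition~\ref{proper-result}, $f_Q : \RR_{\geq 0}^d \to U^+_{\geq 0}$ is proper; since the target is Hausdorff and the source is $\sigma$-compact, $f_Q$ is a closed map, so $f_Q(\RR_{\geq 0}^d)$ is a closed subset of $U^+_{\geq 0}$. This closed set contains $U^+_{>0}$ by Step~2, and $U^+_{>0}$ is dense in $U^+_{\geq 0}$ by Proposition~\ref{Lu-second-prop}, so the image must equal all of $U^+_{\geq 0}$. The same three steps go through verbatim in arbitrary finite type, since each cited ingredient is established in Lusztig's framework at that level of generality.

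The main obstacle is the reverse inclusion in Step 2, namely that every totally positive element factors through a reduced word for~$w_0$. In type A one can exhibit this directly by an inductive row-reduction argument (the Whitney--Loewner algorithm), but beyond type A a direct combinatorial construction is not available and one must instead rely on Lusztig's partition of $U^+_{\geq 0}$ into the cells $U^+(w)$ to pin $U^+(w_0)$ down as the unique open stratum. Once that structural fact is granted, the three-step density-plus-properness argument produces the surjectivity essentially formally.
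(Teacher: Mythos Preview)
The paper does not actually prove this theorem: it is quoted as background, attributed to Whitney \cite{Wh} (via Loewner \cite{Lo}) in type~A and to Lusztig \cite{Lu} in general. So there is no paper proof to compare against, only the original sources. Your three-step outline (image contains $U^+_{>0}$; image is closed by properness; $U^+_{>0}$ is dense in $U^+_{\ge 0}$) is correct and is in fact essentially how Lusztig himself deduces the surjectivity in the course of proving Proposition~4.2 of \cite{Lu}, which is exactly where the paper says both Proposition~\ref{Lu-second-prop} and Proposition~\ref{proper-result} are extracted from.

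One simplification: your Step~2 does more work than necessary. In Lusztig's framework $U^+_{>0}$ is \emph{defined} to be $U^+(w_0)$, i.e.\ the image $f_Q(\RR_{>0}^d)$ for any reduced word $Q$ of~$w_0$ (this is well-defined by Proposition~\ref{Lu-first-prop}(b)). So the equality $f_Q(\RR_{>0}^d)=U^+_{>0}$ is tautological in general type, and the invariance-of-domain maneuver and the appeal to the disjoint decomposition are unneeded. The only place a genuine argument is required is the type~A statement that $U^+(w_0)$ coincides with the set of upper unitriangular matrices with all minors strictly positive; that is the Whitney--Loewner content, and your row-reduction sketch is the right idea there. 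You should also be aware of a mild circularity hazard: since density (Proposition~\ref{Lu-second-prop}(b)) and properness (Proposition~\ref{proper-result}) are both harvested from the same proof in \cite{Lu} that establishes this very surjectivity, citing them as black boxes is fine within the logical structure of \emph{this} paper but is not an independent reproof of Lusztig's result.
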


In type A, one way to describe the stratum $U(w)$ is to note  that the   
matrices   $M\in U_{\ge 0}$  within any such  stratum are those in which certain minors  are strictly positive and the other minors are all  zero.  
More generally (i.e. in all cases where the Weyl group is a finite group), we use the  notion of {\em generalized minor} that was  
introduced in \cite{FZ} to specify the strata based on which generalized minors are positive and which are 0.  The generalized minors are 
certain types of 
 regular  functions on an  algebraic group, with this  notion  
 coinciding  in type A with the usual minors. 

Another way of defining the strata in all finite types that will also play an important role in our work  is as follows.  
 For each  
$p = f_{(i_1, \cdots, i_d)}(t_{i_1}, \dots, t_{i_d}) =  x_{i_1}(t_1)\cdots x_{i_d}(t_d)$,  the assignment of $p$ to a 
strata is dictated by taking the subword of
$(i_1, \cdots, i_d)$ consisting of exactly those letters $i_j$
such that $t_j>0$ and  letting $w$ be the 
Demazure product of the subword.   Then $p \in U(w)$.

For $W$ a finite group, let  $U_{> 0} = U(w_0)$ where $w_0\in W$ is the longest element in $W$.   Lusztig \cite{Lu} also proved a collection of topological results, parts (a), (b), (c) and (e) below, while Fomin and Shapiro \cite{FS} proved (f) below, and (d) below is a direct consequence of Lusztig's other results, as explained shortly.

\begin{thm} \label{Lu-theorem}
(a) $U_{\ge 0}$ is a closed subset of $U$.\\
(b) $U_{>0}$ is a dense subset of $U_{\ge 0}$.   More generally, $\overline{U(v)}  = \sqcup_{u\le v}  U(u)$.\\
(c) Given any  (not necessarily reduced) word $(i_1,\dots ,i_d)$,   
the map $f_{(i_1,\dots ,i_d)}$ is a proper map from $\RR_{\ge 0}^d$ to $U_{\ge 0}$.  \\
(d) If $Q= (i_1, \dots, i_d)$ is a reduced word for $w \in W$, then $f_Q : \R^d_{>0} \to U(w)$ is a homeomorphism and $f_Q : \R^d_{\geq 0} \to \overline{U(w)}$ is a quotient map.  \\
(e) $\{ U(w)  \}_{w\in W}$  is a cell stratification of $U_{\geq 0}$; more generally, for any $v \in W$, $\{ U(u)  \}_{u\ \leq v}$  is a cell stratification of $\overline{U(v)}$. \\
(f)  $ u \leq v$ (in the Bruhat order) if and only if $U(u) \subseteq \overline{U(v)}$.
\end{thm}

Note that part (d) of Theorem \ref{Lu-theorem} follows from part (a) of Proposition \ref{Lu-first-prop} and part (c) of Theorem \ref{Lu-theorem} since a proper bijective (resp.\ surjective) map from a locally compact space to a  locally compact, Hausdorff space is a homeomorphism (resp.\ quotient map).  To see that the images of these maps are locally compact, note that they are closed subsets of Euclidean space, by virtue of being defined by closed conditions (i.e the requirements  that various generalized minors are nonnegative and that other generalized minors are 0).

We now discuss $f_Q$ where $Q$ is not necessarily a reduced word.    
A key  consequence of Lemma ~\ref{open-cell-image-description} below  
is that that 
the image of $f_Q$  equals the image of $f_R$ for any  reduced word $R$ having $\delta (R) = \delta (Q)$.  On the other hand, we will show later in the paper that  
the fibers for $f_Q$ with non-reduced $Q$  will be larger than for $f_R$ given by  
any  subword $R$ of $Q$  that is reduced and has the same Demazure product as $Q$.  
The following lemma follows from Proposition \ref{Lu-0th-prop}.

\begin{lemma} \label{open-cell-image-description}
 Let $Q$ be a word of size $d$ (reduced or not).\\
 (a) $f_Q(\RR^d_{>0}) = U(\delta(Q))$.\\
 (b) $f_Q(\RR^d_{\geq 0}) = \overline{U(\delta(Q))}$.
\end{lemma}

In particular, let $R$ be a subword of $Q$ which is a reduced word  for $\delta(Q)$ (see Remark \ref{subword_for_Demazure_product}).    Let $d$ be the length of $Q$ and $e$ be the length of $R$.   Consider $\RR^e$ as a subset of $\RR^d$ by putting zero in the slots $Q \backslash R$.    Then $f_Q$ restricts to $f_R$ and the image of $f_Q$ is the same as that of $f_R$.

When we turn to the Fomin-Shapiro Conjecture near the end of the paper, we will 
shift  perspective from the topology of $U_{\geq 0}$ to the topology of the link of the identity in $U_{\ge 0}$. 
Recall that  $\Delta^{d-1}_K = \{ (t_1, \dots, t_d) \in \R^d_{\geq 0} \mid \sum t_i = K\}$ for any positive real constant $K$  is  a  standard $(d-1)$-simplex.     We sometimes denote $\Delta^{d-1}_1$ simply by $\Delta^{d-1}$.

\begin{definition} \label{link-definition}
 Fix a reduced word  $Q$ for the longest element $w_0 \in W$.   The {\em link of the identity in $U_{\geq 0}$} is $f_Q(\Delta^{l(w_0)-1})$.     
 More generally, if $v \in W$ and if $Q$ is a reduced word for $v$, the {\em link of the identity in $\overline{U(v)}$} is $f_Q(\Delta^{l(v)-1})$.
\end{definition}

Restricting the cell stratification of $U_{\geq 0}$ gives a cell stratification of the link of the identity where the dimensions of the cells are  each decreased by one.

Results of Bj\"orner and Wachs \cite{BW} and Bj\"orner  \cite{Bj} showed that the Bruhat order on $W$ is the face poset of a regular CW decompositon of a ball.   Hersh subsequently strengthened this result  to a proof of the Fomin-Shapiro Conjecture, namely the result 
described next.

\begin{thm}[\cite{He}]  The link of the identity in $U_{\geq 0}$ is a regular CW complex with   cells 
$\{U(w) \cap \Delta^{d-1}\} $ 
indexed by the elements  $w\in W$.  The poset of closure relations on these cells  is Bruhat order. 
 More generally, for any $v \in W$, the link of the identity in $\overline{U(v)}$ is a regular CW complex with cells 
$\{U(u) \cap \Delta^{d-1}\}_{u \leq v}$ having closure poset a lower interval in Bruhat order. 
\end{thm}

The above results imply for  $f_{(i_1,\dots ,i_d)}$ restricted to the simplex  $\Delta^{d-1}$  that  $im(f_{(i_1,\dots ,i_d)})$  
decomposes into cells that are each  the image of one or more cells in the standard cell decomposition of $\Delta^{d-1}$.  In particular, this guarantees that the map $f_{(i_1,\dots ,i_d)}$ of stratified spaces induces a map of face posets.   The following  combinatorial description of this poset map was given  in Corollary 5.3  in \cite{He} and studied further  in \cite{AH}.

\begin{prop}\label{Demazure-poset-map}
The Demazure product induces a poset map $f$ from the face poset of a simplex to the face poset for the stratification we  
just described for 
$\overline{U(w)}$.
In other words, it induces a poset map  $f$ from 
the  Boolean lattice  of subwords of a fixed reduced word  $(i_1,\dots ,i_d)$
for a Coxeter group element $w\in W$ to
the  Bruhat order interval $[1,w]$.    
This poset  map $f$ 
sends the  subword  $(i_{j_1},\dots ,i_{j_k})$    to the Bruhat order element
$\delta (s_{i_{j_1}},\dots ,s_{i_{j_k}}) \in W$.    
\end{prop}

\begin{remark}The former poset in Proposition ~\ref{Demazure-poset-map} is a Boolean lattice because 
each subset $\{ j_1,\dots ,j_k\} $ of $\{ 1, \dots , d\} $ 
with $1\le j_1 < \cdots < j_k \le d$  naturally
corresponds to a subword $(i_{j_1},\dots ,i_{j_k})$ of $(i_1,\dots ,i_d)$.    Thus, we identify elements
of the Boolean lattice with subwords.
\end{remark} 

\subsection{Interior dual block complexes}\label{s:interior}

Each fiber of the  map $f_{(i_1,\dots ,i_d)}$ comes with a combinatorial
decomposition (see Definition~\ref{d:natStrat}) induced by  the
stratification of the simplex (or equivalently the nonnegative orthant) by its
polyhedral faces.  The combinatorics of this decomposition of the
fiber precisely matches the block decomposition of the interior dual
block complex (see Definition~\ref{d:interiorDualBlock}) of a subword complex, as we will
show in  Proposition~\ref{p:fiberNerve}.  

\begin{defn}\label{d:dualBlock}
For a nonempty face~$\phi$ of a simplicial complex~$\Delta$, the
\emph{(closed) dual block} of~$\phi$ in~$\Delta$ is the underlying
space of the simplicial complex constructed as~follows:
\begin{itemize}
\item%
take the cone from the barycenter~$\beta$ of the face~$\phi$ over the
link of~$\phi$ in~$\Delta$;
\item%
barycentrically subdivide that cone; and then
\item%
take the star of~$\beta$
(equivalently, delete all vertices in the original link).
\end{itemize}
\end{defn}

\begin{definition}\label{manifold-def}
{\rm 
A {\it topological $n$-manifold with boundary} is a Hausdorff space $M$ having a countable basis of
open sets, with the property that every point of $M$ has a neighborhood homeomorphic
to an open subset of $\HH^n$, where $\HH^n$ is the half-space of points $(x_1,\dots ,x_n)$
in $\RR^n$ with $x_n\ge 0$.  The {\it boundary} of $M$, denoted $\partial M$, is the set
of points $x\in M$ for which there exists a homeomorphism of some neighborhood of $x$
to an open set in $\HH^n$ taking $x$ into $\{ (x_1,\dots ,x_n)\mid x_n = 0\}  = \partial \HH^n$.
}
\end{definition}

\begin{prop}\label{p:block=cell}
If $\Delta$ is a simplicial PL manifold-with-boundary with a nonempty
interior face~$\phi$ (that is, $\phi$ is not contained in the boundary
of~$\Delta$), then the dual block of~$\phi$ is homeomorphic to a
closed ball.
\end{prop}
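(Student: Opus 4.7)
The plan is to invoke a standard PL fact to pin down the topology of the link, then identify the dual block combinatorially as a cone over the barycentric subdivision of that link, and finally observe that a cone over a sphere is a ball. Since $\phi$ is a nonempty face of the PL manifold-with-boundary $\Delta$ not contained in $\partial\Delta$, the standard PL fact that interior faces of a PL manifold-with-boundary have PL sphere links gives that $K := \mathrm{link}_\Delta(\phi)$ is a PL sphere of dimension $\dim\Delta - \dim\phi - 1$. It follows that the cone $C := \beta * K$ is a PL ball of dimension $\dim\Delta - \dim\phi$, inside which $\beta$ sits as an interior vertex with $\mathrm{link}_C(\beta) = K$.

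Next I would unwind combinatorially what the closed star of $\beta$ in $\mathrm{sd}(C)$ looks like. A simplex of $\mathrm{sd}(C)$ is a chain $\sigma_0 \subsetneq \cdots \subsetneq \sigma_p$ of nonempty simplices of $C$, and it lies in the closed star of $\beta$ precisely when the chain admits $\{\beta\}$ as a minimum element, i.e.\ when every $\sigma_i$ contains $\beta$. The simplices of $C = \beta * K$ containing $\beta$ are exactly $\{\beta\}$ together with the joins $\beta * \tau$ for nonempty $\tau \in K$; under inclusion these form the poset $K \cup \{\hat{0}\}$, obtained from $K$ (with its face order) by adjoining a new global minimum. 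Hence the closed star of $\beta$ in $\mathrm{sd}(C)$ is the order complex of that augmented poset, which is the simplicial cone $\beta * \mathrm{sd}(K)$ from the vertex $\beta$ over the order complex $\mathrm{sd}(K)$ of $K$. This same bookkeeping also justifies the parenthetical in Definition~\ref{d:dualBlock}: the vertices of $\mathrm{sd}(C)$ that remain after deleting all barycenters of simplices lying entirely in $K$ are precisely the barycenters of simplices of $C$ containing $\beta$, so the two prescriptions produce the same subcomplex.

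The conclusion is then immediate: since $|K|$ is a sphere and $|\mathrm{sd}(K)| = |K|$, the underlying space $|\beta * \mathrm{sd}(K)|$ is the topological cone on a sphere, hence a closed ball of dimension $\dim\Delta - \dim\phi$. The one step with substantive content is the first, the PL fact that interior faces of a PL manifold-with-boundary have sphere links; the rest is a direct combinatorial unwinding together with the elementary observation that a cone over a sphere is a ball.
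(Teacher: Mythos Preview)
Your proof is correct and follows essentially the same outline as the paper's: both identify $K=\mathrm{link}_\Delta(\phi)$ as a PL sphere, form the cone $C=\beta * K$, subdivide, and take the star of~$\beta$. The only difference lies in the last step: the paper simply invokes the PL fact (from \cite[Theorem~4.7.21]{BLSWZ}) that the star of an interior vertex in a PL ball is again a PL ball, whereas you explicitly identify the closed star of~$\beta$ in $\mathrm{sd}(C)$ as the cone $\beta * \mathrm{sd}(K)$ and conclude it is a ball because it is a cone on a sphere. Your route is slightly more hands-on and avoids the second invocation of the PL star theorem, at the cost of the combinatorial unwinding; the paper's route is terser but leans more heavily on the cited PL machinery.
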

\begin{proof} 
This is a consequence of basic results on PL balls and spheres
\cite[Theorem~4.7.21]{BLSWZ}.  The link of an interior face~$\phi$ is
a PL sphere.  Hence the cone over the link from the
barycenter~$\beta$ of~$\phi$ is a PL ball, as is the barycentric
subdivision, with~$\beta$ as an interior vertex.  Thus the
star of~$\beta$ in the subdivision is another PL~ball.
\end{proof}

\begin{defn}\label{d:interiorDualBlock}
Fix $\Delta$, a simplicial manifold-with-boundary.  The \emph{interior
dual block complex} of~$\Delta$ is
\begin{enumerate}
\item%
the union of the dual blocks of interior faces of~$\Delta$, if
$\Delta$ has nonempty boundary.
\item%
a ball whose boundary is the dual block complex of~$\Delta$, if
$\Delta$ is
a sphere.
\end{enumerate}
(The \emph{dual block complex} of~$\Delta$ is the union of the dual
blocks of its nonempty faces.)
\end{defn}

\begin{lemma}\label{l:interiorDualBlock}
Fix a simplicial manifold~$\Delta$ with nonempty boundary.  If
$\Delta^\circ$ is obtained from the barycentric subdivision
of~$\Delta$ by deleting all vertices lying on the boundary and all cells 
with any of these vertices in their closure, then the
underlying spaces of~$\Delta^\circ$ and the interior dual block
complex of~$\Delta$ coincide.
\end{lemma}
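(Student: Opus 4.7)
The plan is to realize both sides of the claimed equality as the underlying space of the same subcomplex of the barycentric subdivision $\mathrm{sd}(\Delta)$. Recall that vertices of $\mathrm{sd}(\Delta)$ correspond bijectively to nonempty faces of~$\Delta$, with each simplex being a chain $\phi_0 \subsetneq \phi_1 \subsetneq \cdots \subsetneq \phi_k$ of such faces. A vertex of $\mathrm{sd}(\Delta)$ lies on $\partial\Delta$ exactly when the corresponding face is contained in $\partial\Delta$; hence $\Delta^\circ$ is precisely the subcomplex of $\mathrm{sd}(\Delta)$ whose simplices are the chains whose every term is an interior face of~$\Delta$.

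The key step is to identify the dual block of an interior face $\phi$, embedded in $|\Delta|$ in the natural way, with the underlying space of the subcomplex $B(\phi) \subseteq \mathrm{sd}(\Delta)$ whose simplices are chains $\psi_1 \subsetneq \cdots \subsetneq \psi_k$ satisfying $\phi \subseteq \psi_1$. To see this, write $L = \mathrm{Lk}(\phi,\Delta)$ and $C = \{\beta_\phi\} \ast L$. Deleting from $\mathrm{sd}(C)$ all vertices that arise as barycenters of nonempty faces of~$L$ leaves exactly those simplices whose corresponding chain of faces of~$C$ has every entry of the form $\{\beta_\phi\} \cup \tau$ for some (possibly empty) $\tau \subseteq L$. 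Under the tautological identification $\{\beta_\phi\} \cup \tau \leftrightarrow \phi \cup \tau$ of faces of~$C$ with faces of the closed star of~$\phi$ in~$\Delta$, such chains match precisely the chains $\psi_1 \subsetneq \cdots \subsetneq \psi_k$ in $\mathrm{sd}(\Delta)$ with $\phi \subseteq \psi_1$, namely the simplices of $B(\phi)$. This bookkeeping between the two barycentric subdivisions is the principal obstacle; once established, the rest is formal.

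The interior dual block complex is therefore $\bigcup_{\phi\,\text{interior}} B(\phi)$, whose simplices are the chains $\psi_1 \subsetneq \cdots \subsetneq \psi_k$ for which some interior face~$\phi$ satisfies $\phi \subseteq \psi_1$. To finish I would invoke the fact that $\partial\Delta$ is a simplicial subcomplex of~$\Delta$, hence closed under passage to subfaces: if $\psi \in \partial\Delta$ and $\phi \subseteq \psi$ then $\phi \in \partial\Delta$, so contrapositively every face containing an interior face is interior. Hence a chain lies in $\bigcup_{\phi\,\text{interior}} B(\phi)$ if and only if $\psi_1$, and consequently every $\psi_i$, is interior, which is exactly the defining condition for $\Delta^\circ$. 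Thus $\bigcup_{\phi\,\text{interior}} B(\phi) = \Delta^\circ$ as subcomplexes of $\mathrm{sd}(\Delta)$, and in particular they have the same underlying space.
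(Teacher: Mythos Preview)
Your proof is correct and is essentially a detailed unpacking of what the paper declares ``immediate from Definitions~\ref{d:dualBlock} and~\ref{d:interiorDualBlock}.'' The paper offers no argument beyond that one line, whereas you carry out explicitly the standard identification of the dual block of~$\phi$ with the subcomplex $B(\phi)\subseteq\mathrm{sd}(\Delta)$ of chains lying weakly above~$\phi$, and then verify that the union over interior~$\phi$ recovers the full subcomplex on interior barycenters. The only step worth a moment's pause is the geometric (not merely combinatorial) identification of the abstractly defined dual block with $|B(\phi)|\subseteq|\Delta|$; your correspondence $\{\beta_\phi\}\cup\tau\leftrightarrow\phi\cup\tau$ handles the combinatorics, and the geometric match follows because the closed star of~$\phi$ in~$\Delta$ is canonically the join $\phi*\mathrm{Lk}(\phi,\Delta)$, with $\beta_\phi$ landing at the barycenter of~$\phi$.
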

\begin{proof}
Immediate from Definitions~\ref{d:dualBlock}
and~\ref{d:interiorDualBlock}.
\end{proof}

\begin{prop}\label{p:dual-regularCW}
The interior dual block complex of any simplicial PL sphere or
simplicial PL manifold with nonempty boundary is a regular CW complex.
\end{prop}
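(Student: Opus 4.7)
The plan is to verify directly the defining conditions for a regular CW complex using the cell decomposition coming from Proposition~\ref{p:block=cell} together with Lemma~\ref{l:interiorDualBlock}. For a simplicial PL manifold-with-boundary~$\Delta$ of dimension~$d$, I declare the cells of the interior dual block complex $D^\circ$ to be the relative interiors of the dual blocks $D(\phi)$, one for each interior face $\phi$ of~$\Delta$. By Proposition~\ref{p:block=cell}, each $D(\phi)$ is a closed PL ball of dimension $d - \dim \phi$, so its interior is an open ball of the same dimension.

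The first step is to check that these open cells partition~$D^\circ$. By Lemma~\ref{l:interiorDualBlock}, each point of $D^\circ$ lies in a unique open simplex of the barycentric subdivision of~$\Delta$ of the form $\beta_{\phi_0} * \cdots * \beta_{\phi_k}$ for some chain $\phi_0 \subsetneq \cdots \subsetneq \phi_k$ of faces with $\phi_0$ interior. Such a simplex lies in the open star of $\beta_{\phi_0}$ in the subdivision, which in turn sits inside the open dual block $D(\phi_0)^\circ$. Thus each point of $D^\circ$ lies in exactly one open dual block, indexed by the smallest face in its carrying chain.

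For the characteristic map of the cell indexed by~$\phi$, I take any PL homeomorphism from the standard closed ball onto $D(\phi)$; this exists by Proposition~\ref{p:block=cell} and is automatically an embedding restricting to a homeomorphism of interiors. The topological boundary of $D(\phi)$ inside $D^\circ$ is the union of the open blocks $D(\psi)^\circ$ as $\psi$ ranges over interior faces strictly containing~$\phi$, each of strictly smaller dimension than $D(\phi)^\circ$; this yields the CW boundary condition. The remaining CW axioms (Hausdorff property, closure-finiteness, weak topology) hold automatically in this finite simplicial setting.

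For the sphere case, applying the argument above (with no boundary exclusions, since a sphere has none) shows that the dual block complex of~$\Delta$ is a regular CW decomposition of a PL $d$-sphere. The interior dual block complex is by definition a PL $(d+1)$-ball bounding this sphere, obtained for instance by coning. This ball is given the CW structure consisting of the cells of the dual block complex together with one additional top-dimensional cell, whose characteristic map is a PL homeomorphism from the standard $(d+1)$-ball carrying its boundary onto the dual block complex; such a homeomorphism exists because all PL $d$-spheres are PL homeomorphic. The main technical obstacle, aside from the uses of Proposition~\ref{p:block=cell}, will be coordinating the individual characteristic maps so that the attaching maps genuinely respect the asserted cell structure on the boundary; this is handled by the subcomplex description of each $D(\phi)$ inside the barycentric subdivision from Lemma~\ref{l:interiorDualBlock}, which lets one build the relevant PL homeomorphisms inductively on cell dimension.
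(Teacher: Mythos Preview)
Your argument is correct. You verify the CW axioms by hand: the open dual blocks partition $D^\circ$ because each barycentric simplex is indexed by its minimal face; Proposition~\ref{p:block=cell} supplies closed-ball characteristic maps; the boundary of $D(\phi)$ decomposes into the open blocks for faces $\psi \supsetneq \phi$, all of which are interior when $\phi$ is (since the link of $\psi$ is a link inside the sphere $\mathrm{lk}(\phi,\Delta)$); and the sphere case is handled by adding a single top cell to the already-regular dual block complex.

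The paper takes a shorter route: it simply invokes \cite[Theorem~64.1]{munkres}, which packages exactly the partition, boundary, and dimension statements you spell out, and then observes that Proposition~\ref{p:block=cell} upgrades the dual blocks to genuine closed balls, yielding regularity. Your proof is thus not a different argument so much as an unpacking of what the Munkres citation delivers. The advantage of your version is self-containment and that it makes visible the one subtle point (all faces containing an interior face are interior, so the attaching maps land in $D^\circ$); the advantage of the paper's version is brevity and a clean pointer to where the standard verification lives.
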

\begin{proof}
This is immediate from \cite[Theorem~64.1]{munkres}, given that the relevant
dual blocks are closed cells by Proposition~\ref{p:block=cell}.
\end{proof}

\begin{prop}\label{p:dual-contractible}
The interior dual block complex given by any regular CW decomposition  
of any ball  or sphere  is contractible.
\end{prop}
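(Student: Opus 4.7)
The plan is to treat the two cases of Definition~\ref{d:interiorDualBlock} separately. If $\Delta$ is a sphere, then by Definition~\ref{d:interiorDualBlock}(2) the interior dual block complex is, by definition, a ball, hence contractible, and there is nothing to prove. The entire content therefore lies in the ball case.

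For a regular CW ball $\Delta$, pass to the barycentric subdivision (a simplicial PL ball with the same underlying space and the same boundary) and invoke Lemma~\ref{l:interiorDualBlock}: $\Delta^\circ$ is the full subcomplex of $\operatorname{sd}(\Delta)$ on the vertices $b(\sigma)$ whose indexing face $\sigma$ is not contained in $\partial \Delta$. Since $\Delta \minus \partial \Delta$ is homeomorphic to $\RR^n$ and therefore contractible, it suffices to produce a deformation retraction of $\Delta \minus \partial \Delta$ onto $\Delta^\circ$.

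The deformation retraction is built simplexwise on $\operatorname{sd}(\Delta)$. For each simplex $\tau$, partition its vertices into the interior set $V^\circ(\tau)$ and the boundary set $V^\partial(\tau)$. A point $p = \sum_v t_v v$ of $\tau$ lies in $\partial \Delta$ precisely when $t_v = 0$ for all $v \in V^\circ(\tau)$, because $\partial \Delta$ is the subcomplex of $\operatorname{sd}(\Delta)$ spanned by the boundary vertices, so $\partial \Delta \cap \tau = [V^\partial(\tau)]$. For $p \in \tau \minus \partial \Delta$, set $T = \sum_{v \in V^\circ(\tau)} t_v > 0$ and define the retract $r(p) = T^{-1} \sum_{v \in V^\circ(\tau)} t_v v$, which lies in $\Delta^\circ$. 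The straight-line homotopy $H_s(p) = (1-s)\,p + s \cdot r(p)$ keeps the interior-vertex coefficients strictly positive for every $s \in [0,1]$, so the path stays in $\Delta \minus \partial \Delta$ throughout.

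The main obstacle is verifying that these simplexwise retractions glue into a continuous global deformation retraction. This reduces to checking that on a common face $\tau' \subseteq \tau$, the partition of vertices of $\tau'$ is the restriction of the partition of $\tau$, so that a point of $\tau'$ (which has zero coefficient on vertices of $\tau \minus \tau'$) receives the same image under either definition. Once this compatibility is established, the maps assemble into a continuous deformation retraction of $\Delta \minus \partial \Delta$ onto $\Delta^\circ$, showing that $\Delta^\circ$ is contractible.
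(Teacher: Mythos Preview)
Your proof is correct and follows essentially the same route as the paper's: the sphere case is by construction, and for the ball case both arguments use Lemma~\ref{l:interiorDualBlock} to identify the interior dual block complex with the full subcomplex of $\operatorname{sd}(\Delta)$ on the interior vertices, then deformation retract $\Delta \minus \partial\Delta$ onto that full subcomplex. The paper cites \cite[Lemma~70.1]{munkres} for this retraction, whereas you have written out its standard simplexwise proof explicitly.
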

\begin{proof}
The sphere case is by construction.  For balls, use
Lemma~\ref{l:interiorDualBlock} with \cite[Lemma~70.1]{munkres}:
removing the full closed subcomplex on a vertex set yields an open
subset that retracts onto the full closed subcomplex on the remaining
vertices.
\end{proof}

For further  background and basics on dual blocks and dual block complexes, see
\cite[\S 64]{munkres}.  For a brief introduction to \emph{piecewise
linear} or \emph{PL topology} that suits our purposes, see
\cite[Section~4.7(d)]{BLSWZ}. 
 For a more in-depth  introduction to the notion of
\emph{manifold-with-boundary}, see \cite[\S 35]{munkres}. 

  \commenth{The subsection called ``a topological interlude'' was just  moved from here  to Section 5, as suggested by Jim as a possibility.}

\section{Combinatorics of each  fiber}\label{sub:whole-fiber}

\commenth{I changed the next sentence because we 
should not use the term  ``Lusztig's parametrization'' in the next sentence, since this map was not introduced by Lusztig but is classical.   We could just say
``the maps $f_{(i_1,\dots ,i_d)}$''}

The relevance
 of interior dual block complexes of subword complexes
to 
the maps 
 $f_{(i_1,\dots ,i_d)}$ studied extensively by Lusztig arises via stratifications, as we discuss  now 
in this section.

\commenth{I think we should we be consistent with elsewhere and say $F = f_Q^{-1}(p)\cap \RR_{\ge 0}^d$ 
for some $p\in U(w)$ or maybe for some $p\in U_{>0}$.  That is, should we give some indication of what type of points we are taking fibers over.  I made a pass at doing this next}

\commenth{I was worried readers might be confused in the following definition when we speak of $\RR_{\ge 0}^P$ as to what is meant?  And by what
we mean when we speak of the positive orthant that all other coordinates must be 0.  So I clarified this.  Perhaps what I did is overkill though.  I also added the $(i_1,\dots ,i_d)$ so 
readers would know what $d$ we want to use in speaking of $\RR_{\ge 0}^d$.}

\begin{defn}\label{d:natStrat}
Fix a word~$Q = (i_1,\dots ,i_d)$ and a fiber $F  = f_Q^{-1}(p) \cap \RR_{\ge 0}^d$  
of the 
parametrization~$f_Q$ with $p\in U_{>0}$.  For each subword $P \subseteq Q$, let 
$F_P = F \cap \RR_{>0}^P$ be the intersection of~$F$ with the
strictly positive orthant indexed by~$P$, namely the region in which coordinates not indexed by $P$ are 0 
and those indexed by $P$ are strictly positive.  
The \emph{natural
stratification} of~$F$ has strata~$F_P$ for $P \subseteq Q$ and closed
strata $\ol F_{\!P} = F \cap \RR_{\geq 0}^P$.
\end{defn}

\begin{lemma}\label{l:intersect}
In the setting of the natural stratification as in
Definition~\ref{d:natStrat}, any nonempty intersection of closed
strata is a closed stratum.
\end{lemma}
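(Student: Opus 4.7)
The plan is to show that closed strata behave well under intersection because they are cut out by coordinate vanishing conditions, and the conjunction of two such conditions is again of the same form.

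First I would unpack the definition: for a subword $P \subseteq Q$, the orthant $\RR_{\geq 0}^P \subseteq \RR_{\geq 0}^d$ is the set of points $(t_1,\dots,t_d)$ with $t_i = 0$ for every position $i$ of $Q$ not lying in $P$. Therefore, given two subwords $P, P' \subseteq Q$, a point lies in both $\RR_{\geq 0}^P$ and $\RR_{\geq 0}^{P'}$ exactly when its coordinate $t_i$ vanishes for all $i \notin P$ and also for all $i \notin P'$, i.e., for all $i \notin P \cap P'$. Since subwords come equipped with their embeddings into $Q$, the intersection $P \cap P'$ is itself a well-defined subword of~$Q$, and we get the key identity
\[
  \RR_{\geq 0}^P \cap \RR_{\geq 0}^{P'} \;=\; \RR_{\geq 0}^{P \cap P'}.
\]

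Intersecting both sides with the fiber $F$ and using the definition $\ol F_{\!P} = F \cap \RR_{\geq 0}^P$ then yields
\[
  \ol F_{\!P} \cap \ol F_{\!P'} \;=\; F \cap \bigl(\RR_{\geq 0}^P \cap \RR_{\geq 0}^{P'}\bigr) \;=\; F \cap \RR_{\geq 0}^{P \cap P'} \;=\; \ol F_{\!P \cap P'}.
\]
When this intersection is nonempty, the right-hand side is by definition a closed stratum of the natural stratification. An immediate induction on the number of strata being intersected handles arbitrary finite intersections, since the intersection operation on subwords of~$Q$ is associative.

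I expect no substantive obstacle: the argument is essentially a tautology once one observes that closed strata are precisely the loci in~$F$ cut out by a conjunction of coordinate-vanishing conditions, and such conjunctions are closed under intersection. The only point worth being careful about is the convention that subwords of~$Q$ retain their position data, so that $P \cap P'$ is unambiguously a subword of~$Q$ rather than a multiset of simple reflections; this is built into Definition~\ref{d:subword}.
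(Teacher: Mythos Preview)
Your proposal is correct and follows exactly the paper's approach: the paper's proof is the single line ``By definition, $\ol F_{\!P} \cap \ol F_{\!P'} = \ol F_{\!P \cap P'}$ for any two subwords~$P$ and~$P'$ of~$Q$,'' and you have simply unpacked this identity in full detail.
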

\begin{proof}
By definition, $\ol F_{\!P} \cap \ol F_{\!P'} = \ol F_{\!P \cap P'}$
for any two subwords~$P$ and~$P'$ of~$Q$.
\end{proof}

\commenth{In this next proof we need to be in a matrix group.  I'm not sure we are assuming that elsewhere in the paper.}

\commenth{In the next statement, we should not call this Lusztig's parametrization, as he did not introduce this map.  
We also should not say ``over any point'' since we need to consider a point in 
$U(w)$ not just any point.  I made an attempt at fixing this below.    I use $(i_1,\dots ,i_d)$ instead of $Q$ since we reference this in the proof.
}
 \begin{lemma}\label{l:semialgebraic}
The fiber $F$ of the 
 parametrization $f_{(i_1,\dots ,i_d)}$ over any point in $U(w)$ 
  is a real
semialgebraic variety.  More precisely, $F$ is obtained by
intersecting the nonnegative orthant with the zero set of a family of
polynomials with real coefficients.
\end{lemma}
\begin{proof}
The condition for a point $(t_1,\ldots,t_d)$ with nonnegative
coordinates to lie in the fiber over a fixed matrix is polynomial
in~$t_1,\ldots,t_d$ because the entries of the product matrix
$x_{i_1}(t_1)\cdots x_{i_d}(t_d)$ are polynomials in $t_1,\ldots,t_d$.
\end{proof}

\commenth{I edited the statement of the next proposition, to say what the order relation is.  It had just said what the  elements of the partially ordered set were without any order relation on them.}

\begin{prop}\label{p:fiberNerve}
The partially ordered set $\{\ol F_{\!P} \mid P \subseteq Q\}$ of
closed strata of the natural stratification of the fiber~$F$  in
Definition~\ref{d:natStrat}, with strata partially ordered by containment,  is naturally isomorphic to the face poset
of the interior dual block complex of the subword complex
$\Delta(Q,w)$, where $w$ indexes the Bruhat cell containing the image
of~$F$.
\end{prop}
\begin{proof}
Notice that the interior faces of a subword complex $\Delta (Q,w)$ 
are given exactly by the subwords  $P$ of 
$Q$ whose complementary word $Q\setminus P$ has Demazure product exactly $w$, as shown 
in \cite[Theorem~3.7]{subword}.
Also recall that this subword complex is a sphere if and only if $\delta (Q) = w$, and recall that
the interior dual block complex of a $PL$-sphere consists of a dual cell to each cell of the 
original $PL$-sphere as well as one additional maximal cell having this sphere as its boundary.
  
With these facts  in mind, observe that the 
interior dual blocks of $\Delta(Q,w)$ are in containment-reversing
bijection with the interior faces of $\Delta(Q,w)$, noting that the empty face is an interior face
of a subword complex $\Delta (Q,w)$  if and only if $\delta (Q) = w$.
 The desired  stratification of $F = f_Q^{-1}(p)\cap \RR_{\ge 0}^d$
   for
 \commenth{I just changed  $Y_w^o$ to  $U(w)$}  $p\in U(w)$ 
 now  follows from the
 description of strata  one obtains by combining  Proposition 2.7 in 
\cite{Lu} with results in \cite{He}: 
that is, the open stratum~$F_P$ given by  a subword $P$  of $Q$ 
is nonempty if and only  if $\delta(P) = w$.
\end{proof}

\begin{prop}\label{c:dualBlock}
The interior dual block complex $\nabla(Q,w)$ of any subword complex
$\Delta(Q,w)$  
is a contractible regular CW complex.
In particular,
the nerve of the cover of 
$\nabla(Q,w)$ by its closed cells is contractible.
\end{prop}
\begin{proof}
The subword complex is a shellable ball or sphere by
Theorem~\ref{t:ball}, and therefore it is PL
\cite[Proposition~4.7.26]{BLSWZ}.  (For a ball,
\cite[Proposition~4.7.26]{BLSWZ} a~priori only implies directly that
it is PL if it has a shelling that can be completed to a shelling of a
sphere; but \cite[Theorem~4.7.21]{BLSWZ} implies that every shelling
of a ball~$B$ can be so extended by adding one closed cell, namely a
second copy of~$B$---thought of as a single closed cell---meeting the
original copy of~$B$ along its boundary.)  The result is now a special
case of Propositions~\ref{p:dual-regularCW}
and~\ref{p:dual-contractible}.
\end{proof}

In Remark ~\ref{non-pure-example}, we  observed 
that the interior dual block complex of the subword complex $\Delta (Q,w) $ for 
$Q = (1,3,2,1,3,2)$ and  $w = s_1 s_3 s_2$ is not pure.  Specifically, 
it has a 2-dimensional maximal cell and a one dimensional maximal cell, as shown in Figure 3. 
What leads to the presence  of  maximal cells of differing dimensions in this case  is the existence of an element $u\in W$ that is 
 less than $w$ in Bruhat order but not in weak order.   This example  also has $\delta (Q) \ne w$.  
It seems natural to ask: 

\begin{question}  
Suppose $w\in W$ has exactly the same elements below it in weak order as in Bruhat order.  In finite type, this is equivalent to assuming
 that $w$ is the longest element of a parabolic subgroup of $W$.  
Suppose  $Q$ is a word satisfying $\delta (Q) = w$.  Does
this imply  that the interior dual block complex of the subword complex $\Delta (Q, w)$ 
is pure (namely has all its maximal cells of the same dimension)? 
\end{question}

We conclude this section by justifying  a fundamental 
property of $f_{(i_1,\dots ,i_d)}^{-1}(p)\cap \RR_{\ge 0}^d$ that relies heavily on the combinatorial structure just described for each fiber.  

\begin{lemma}\label{intersect-with-simplex}
Consider  any $w\in W\setminus \{ e\} $, any $(i_1,\dots ,i_d)$ satisfying $\delta (i_1,\dots ,i_d)=w$, and any $p\in U(w)$.  
Let   $\Delta^{d-1}_K$ denote the subset of $\RR_{\ge 0}^d$ having coordinates summing 
to $K$, namely a particular closed $(d-1)$-simplex within $\RR^d$.
Then 
$$f_{(i_1,\dots ,i_d)}^{-1}(p)\cap \RR_{\ge 0}^d = f_{(i_1,\dots ,i_d)}^{-1}(p)\cap \Delta_K^{d-1}$$
for some real number  
$K>0$.   
\end{lemma}

\begin{proof}

Consider any  $(c_1,\dots ,c_d)\in f_{(i_1,\dots ,i_d)}^{-1}(p)\cap \RR_{\ge 0}^d$.  Since $w\ne e$, we must have $\sum_{i=1}^d c_i = K$ for some positive real number $K$.  Our
plan is to show $\sum_{i=1}^d k_i = K$  
 for every $(k_1,\dots ,k_d) \in f_{(i_1,\dots ,i_d)}^{-1}(p)\cap \RR_{\ge 0}^d$.  This will imply
$$f_{(i_1,\dots ,i_d)}^{-1}(p)\cap \RR_{\ge 0}^d \subseteq f_{(i_1,\dots ,i_d)}^{-1}(p)\cap \Delta_K^{d-1},$$  from which the result will follow since 
$\Delta_K^{d-1}\subseteq \RR_{\ge 0}^d$ by definition.    

We  associate to any point $(k_1,\dots ,k_d)\in f_{(i_1,\dots ,i_d)}^{-1}(p)\cap \RR_{\ge 0}^d$   
a subword $(i_{j_1},\dots ,i_{j_s})$  of $(i_1,\dots ,i_d)$ comprised of  the letters  $i_{j_l}$ for $1\le j_l\le d$  such that $k_{j_l}>0$.   Having 
$f_{(i_1,\dots ,i_d)}(k_1,\dots ,k_d) = p$ for some $p \in U(w)$ implies $\delta (i_{j_1},\dots ,i_{j_s}) = w$. 
 We begin by checking that 
 the changes of coordinates given by modified nil-moves and braid moves each preserve
 the sum of the coordinates.

 If  $i_{j_r} = i_{j_{r+1}}$, 
 then $\delta (i_{j_1},\dots ,i_{j_r}, 
 i_{j_{r+1}},i_{j_{r+2}},\dots ,i_{j_s}) = \delta (i_{j_1},\dots ,i_{j_r},
 i_{j_{r+2}},\dots ,i_{j_s})$.  Applying a modified nil-move deleting $i_{j_{r+1}}$ 
  carries out the change of coordinates 
   which replaces
$(k_1,\dots ,k_d)$ by the point $(k_1',\dots ,k_d')$ in which 
$k_{j_r}' = k_{j_r}+k_{j_{r+1}}$, $k_{j_{r+1}}' = 0$ and for $s \not\in \{ r,r+1\} $ we have
$k_{j_s}' = k_{j_s}$.
 Note   that each such change of coordinates  preserves the sum of the parameters and 
reduces  the number of letters in the associated subword of $(i_1,\dots ,i_d)$ by exactly one. 
Thus, each modified nil-move preserves the desired sum.

If we  take the point $(k_1,\dots ,k_d) \in f_{(i_1,\dots ,i_d)}^{-1}(p)$  and 
apply a braid move to $(i_{j_1},\dots ,i_{j_s})$, then Proposition ~\ref{Lu-0th-prop} guarantees the existence and uniqueness
of a point $(k_1',\dots ,k_d')$ that is the  change of coordinates from $(k_1,\dots ,k_d)$  given by our braid move.    
By definition, any braid move preserves the number of   
 letters in the associated word. 
  The proof of Theorem 3.1 
 shows   for each braid  relation $$x_i(t_1)x_j(t_2)x_i(t_3)\cdots = x_j(t_1')x_i(t_2')x_j(t_3')\cdots $$ that 
$\sum_{i\ge 1} t_{2i-1} = \sum_{i\ge 1}t_{2i}'$ and $\sum_{i\ge 1}t_{2i} = \sum_{i\ge 1} t_{2i-1}'$. 
Adding these two equations yields the desired preservation of the sum of the parameters  under the change of coordinates given by the braid move.
 Alternatively, one may justify  that  the 
   sum of just those  parameters  given by any fixed $i\in I$  (i.e. those parameters $t_j$ appearing  in some $x_{i_j}(t_j)$ with $i_j = i$) 
   is   preserved  under each   modified nil-move and   each   braid move  
 by using the homomorphism $r$ discussed in 2.17 in \cite{Lu} and add the resulting set of equations  
 to deduce that  each of these  moves  also preserves the sum of all the  parameters.  
Thus, each braid move 
preserves the sum of the parameters.
 
Shortly we will  verify  
the following  three claims:   
\begin{enumerate}
\item
Given   any non-reduced subword $Q$ of $(i_1,\dots ,i_d)$ having $\delta (Q)=\delta (i_1,\dots ,i_d)$, there exists  a series of braid moves and modified nil-moves that may be 
applied  to $Q$ to  obtain a subword of $Q$  
  that has  one fewer letter  than $Q$. 
\item
Given  any non-reduced subword $Q$ of $(i_1,\dots ,i_d)$ with $\delta (Q) = \delta (i_1,\dots ,i_d)$ and any  reduced  subword $P$ of $Q$ having exactly one fewer letter than 
$Q$ with $\delta (Q) = \delta (P)$, there exists a series of braid moves and modified nil-moves that may be applied 
to $Q$ to obtain $P$.
\item
Given any two reduced subwords $R,R'$  of $(i_1,\dots ,i_d)$ with $\delta (R) = \delta (R') = \delta (i_1,\dots ,i_d)$, 
there exists a  series $R_1,R_2,\dots ,R_j$  of  steps from $R=R_1$  to $R'=R_j$ 
 where  $R_1,R_2,\dots ,R_j$  are  reduced subwords of $(i_1,\dots ,i_d)$ 
  with the further property that 
 any  two such  consecutive reduced subwords $R_i,R_{i+1}$ 
   are contained in a   non-reduced subword $N_{i,i+1}$ of $(i_1,\dots ,i_d)$  with $\delta (N_{i,i+1}) = \delta (R_i) = \delta (R_{i+1})$ where $N_{i,i+1}$  has 
   one more letter in it  than $R_i$ and $R_{i+1}$ each have. 
   \end{enumerate}
The first claim guarantees that we  may  move from any point in $f_{(i_1,\dots ,i_d)}^{-1}(p)\cap \RR_{\ge 0}^d$  whose associated word is non-reduced to a point 
in $f_{(i_1,\dots ,i_d)}^{-1}(p)\cap \RR_{\ge 0}^d$ whose associated word is reduced, doing so via moves which we have already shown each preserve 
the desired sum of parameters.  The second and third  claim combine to ensure we can move from any point in $f_{(i_1,\dots ,i_d)}^{-1}(p)\cap \RR^d_{\ge 0}$ whose associated word is reduced to any other point  in $f_{(i_1,\dots ,i_d)}^{-1}(p) \cap \RR_{\ge 0}^d$ whose associated word is also reduced,
again via moves which  each preserve the sum of the parameters.    In this manner, we deduce 
the desired  set containment  
 $f_{(i_1,\dots ,i_d)}^{-1}(p) \cap \RR^d_{\ge 0} \subseteq  f_{(i_1,\dots ,i_d)}^{-1}(p)\cap \Delta_K^{d-1}$ from the fact that all elements of 
 $f_{(i_1,\dots ,i_d)}^{-1}(p)$ have the same sum of parameters.  

Now to the proof of  claim 1.  
First note that any subword  $Q = (i_{j_1},\dots ,i_{j_s})$ of $(i_1,\dots ,i_d)$  that is not reduced has a subword  $Q' = (i_{j_l},i_{j_{l+1}},\dots ,i_{j_r})$ 
comprised of consecutive letters in $Q$ such that $Q'$  is not reduced but 
deleting 
 the  first letter  from $Q'$ yields a reduced word $Q'_2 = (i_{j_{l+1}},\dots ,i_{j_r})$ with $\delta (Q'_2) = \delta (Q)$  and
  deleting the last letter 
from $Q'$  also yields a reduced word $Q'_1 = 
(i_{j_l},i_{j_{l+1}},\dots ,i_{j_{r-1}})$ with $\delta (Q'_1) = \delta (Q)$.
 This is equivalent to saying that  $Q$ has a deletion pair 
 comprised of  $i_{j_l}$ and $i_{j_r}$. 
 By Theorem ~\ref{braid-connectedness},  we 
 may apply a series of braid moves to
 $Q'_2$ to convert it to $Q'_1$, leaving the rest of $Q$ unchanged. 
 Applying these moves within $Q$ has the effect of replacing  $Q'_2$ 
  by  a reduced word $Q''_2$  whose first  letter  
   is identical to 
   $i_{j_l}$ with this first letter  appearing  just to the right of $i_{j_l}$. 
 The resulting  pair of consecutive identical letters then  
 allows us to apply a modified nil-move to eliminate $i_{j_l}$. 
 Then we perform the reverse  series of braid moves to convert $Q''_2$ back to $Q_2$  with the net effect of 
 eliminating  one letter from $Q$, doing so in a manner  that exhibits  that claim 1 holds.

Next we show  how the proof of claim 1 can be modified slightly  to prove claim 2.  Consider $Q = (i_{j_1},\dots ,i_{j_s})$ of the form described in
claim 2.
Just as  in the proof of claim 1, $Q$ must include a subword 
$Q' = (i_{j_l},\dots ,i_{j_r})$ comprised of consecutive letters in $Q$ such that $Q'$ is not reduced  but deleting either $i_{j_l}$ or $i_{j_r}$ yields a reduced word, with  $\{ i_{j_j},i_{j_r}\} $ a deletion pair in $Q$.  Since $Q$ is assumed in claim 2  to have one more letter than a reduced word for $\delta (i_1,\dots ,i_d)$, it follows from results of 
\cite{subword} that $Q$ has exactly two subwords that are reduced; moreover, one is  
the subword obtained from $Q$  by deleting $i_{j_l}$ and the  other is the subword obtained from $Q$  by deleting $i_{j_r}$.  The proof
of claim 1 exhibits how to construct a series of braid moves and modified nil-moves transforming $Q$ into its subword in which just  $i_{j_l}$ is deleted.  A similar argument with everything mirrored will handle the case 
in which just $i_{j_r}$ is deleted.

Finally we prove claim 3. 
Here  we  utilize the fact that subword complexes are  gallery connected (due to being vertex decomposable and hence shellable, as is proven  in \cite{subword}).
   In our context, gallery connectedness  of subword complexes says exactly  that one may move from any subword  $R$  of $(i_1,\dots ,i_d)$ that is a reduced word for $w$  to any other subword  $R'$ of $(i_1,\dots ,i_d)$ that is also a reduced word for $w$  by a  series of steps $R = R_1\rightarrow R_2\rightarrow \cdots \rightarrow R_k =  R'$
 where 
$R_i$ and $R_{i+1}$ are each reduced subwords of $(i_1,\dots ,i_d)$ with Demazure product  $w$  such that there exists a non-reduced subword   $N_{i,i+1}$ of 
$(i_1,\dots ,i_d)$ 
where $N_{i,i+1}$    contains both $R_i$ and $R_{i+1}$ as subwords, satisfies 
$\delta (N_{i,i+1}) = w$ and has exactly one more letter than each of the words $R_i,R_{i+1}$.   
Thus,  $N_{i,i+1}$ has a subword $Q_{i,i+1}$  consisting  of consecutive letters of $N_{i,i+1}$  
such that one of the words $R_i,R_{i+1}$ is obtained from $N_{i,i+1}$ by deleting the leftmost letter of $Q_{i,i+1}$  
 and the other of $R_i,R_{i+1}$ is obtained from $N_{i,i+1}$  by deleting the rightmost letter of $Q_{i,i+1}$. 
 This shows that  $R_i,R_{i+1}$ are each obtained from $N_{i,i+1}$ by deleting a single letter from $N_{i,i+1}$ that belongs to a deletion pair within $N_{i,i+1}$, allowing us to use claim 2 to deduce claim 3.   
\end{proof}

\begin{example}
Lemma  ~\ref{intersect-with-simplex} shows for $p = x_1(5)x_2(7)x_1(13)$  that $f_{(1,2,1,1,2,2,1)}^{-1}(p) \cap \RR_{\ge 0}^7$ consists entirely of  points in the hyperplane in 
$\RR_{\ge 0}^7$ in which the parameters sum to $25$ since one point in $f_{(1,2,1,1,2,2,1)}^{-1}(p)\cap \RR_{\ge 0}^7$ is $(5,7,13,0,0,0,0)$ whose coordinates sum to 25.   One may
easily check in this case  that all of the strata, namely all subwords of $(1,2,1,1,2,2,1)$ having Demazure product $s_1s_2s_1$, are connected by braid moves and modified nil-moves.
\end{example}

\section{Cell  Stratification of Each Fiber}
\label{s:CW}

In this section, we prove that the fiber $f_{(i_1,\dots ,i_d)}^{-1}(p)\cap \RR_{\ge 0}^d$  of any point $p \in U(w)$ for $w = \delta (i_1,\dots ,i_d)$  has a stratification 
into open cells given by intersecting the  fiber with the various open cells in the  natural 
cell  decomposition of $\RR_{\ge 0}^d$ based on which coordinates are positive and which are 0.  
We begin by defining these strata, then give an overview of the proof before
turning to the details.

For each subword $Q$ of $(i_1,\dots ,i_d)$ having $\delta (Q) = w$ and each $p\in U(w)$, 
 we define  a stratum $F_Q\subseteq f^{-1}_{(i_1,\dots ,i_d)}(p)\cap \RR_{\ge 0}^d$ 
 as follows:

\begin{defn}\label{F-Q-def}
For each $T\subseteq \{ 1,2,\dots ,d\} $, let
$$\RR_{> 0}^T = \{ (t_1,\dots ,t_d) \in \RR_{\ge 0}^d | 
 t_i=0 \hspace{.1in}{\rm iff}\hspace{.1in}i\not\in T \}.$$  Let $\RR_{\ge 0}^T = \overline{\RR_{> 0}^T}$.
 For $Q = (i_{j_1},\dots ,i_{j_s})$ any subword of $(i_1,\dots ,i_d)$, let $S(Q) = \{ j_1,\dots ,j_s\}$ denote the support of $Q$.  Define the stratum  $F_Q$ as 
 $$F_Q = f_{(i_1,\dots ,i_d)}^{-1}(p) \cap \RR_{> 0}^{S(Q)}.$$  
Denote its closure as  $\overline{F_Q} = \cup_{Q'\subseteq Q} F_{Q'}$.
\end{defn}

The   basic  structure of the proof of Theorem A, the main result of this section,   is as follows.  
For each subword $Q$ of $(i_1,\dots ,i_d)$ having $\delta (Q) = \delta (i_1,\dots ,i_d)$
we will give a homeomorphism $f_{F_Q}$  
 from  $[0,1)^{\dim F_Q}$  to  a union of strata 
in  $f_{(i_1,\dots ,i_d)}^{-1}(p) \cap \RR_{\ge 0}^d$ which includes $F_Q$ as one of these strata.  
 We then  
 restrict  this  homeomorphism   $f_{F_Q}$ 
 to $(0,1)^{\dim F_Q}$ and show that the image of this restricted  homeomorphism   
  is  $F_Q$. 
  Finally, we show in Lemma ~\ref{decomp-is-strat}  that 
 the consequent 
 cell decomposition of $f_{(i_1,\dots ,i_d)}^{-1}(p)\cap \RR_{\ge 0}^d$  is   a cell stratification. 
 Lemmas ~\ref{generalized-Lusztig-homeom}  and  ~\ref{defined-forced-value}
 lay the groundwork  for defining  
  $f_{F_Q}$.   Well-definedness, injectivity and bijectivity   of $f_{F_Q}$ are  proven  in Lemmas ~\ref{thm:CW-map}, 
   ~\ref{f-F-Q-injective}, and  
  ~\ref{thm:CW-inverse}, respectively.  Theorem  ~\ref{thm:CW} proves that  $f_{F_Q}$ is a  homeomorphism.   

  \begin{example}\label{2-simplex-example}
  For $p = x_1(5)$  and $Q =  (1,1,1)$ in type A, the map $f_{F_Q}$ is  a homeomorphism from $[0,1)^2$ to the subset of 
  $f_{(1,1,1)}^{-1}(p)\cap \RR_{\ge 0}^3$ equalling  
  $$\{ (t_1,t_2,t_3)\in \RR_{\ge 0}^3 \mid 0\le t_1 < 5\hspace{.075in}{\rm and}\hspace{.075in}  0\le t_2 < 5-t_1\hspace{.075in}{\rm and}\hspace{.075in}  t_3 = 5-t_1-t_2 \} ;$$  
  this is exactly the subset of $f_{(1,1,1)}^{-1}(p)\cap \RR_{\ge 0}^d$  having third parameter
  positive.  The  map $f_{F_Q}$  restricts to a homeomorphism 
  from $(0,1)^2$ to 
  $$\{ (t_1,t_2,t_3)\in \RR_{\ge 0}^3 \mid  0< t_1 < 5\hspace{.075in}{\rm and}\hspace{.075in}  0 < t_2 < 5-t_1; t_3 = 5-t_1 - t_2 \} .$$   To show that the  other strata in
  $f_{(1,1,1)}^{-1}(p)\cap \RR_{\ge 0}^3$  are also 
  homeomorphic to open balls, we  likewise use 
  maps $f_{F_{Q}}$ given by the  other subwords $Q$ of $(1,1,1)$ having $\delta (Q) = \delta (1,1,1)$, for instance
  $Q = (1,1, - )$ and $Q = (- ,1,- )$.  
  \end{example}

We now  give a significantly  more detailed proof overview, as there are numerous ingredients to  the steps described above.
For any  subword $Q$ of $(i_1,\dots ,i_d)$ with $\delta (Q)=w$, let $supp(Q)$ denote the support of $Q$.  Let 
$S^C$ be the subset of $supp(Q)$ that  indexes 
the rightmost subword of $Q$  that is a reduced 
word for $w$.  Let  $S = supp(Q)\setminus S^C$. 
The domain 
$[0,1)^{\dim F_Q}$ for  the homeomorphism   $f_{F_Q}$ 
consists of  one copy of $[0,1)$ for each parameter with index in $S$, so  $\dim F_Q = |S| = |Q| -l(w)$.  
   This choice of which coordinates should contribute the half open intervals appearing in 
   $[0,1)^{\dim F_Q} $ 
   is justified in   part by  Lemma ~\ref{other-char}, a result which characterizes the rightmost subword of $Q$ 
   that is a reduced word for $w$ as consisting of exactly those  letters $i_{j_l}$ in $Q = (i_{j_1},\dots ,i_{j_{|Q|}})$ having the property  that $i_{j_l}$ is  non-redundant  (see Definition ~\ref{redundant-def})  in $(i_{j_l}, \dots ,i_{j_{|Q|}})$.  
    Lemma ~\ref{other-char} opens the door 
   to using  the results described  shortly   regarding  redundant and non-redundant letters  
    in a word $(i_1,\dots ,i_d)$ 
     to characterize the points in our aforementioned union of strata in a manner that is well-suited for defining  the maps 
     $f_{F_Q}$  and for  proving properties of  these maps  $f_{F_Q}$ for 
     each  subword $Q$ of $(i_1,\dots ,i_d)$ having $\delta (Q) = \delta (i_1,\dots ,i_d)$.
     
      \begin{remark}
  One point worth cautioning readers about before continuing our proof overview  is that we 
   typically state and prove results regarding  the word $(i_1,\dots ,i_d)$ throughout this section  
   rather than stating them  
   for more general subwords $Q$.  This is done because it often helps to be able to refer to the letters in our word even to 
   formulate our results. 
However, these  results 
are  all phrased and proven in such a way  that they apply equally well to any subword $Q$ having $\delta (Q) = \delta (i_1,\dots ,i_d)$.  
 \end{remark}
     
  Lemma    ~\ref{0-iff-max-to-left}  combines many of our  upcoming results 
       to describe in a useful way 
       a subset 
         of $f_{(i_1,\dots ,i_d)}^{-1}(p)\cap \RR_{\ge 0}^{d}$ ,  denoted $D^{<\max }$, 
      that  
      we prove in  Theorem   ~\ref{thm:CW} is 
     homeomorphic to 
     $[0,1)^{|S|}$ and contained $F_Q$. 
     
     \begin{example}
     For $p = x_1(5) $ and $Q=(1,1,1)$,  as in 
     Example  ~\ref{2-simplex-example}, 
     $$D^{<\max } = \{ (t_1,t_2,t_3)\in \RR_{\ge 0}^3 \mid 0 \le t_1 < 5; 0\le t_2 < 5-t_1; t_3 = 5-t_1-t_2\} \cong [0,1)^2.$$    In this 
     case, $t_1$ and $t_2$ each give rise to a half-open interval of possible values.  
     The upper bound on the interval for $t_2$ depends on the value chosen for $t_1$ in a way that is a continuous function
     of this value for $t_1$.  
     \end{example} 
     
   In Corollary ~\ref{closed-interval}, we show  for each $t_j$ with index  $j\in S$ 
   and each suitable  choice of values $k_1,\dots ,k_{j-1}\in \RR_{\ge 0}^d$  
 for  the  parameters
  $t_1,\dots ,t_{j-1}$ 
(i.e.  
extendable to  some $(k_1,\dots ,k_{j-1},t_j,\dots ,t_d)\in f_{(i_1,\dots ,i_d)}^{-1}(p)\cap \RR_{\ge 0}^d$ and satisfying certain non-maximality
constraints) 
that   there exists exactly  a closed interval $[0, t_j^{\max }(k_1,\dots ,k_{j-1})]$  of values $k_j$  that $t_j$  may take so that  there exists
$(k_1,\dots  ,k_j,t_{j+1},\dots ,t_d)\in f_{(i_1,\dots ,i_d)}^{-1}(p)\cap \RR_{\ge 0}^d$.
 Moreover, we prove  $t_j^{\max }(k_1,\dots ,k_{j-1}) >0$. 
   This is  all done  using  upcoming results  in  this section  regarding redundant letters   in a word $(i_1,\dots ,i_d)$.  
 Lemmas  ~\ref{last-letter-determined}, 
 ~\ref{generalized-Lusztig-homeom} and ~\ref{defined-forced-value}  
  show 
 for $t_j$ with index $j \in S^C$ and for  any suitable  choice of values $k_1,\dots ,k_{j-1}$ for the parameters $t_1,\dots ,t_{j-1}$  
(again meaning extendable to a point $(k_1,\dots ,k_{j-1},t_j,\dots ,t_d)\in f_{(i_1,\dots ,i_d)}^{-1}(p)\cap \RR_{\ge 0}^d$ and satisfying certain 
non-maximality constraints)
 that 
 there exists   a unique  value $k_j>0$ that  $t_j$ may take  such  that 
 $k_1,\dots ,k_{j-1},k_j$ is  extendable to a point  $(k_1,\dots ,k_d) \in f_{(i_1,\dots ,i_d)}^{-1}(p)\cap \RR_{\ge 0}^d$. 
  This 
    is done  using  results   in this section regarding   the
 non-redundant letters   in  a word
  $(i_1,\dots ,i_d)$. 
 
 Our  aforementioned non-maximality requirements exactly 
 rule out choosing the maximal possible value for each parameter $t_j$ given by a letter 
 $i_j$ that is redundant in $(i_j,\dots ,i_d)$.
The reason 
we choose to  focus on the  resulting  half-open intervals 
 $[0,t_j^{\max }(k_1,\dots ,k_{j-1}))$ of possible values for these parameters is  
 because  
  the resulting  smaller region $D^{< \max }$ within $f^{-1}_{(i_1,\dots ,i_d)}(p) \cap \RR_{\ge 0}^d$ 
 is  where the combinatorics  is well-behaved while  still being large enough to allow us to justify  that we get a cell decomposition
 for all of $f^{-1}_{(i_1,\dots ,i_d)}(p)\cap \RR_{\ge 0}^d$.  
 To describe $D^{< \max }$, we proceed through parameters from left to right.  Our results show  for the region $D^{<\max }$  that
 each redundant parameter  $t_j$ gives  a half-open interval  $[0,t_j^{\max }(k_1,\dots ,k_{j-1}))$ of possible 
 values 
  while the value for each non-redundant parameter $t_j$  is 
 uniquely determined as a function $f_j$ of  the values  $k_1,\dots ,k_{j-1}$ of the parameters to its left;  these results  
 get  combined  to obtain  a bijective function $f_{F_Q}$  from  $[0,1)^{|S|}$ to  a union of 
 strata.     

  In Lemma ~\ref{cont-forced-value}, we prove   
  for each  $j\in S^C$ that 
  $f_j$   is a continuous function 
  of  the values $k_1,\dots ,k_{j-1}$ chosen for 
 $t_1,\dots ,t_{j-1}$.   In   Lemma ~\ref{cont-max}, we  prove   for each  $j\in S$ 
 that $t_j^{\max }$ is   a  continuous function of the values $k_1,\dots ,k_{j-1}$ chosen for 
  $t_1,\dots ,t_{j-1}$. 
   These are  two of the  main  ingredients 
   in our proof that $f_{F_Q}$ is not just a bijection from $[0,1)^{|S|}$ to a union of strata  but is a homeomorphism.   
   To this end, a  key observation   is given in Lemma ~\ref{max-to-unique} 
   where we  interpret $t_1^{\max }$ as the unique value $t_1$ may take when we replace 
   $(i_1,\dots ,i_d)$ by any  subword $Q$ having $\delta (Q) = \delta (i_1,\dots ,i_d)$ 
    with the property that  $i_1$ redundant in $(i_1,\dots ,i_d)$ but non-redundant in $Q$; 
      this interpretation for $t_1^{\max }$ (and more generally for 
    $t_j^{\max }(k_1,\dots ,k_{j-1})$ for each $j\in S$) allows  us to deduce continuity for $t_j^{\max }$ for $j\in S$ from continuity of $f_j$ for    $j\in S^C$ (and is also used at various other places in this section as a helpful reduction step).

        We refer readers to Figure   ~\ref{big-strat-space} 
    and to  the 
     examples   in Sections ~\ref{intro-section} and ~\ref{sub:whole-fiber} 
    for examples of the types of stratifications that we now   analyze in detail  throughout the remainder of this section.

        \begin{figure}\label{big-strat-space}
\begin{tikzpicture}[scale=0.4]
%
\draw (0,0)--(0,8)--(2,10)--(12,12);
\draw (0,0)--(8,0);
\draw (8,0)--(12,4)--(12,12); 

\draw [thick] (0,0)--(0,8)--(2,10)--(12,12);
\draw [thick] (0,0)--(8,0)--(12,4)--(12,12);
\draw [thick] (0,8)--(8,0);

\draw [thick, dashed] (0,0)--(4,4)--(4,9)--(2,10);
\draw [thick, dashed] (4,9)--(12,12);
\draw [thick, dashed] (4,4)--(12,4);

\node [left] at (0,8){\scriptsize $(0,0,k_1,k_2,k_3,0$)}; 
\node [below left] at (0,0){\scriptsize $(k_1,0,0,k_2,k_3,0$)}; 

\node [above right] at (3.9,7.7){\scriptsize $(k_1,k_2,k_3,0,0,0)$}; 

\node [above right] at (3.9,4){\scriptsize $(k_1,k_2,0,0,k_3,0)$}; 

\node [above left] at (2.2,9.8){\scriptsize $(0,\frac{k_2k_3}{k_1+k_3},k_1+k_3,\frac{k_1k_2}{k_1+k_3},0,0)$}; 

\node [right] at (12,12){\scriptsize $(0,\frac{k_2k_3}{k_1+k_3},k_1+k_3,0,0,\frac{k_1k_2}{k_1+k_3})$}; 

\node [below right] at (11.8,4.2){\scriptsize $(0,\frac{k_2k_3}{k_1+k_3},0,0,k_1+k_3,\frac{k_1k_2}{k_1+k_3})$};
\node [below right] at (7.8,0.2){\scriptsize $(0,0,0,\frac{k_2k_3}{k_1+k_3},k_1+k_3,\frac{k_1k_2}{k_1+k_3})$};

\end{tikzpicture}
%
%
%
\caption{Cell stratification in type A for $f^{-1}_{(1,2,1,2,1,2)}(M)$ for any  
$M = x_1(k_1)x_2(k_2)x_1(k_3) \in U(s_1s_2s_1)$}
\end{figure}

     This section makes extensive use of the Demazure product on 
 a Coxeter system $(W,\Sigma )$, as this nonstandard 
 product naturally reflects the relations $x_i(t_1)x_i(t_2) = x_i(t_1+t_2)$ for $t_1,t_2\in \RR$ amongst exponentiated Chevalley generators $x_i(t)$ 
 as well as  reflecting  the degree $m(i,j)$	braid relations  $$x_i(t_1)x_j(t_2)x_i(t_3)\dots = x_j(t_1')x_i(t_2')x_j(t_3')\dots $$  
 for any  choice of $t_1,t_2,\dots ,t_{m(i,j)}>0$ where 
 $t_1',t_2',\dots ,t_{m(i,j)}'$ are also positive real numbers   (proven to exist  and be uniquely determined by Lusztig in Proposition ~\ref{Lu-0th-prop}).  
 These generators and  relations   turn out to 
govern the structure of the fibers of  the map $f_{(i_1,\dots ,i_d)}$ we are studying, as one might hope based on the fact that 
 $f_{(i_1,\dots ,i_d)}(t_1,\dots ,t_d)  = x_{i_1}(t_1)x_{i_2}(t_2)\cdots x_{i_d}(t_d)$.

\begin{defn}\label{redundant-def}
The letter $  i_{j}$ is {\it  redundant} in the word
$(i_1,\dots ,i_{d}) $ if   
$$\delta (i_1,\dots ,i_{j-1},i_j,i_{j+1}, \dots ,i_d) = \delta (i_1,\dots ,i_{j-1},\hat{i}_j,i_{j+1},\dots ,i_d). $$
The letter  $i_j$ is {\it non-redundant} in $(i_1,\dots ,i_d)$  otherwise, i.e., 
if $$\delta (i_1,\dots ,i_{j-1},i_j, i_{j+1}, \dots ,i_d)\ne \delta (i_1,\dots ,i_{j-1},\hat{i}_j, i_{j+1},\dots ,i_d).$$
\end{defn}

\begin{example}
The last letter in each of the words $(1,1)$ and  $(1,2,1,2)$ is redundant while the last letter of $(1,2,1,2,3)$ is non-redundant.  
\end{example}

\begin{remark}
One may show that if $i_j$ is redundant in $(i_1,\dots ,i_d)$  for some $j$ satisfying  $1 \le j \le d$ then this implies  $i_j$ is redundant in either $(i_1, \dots , i_j)$ or  $(i_j, \dots, i_d)$.   
\end{remark}

Proposition ~\ref{Demazure-cancellation} below  
will be  helpful for reducing statements about Demazure products to  statements about Coxeter theoretic products, a setting where statements  are more readily proven.  The latter product has the advantage that a cancellation law holds, due to the presence of inverses of elements.    


\commenth{I just changed the next proposition since we actually use it  in more generality than how it was stated -- specifically,
we may have different numbers of letters in the two words having equal Demazure product to each other, so I changed some instances of $d$ to $d'$ throughout to fix this.}

\begin{prop}\label{Demazure-cancellation}
If $\delta (i_1,\dots ,i_{d-1},i_d) = \delta (j_1,\cdots ,j_{d'-1},i_d)$ 
with   $i_d $  non-redundant in
$(i_1,\dots ,i_d)$ and in  $(j_1,\dots ,j_{d'-1},i_d)$, then 
$\delta (i_1,\dots ,i_{d-1}) = \delta (j_1,\dots ,j_{d'-1})$.

Likewise, if $\delta (i_1,i_2,\dots ,i_d) = \delta (i_1,j_2,\dots ,j_{d'})$
with  $i_1$  non-redundant in  $(i_1,\dots ,i_d)$ and  in $(i_1,j_2,\dots ,j_{d'})$, 
then $\delta (i_2,\dots ,i_d) = \delta (j_2,\dots ,j_{d'})$.
\end{prop}

\begin{proof}
First consider the statement with $i_d$ non-redundant.
Non-redundancy of $i_d$ implies $\delta (i_1,\dots, i_d) = \delta (i_1,\dots ,i_{d-1}) s_{i_d}$ and 
$\delta (j_1,\dots ,j_{d'-1},i_d) = \delta (j_1,\dots ,j_{d'-1})s_{i_d}$.  Since $\delta (i_1,\dots ,i_d) = \delta (j_1,\dots ,j_{d'-1},i_d)$, we may multiply both sides of 
$\delta (i_1,\dots ,i_{d-1})s_{i_d} = \delta (j_1,\dots ,j_{d'-1})s_{i_d}$ on the right by $s_{i_d}$ to obtain the desired result.
The proof of the statement regarding non-redundant $s_{i_1}$ is similar with everything mirrored.
\end{proof}

\begin{example}
Since  $\delta (1,1,2,1) = \delta (1,2,1)$ with the rightmost letter in each  non-redundant and having the same value in both, this implies $\delta (1,1,2) = \delta (1,2)$.
\end{example}

Next we give a relaxation  of the notion of deletion pair that will be used extensively in upcoming proofs later in this section. 

\begin{defn} 
In $(i_1,\dots ,i_d)$ we say that $i_r$ and $i_s$ are  
{\it  deletion partners},  
 if   they 
%
%
%
become a deletion pair after replacing $(i_r,\dots ,i_{s-1})$ by a reduced  subword  having $i_r$ as its leftmost letter 
which  
  has the same Demazure 
product as $(i_r,\dots ,i_{s-1})$., 
\end{defn}

\begin{example}
In type A,  the first and fifth  
letters in $(1,2,2,1,2)$ 
are deletion partners, but they are not a deletion pair.   The second and fifth letters are neither deletion partners nor a deletion pair.   The first and fifth letters
in $(1,1,1,1,1)$ 
are  deletion partners but not a deletion pair. 
\end{example}


\commenth{I changed the justification in the next remark since the old justification did not seem like the right way to explain this fact.  The old explanation remains commented
out.}

\begin{remark}\label{last-letter-shortens}
Notice that   $w = \delta (i_1,\dots ,i_d) $ implies  $l(ws_{i_d}) < l(w)$ by virtue of  $w$ having 
a reduced expression whose rightmost letter is $s_{i_d}$; such a reduced expression 
may be obtained by proceeding from right to left through $(i_1,\dots ,i_d)$ including each 
simple reflection such that multiplying on the left by this simple reflection 
does not reduce the Coxeter-theoretic length  of the Coxeter group  element obtained so far. 
 By similar reasoning, $l(s_{i_1}w) < l(w)$ for 
$w= \delta (i_1,\dots ,i_d)$.  
\end{remark}

Next are results  regarding $f_{(i_1,\dots ,i_d)}$ in the case where $i_1$ is non-redundant.  These 
 will  
enable  us  inductively to   carry out a reduction 
 from  
$(i_1,\dots ,i_d)$  to a smaller problem given by the shorter word 
$(i_2,\dots ,i_d)$  in the case  when  $i_1$ is non-redundant.

\begin{lemma}\label{last-letter-determined}
Consider any  $w\in W$,  any  word $(i_1,\dots  ,i_d)$ having $\delta (i_1,\dots ,i_d) = w$, and  any $p\in U(w)$.  The letter $ i_{1}$ 
 is non-redundant
in the word $(i_1,\dots ,i_{d}) $
if and only if there is a unique positive value for $t_{1}$ 
among the points 
$(t_1,\dots ,t_d)\in f_{(i_1,\dots ,i_{d})}^{-1}(p)\cap  \RR_{\ge 0}^d$. 

Likewise $i_d$ is non-redundant in the word $(i_1,\dots ,i_d) $ if and only if there is a unique positive value for $t_d$  among the points 
$(t_1,\dots ,t_d) \in f_{(i_1,\dots ,i_d)}^{-1}(p)\cap \RR_{\ge 0}^d$. 
\end{lemma}

\begin{proof}
First suppose that $i_1$ is non-redundant.  Let us begin by proving the existence of a 
solution  to $f_{(i_1,\dots ,i_d)}(t_1,\dots ,t_d) = p$ with $t_1>0$. 
Non-redundancy of $i_1$ implies 
that $\delta (i_1,\dots ,i_d) =  s_{i_1} \delta (i_2,\dots ,i_{d})$. 
This allows us
to construct for  $w$ 
a reduced word
  $(i_1,i_{j_2},i_{j_3},\dots ,i_{j_{d'}})$ where $(i_{j_2},\dots ,i_{j_{d'}})$ is a 
    reduced word for $\delta (i_2,\dots ,i_d)$.  But then  Theorem ~\ref{Lu-theorem} 
gives  that 
$f_{(i_1,i_{j_2},\dots ,i_{j_{d'}})}$ is a homeomorphism from $\RR_{>0}^{d'}$ to $U(\delta (i_1,\dots ,i_d))$.  In particular,  
$f_{(i_1,i_{j_2},\dots ,i_{j_{d'}})}$ is surjective, 
implying  
there exists  $(t_{1},t_{j_2},\dots ,t_{j_{d'}})\in \RR_{>0}^{d'}$ such that
$ f_{(i_1,i_{j_2},\dots ,i_{j_{d'})}}(t_{1},t_{j_2},\dots ,t_{j_{d'}}) = p$.  
We  obtain from  such  $(t_1,t_{j_2},\dots ,t_{j_{d'}})$ 
the desired  $(t_1,\dots ,t_d)\in \RR_{\ge 0}^d$ satisfying 
$f_{(i_1,\dots ,i_d)}(t_1,\dots ,t_d)=p$  by  
 inserting  0's  into  $(t_1,t_{j_2},\dots ,t_{j_{d'}})$  at the positions of those  coordinates in the indexing set for $\RR_{\ge 0}^d$  whose index is in 
  $\{ 1,2,\dots ,d\} \setminus 
 \{ 1,j_2,\dots ,j_{d'} \}$. 
 
Next we prove uniqueness of the   positive value  for 
$t_1$ within $f_{(i_1,\dots ,i_d)}^{-1}(p)\cap \RR_{\ge 0}^d$. 
Suppose
$$%
  f_{(i_1,\dots ,i_{d})}(t_1,\dots ,t_{d}) = f_{(i_1,\dots
  ,i_{d})} (t_1',\dots , t_{d}') = p
$$
for $(t_1,\dots ,t_d), (t_1',\dots ,t_d')\in \RR_{\ge 0}^d$ satisfying  $t_1 \ne t_1'$.  
Suppose $t_{1} < t_{1}'$.  The argument below  is entirely symmetric for $t_1 > t_{1}'$, so that case is left to the reader.  Having $t_1 < t_1'$ implies 
$$x_{i_1}(t_1'-t_1) x_{i_2}(t_2')
 \cdots 
 x_{i_{d}}(t_{d}') =
x_{i_1}(-t_1) p 
 = x_{i_2}(t_2) \cdots x_{i_{d}}(t_{d}).$$ But then
  $x_{i_1}(t_1'-t_1)x_{i_2}(t_2')\cdots x_{i_d}(t_d')  
 \in U(w)$ since we must have $x_{i_2}(t_2')\cdots x_{i_d}(t_d')\in U(s_{i_1}w)$  in order to have 
 $x_{i_1}(t_1')\cdots x_{i_d}(t_d')\in U(w)$.  
 On the other hand,  we likewise must have  $x_{i_2}(t_2)\cdots  x_{i_d}(t_d) 
\in 
U(s_{i_1}w)$ by Remark ~\ref{last-letter-shortens}.  But $U(w) \cap U(s_{i_1}w) = \emptyset $,  contradicting 
the fact we have just shown that  $x_{i_1}(-t_1) p$ is in both $U(w)$ and $U(s_{i_1}w)$.  
 This rules out the possibility of having $t_1'\ne t_1$,  completing one direction. 

Turning now  to  the other direction,  suppose that $i_{1}$ is redundant.  By Remark ~\ref{Demazure-connected}, 
we may apply braid and
modified nil-moves to the subword of $(i_2,\dots ,i_{d})$ comprised of those positions with nonzero parameters 
to obtain from this   a reduced word
$(j_1,\dots ,j_{d'})$ having 
$\delta (i_2,\dots ,i_{d}) = \delta (j_1,\dots ,j_{d'})$.  Applying the change of coordinates maps of Lusztig (see  Proposition ~\ref{Lu-0th-prop}) 
 as we apply each of these moves,
 we thereby  obtain from $x_{i_1}(t_1)x_{i_2}(t_2)\cdots x_{i_d}(t_d)$ some
$x_{i_1}(t_1)x_{j_1}(t_1')x_{j_2}(t_2')\cdots x_{j_{d'}}(t_{d'}')$ also equalling $p$.  
Notice that  $i_{1}$ must be redundant within 
$(i_1,j_1,\dots ,j_{d'})$ 
since $\delta (i_2,\dots ,i_{d}) = \delta (j_1,\dots ,j_{d'})$ and since $i_1$ is redundant in $(i_1,i_2,\dots ,i_d)$; 
this combined with that fact that $(j_1,\dots ,j_{d'})$ is a reduced word yields that  $i_1$ 
must be in a deletion pair with some  $j_r$ within the word $(i_1,j_1,\dots ,j_{d'})$. 
By Theorem ~\ref{braid-connectedness},
we may apply a series of braid moves to $(j_1,\dots ,j_{r})$ to obtain  a new reduced word  $(j_1',\dots ,j_{r}')$ such that 
$j_{1}' = i_1$.   The change of coordinate maps corresponding to these braid moves  then yield some 
$$x_{i_1}(t_1)x_{j_1'}(t_2'')\cdots x_{j_{r}'}(t_r'')x_{j_{r+1}}(t_{r+1}')\cdots x_{j_{d'}}(t_{d'}')$$
also equalling $p$.
Having $j_1'=i_1$  implies that 
we may replace $t_1$ by $t_1+\epsilon $ while replacing $t_{2}''$ by $t_{2}'' - \epsilon $ 
 without changing the value of the product $p$.
 Choosing any   $\epsilon $ satisfying $0 < \epsilon < t_2'' $ will allow us to exhibit the desired 
non-uniqueness of
$t_1>0$  in  this case.  The point is that  after replacing $t_1$ by $t_1+\epsilon $ and $t_2'' $  by $t_2'' -\epsilon $, 
we may undo the series of braid moves and modified nil-moves that transformed $(i_2,\dots ,i_d)$ to $(j_1',\dots ,j_r',j_{r+1},\dots ,j_{d'})$ doing appropriate 
changes of coordinates at each step to obtain an element of $f_{(i_1,\dots ,i_d)}^{-1}(p)\cap \RR_{\ge 0}^d$ with the new value $t_1+\epsilon $ for the first 
parameter, completing our proof of non-uniqueness of the value for this first parameter.  

The proof is entirely analogous but  mirrored  for the statement involving $i_d$ and $k_d$  in place of $i_1$ and $k_1$.
\end{proof}

\begin{remark}
Lemma ~\ref{last-letter-determined} had two parts, the first of which may be regarded as a ``left'' version and the second of which is a ``right'' version 
of the same type of statement with everything mirrored.
Throughout this paper, for any result statement that  can be mirrored to give a different statement, both statements hold by simply mirroring the proof. 
In the remainder of the paper, we omit these mirrored statements, leaving those to the interested  reader to write down.  
\end{remark}

\begin{example}
Consider the word $(1,2,2)$ in type A, in which case the leftmost letter is non-redundant.  Lemma ~\ref{last-letter-determined} shows  every point 
$$(t_1,t_2,t_3)\in f_{(1,2,2)}^{-1}(x_1(3)x_2(12))\cap \RR_{\ge 0}^3$$ has $t_1=3$ since there exists a point $(3,5,7)\in f_{(1,2,2)}^{-1}(x_1(3)x_2(12))\cap \RR_{\ge 0}^3$  having $t_1=3$.

Since the rightmost letter in $(1,2,2)$ is redundant, there is not a unique value for $t_3$ in the fiber
$f_{(1,2,2)}^{-1}(x_1(3)x_2(12))\cap \RR_{\ge 0}^3$.  One may check that $t_3$ may be 
chosen to be any real number in the closed interval $[0,12]$, so is not uniquely specified.    

In our approach to giving a cell  decomposition of this fiber, 
we do not focus on the rightmost letter being redundant here but rather  utilize the fact that the middle letter in $(1,2,2)$ is redundant and 
remains redundant when we restrict to the subword $(2,2)$ having it as the leftmost letter.
This  will imply there  is more than one possible 
value for $t_2$, namely that $t_2$ can take any value in $[0,12]$.  Our cell
decomposition  
will involve choosing any value $k_2\in [0,12)$ for $t_2$   and then showing  that this uniquely determines the value for 
$t_3$ as $t_3 := 12-k_2$.  The part of $f_{(1,2,2)}^{-1}(x_1(3)x_2(12))\cap \RR_{\ge 0}^3$  that is missed  by our only considering $k_2<12$
will be exactly the points having  $t_3=0$.  Requiring $t_3=0$  effectively allows us to restrict attention to the first two parameters and  consider
the fiber  $f_{(1,2)}^{-1}(x_1(3)x_2(12))$ where we may 
apply our results for $(i_1,\dots ,i_d) = (1,2)$ to give a separate cell decomposition for the part of our fiber having  $k_2=12$ (namely the part of the fiber having $k_3=0$).  
\end{example}

A consequence of the proof of Lemma ~\ref{last-letter-determined} that we will need later is the following:

\begin{corollary}\label{unique-value-other-cell}
Consider any $w\in W$, any $p\in U(w)$, and any word $(i_1,\dots ,i_d)$ satisfying $\delta (i_1,\dots ,i_d) = w$ and having 
 $i_1$  non-redundant in $(i_1,\dots ,i_d)$. 
  If
$(k_1,\dots ,k_d)\in f_{(i_1,\dots ,i_d)}^{-1}(p) \cap \RR_{\ge 0}^d$, then 
$$x_{i_1}(-k_1)p\in U(\delta (i_2,\dots ,i_d)).$$  
Conversely,  if $x_{i_1}(-k_1)p\in 
U(\delta (i_2,\dots ,i_d))$ 
for $k_1\in \RR_{\ge 0}$,  then  there exist
$k_2,\dots ,k_d$ such that 
$(k_1,\dots ,k_d)\in f_{(i_1,\dots ,i_d)}^{-1}(p)\cap \RR_{\ge 0}^d$.   It also follows that  $k_1>0$.  
\end{corollary}

\begin{proof}
First observe that $i_1$ non-redundant in $(i_1,\dots ,i_d)$ implies $\delta (i_2,\dots ,i_d) = s_{i_1}w$.
The forward direction, namely the implication that  $(k_1,\dots ,k_d)\in f_{(i_1,\dots ,i_d)}^{-1}(p)\cap \RR_{\ge 0}^d$ implies $x_{i_1}(-k_1)p\in U(\delta (i_2,\dots ,i_d))$, is proven in the second paragraph of the proof of Lemma ~\ref{last-letter-determined}.  The reverse  direction follows  from the definition of $U(\delta (i_2,\dots ,i_d))$  in combination  with 
Theorem ~\ref{Lu-theorem} which ensures the existence of 
$k_2,\dots ,k_d$ such  that $x_{i_2}(k_2)\cdots x_{i_d}(k_d) = x_{i_1}(-k_1)p$.  The $k_1>0$ claim follows from the fact that $k_1=0$ would imply 
$p = x_{i_1}(-k_1)p$ which would contradict $U(w)\cap U(s_{i_1}w) = \emptyset$.    
\end{proof}

\begin{example}
Again consider $p = x_1(3)x_2(12) = f_{(1,2,2)}(3,5,7) \in U(s_1s_2) = U (\delta (1,2,2))$.  Since the leftmost letter in $(1,2,2)$ is non-redundant, we have 
$x_1(-3)p = x_2(12) \in U(s_2) = U(\delta (2,2))$ where 3 is the unique value $t_1$ may take within $f_{(1,2,2)}^{-1}(p)\cap \RR_{\ge 0}^3$, as predicted by Corollary ~\ref{unique-value-other-cell}.    
\end{example}

\begin{lemma}\label{full-theorem}
Consider any  $w\in W$,  any $p\in U(w)$,  and  any word $(i_1,\dots ,i_d)$  satisfying   $\delta (i_1,\dots ,i_d) = w$. 
Suppose that   $i_{1}$  
 non-redundant in $(i_1,\dots ,i_d)$, and let  $u=\delta (i_2,\dots ,i_d)$, which together  implies  $u = s_{i_1}w  < w$.  
 Let  $k_1$ 
be the unique value that $t_1$ 
 takes within
 $f_{(i_1,\dots ,i_d)}^{-1}(p)\cap \RR_{\ge 0}^d$.  
 Then   there exists $p' \in U(u)$  such that 
$f_{(i_1,\dots ,i_{d})}^{-1}(p)\cap \RR_{\ge 0}^{d}  \cong  f_{(i_2,\dots ,i_{d})}^{-1}(p')\cap \RR_{\ge 0}^{d-1}$.
More specifically,  
 $(t_1,\dots ,t_d)\mapsto (t_2,\dots ,t_{d}) $  
 gives rise to such a homeomorphism 
 with  $p' = x_{i_1}(-k_1)p$. 
%
\end{lemma}

\begin{proof}
The  nonnegative real solutions to the  equation
$$x_{i_1}(t_1)x_{i_2}(t_2)\cdots x_{i_{d}}(t_{d}) = p$$ 
all have $t_1 = k_1$ for a unique value $k_1$ by Lemma ~\ref{last-letter-determined}.  
Thus, the 
map  $(t_1,\dots ,t_d) \rightarrow (t_2,\dots ,t_{d})$  gives a bijection from  the set of nonnegative real solutions 
to the equation 
$$x_{i_1}(t_1)\cdots
x_{i_{d}}(t_{d}) = p$$
 to the set of nonnegative real solutions to the equation 
$$x_{i_2}(t_2)\cdots x_{i_{d}}(t_{d}) = p'.$$
 This   is continuous due to being a projection map.
To see that the  inverse  is also 
continuous, note that  its coordinate functions are each continuous since one is a constant map and the others are identity maps.
\end{proof}

Lemma ~\ref{full-theorem}  motivates the next definition. 

\begin{defn}\label{cf-def}
Let {\it  cf}  (short for {\it  ``change-fiber''})  be the  map taking as its input a
point $p\in U(w)$ for some $w\le \delta (i_1,\dots ,i_d)$  and an $r$-tuple of  values 
$k_1,\dots ,k_r$ for  an initial segment of  
parameters $t_1,\dots ,t_r$ for some $r\ge 1$.   The map  $cf$ 
 outputs  a  point  $q \in U(u)$ for some $u\le w$ as follows. 
Let 
$cf(p; k_1,\dots ,k_r) : =  x_{i_r}(-k_r) \cdots x_{i_1}(-k_1) p$. 
\end{defn}

In light of Lemma ~\ref{full-theorem}, the map $cf$  will be used to reduce  the problem of understanding a fiber
$f_{(i_1,\dots ,i_d)}^{-1}(p)\cap \RR_{\ge 0}^d$ in which $i_1$ is non-redundant 
 to instead studying a fiber $f_{(i_2,\dots ,i_d)}^{-1}(p')\cap \RR_{\ge 0}^{d-1}$ 
for $p' =cf(p; k_1) =  x_{i_1}(-k_1)p$  where $k_1$ is the unique value that $t_1$ takes  in $f_{(i_1,\dots ,i_d)}^{-1}(p)\cap \RR_{\ge 0}^d$.

Next we give results  pertaining to the map   $f_{(i_1,\dots ,i_d)}$  when   $i_1$  is redundant in $(i_1,\dots ,i_d)$ and more generally when
$i_l$ is redundant in 
$(i_l,\dots ,i_d)$ for some $l \in \{ 1,2,\dots ,d\} $.  

\begin{lemma}\label{max-achieved}
Consider  any $w\in W$, any $p\in U(w)$, and any  word $(i_1,\dots ,i_d)$ satisfying $\delta (i_1,\dots ,i_d) = w $.  Suppose  $i_1$ 
  is redundant in $(i_1,\dots ,i_d)$.
Then there exists a largest real  value $t_1^{\max }$  
that $t_1$ 
takes within the
set of solutions $(t_1,\dots ,t_d)\in \RR_{\ge 0}^d$ to the 
equation $x_{i_1}(t_1)\cdots x_{i_d}(t_d) = p$. 
 This value   $ t_1^{\max }$  
is the 
unique nonnegative real 
number $k_1$   
such that $x_{i_1}(-k_1) p  \in U(s_{i_1}w)$.   Moreover, $t_1^{\max }$ is positive.  
\end{lemma}

\begin{proof}
First we prove existence of  $k_1\in \RR_{>0}$ satisfying $x_{i_1}(-k_1)p \in U(s_{i_1}w)$.  
 Proceed from left to right in $(i_1,\dots ,i_{d})$ deleting each deletion partner one encounters for $i_1$ in turn from the subword of $(i_1,\dots ,i_{d})$ that already has deleted from it  all of the deletion partners for $i_1$ already encountered.   This yields  a subword of $(i_1,\dots ,i_d)$ that includes $i_1$ as a 
 non-redundant letter and still has  Demazure product  $w$.   
 Now take  a subword $(i_{j_1},\dots ,i_{j_r})$  of this word obtained so far, choosing 
 $(i_{j_1},\dots ,i_{j_r})$ to be a reduced word 
 for $w$ having $j_1=1$; Remark ~\ref{subword_for_Demazure_product} explains why we can always do this.  
 This produces a new word which will have $i_1$ as non-redundant and will be  reduced.  Set the parameters of $(t_1,\dots ,t_d)$  corresponding to the deleted letters to 
 0.  This then  allows us to invoke Lemma \ref{last-letter-determined} and \ref{full-theorem}  to determine a unique 
value $k_1\ge 0$ 
 such that $x_{i_1}(-k_1)p \in U(s_{i_1}w)$. 
Since $p\in U(w)$ while  $x_{i_1}(-k_1)p \in U(s_{i_1}w)$  for this $k_1$  
and since  Proposition \ref{Lu-first-prop} gives us that  $U(w) \cap U(s_{i_1}w)=\emptyset $, the inequality 
 $p\ne x_{i_1}(-k_1)p$ follows from this.  This in turn implies $k_1\ne 0$ since $p = x_{i_1}(0)p \ne x_{i_1}(-k_1)p$.   

Next we show that any  $k_1>0$ satisfying  $x_{i_1}(-k_1)p \in U(s_{i_1}w)$  indeed comes from some $(k_1,\dots ,k_d)\in \RR_{\ge 0}^d$ 
satisfying $x_{i_1}(k_1)\cdots x_{i_d}(k_d) = p$.   
 To this end, choose  a subword  $(i_{j_1},\dots ,i_{j_r})$
of $(i_1,\dots ,i_d)$ having  $j_1 = 1$ such that $(i_{j_1},\dots ,i_{j_r})$  is a reduced 
word for $w$.  This implies  
that $(i_{j_2},\dots ,i_{j_{r}})$ is a reduced word for 
$s_{i_1}w$. 
 Setting all parameters $t_j$ for $j\not \in \{ j_1,\dots ,j_r \} $ to 0 allows us  
then  to use  Lusztig's result that $f_{(i_{j_2},\dots ,i_{j_{r}})}$ is a homeomorphism from
$\RR_{>0}^{r-1}$ to $U(s_{i_1}w)$ since $(i_{j_2},\dots ,i_{j_{r}})$ is reduced. 
 This being a homeomorphism  in particular implies surjectivity to $U(s_{i_1}w)$, hence implies  the existence of a solution 
 $(k_{j_2},\dots ,k_{j_r})$  within
 $\RR_{>0}^{r-1}$ to the equation 
$f_{(i_{j_2},\dots ,i_{j_{r}})}(t_{j_2},\dots , t_{j_{r}}) =  x_{i_1}(-k_1)p 
\in U(s_{i_1}w)$.   
Equivalently, $(k_1,k_{j_2},\dots ,k_{j_r})$ is  
a solution for $f_{(i_{j_1},\dots ,i_{j_r})}(t_{j_1},\dots ,t_{j_r}) = p$ within $\RR_{>0}^{r}$.  Finally, we take this solution for the coordinates indexed by
$\{ 1,j_2,\dots ,j_r \}$ and set all other parameters to 0 to get the desired solution $(k_1,\dots ,k_d)$  
 to $f_{(i_1,\dots ,i_d)}(t_1,\dots ,t_d) = p$ with 
$(k_1,\dots ,k_d)\in \RR_{\ge 0}^d$.  

Given any  $k_1>0$ such that $x_{i_1}(-k_1)p \in U(s_{i_1}w)$, suppose there is also some  $k_1'>  k_1 $  with 
$x_{i_1}(-k_1')p \in U(s_{i_1}w)$.    Let  $(i_{j_2},\dots ,i_{j_r})$ be a  subword of $(i_2,\dots ,i_d)$ that is a reduced word for $s_{i_1}w$, as constructed in the last 
paragraph.  We justified there that $x_{i_1}(-k_1)p\in U(s_{i_1}w)$ implies the existence of $(k_2,\dots ,k_r)\in \RR_{>0}^{r-1}$ such that 
$f_{(i_1,i_{j_2},\dots ,i_{j_r})}(k_1,k_2,\dots ,k_r) = p$.  But then since we also are assuming we  have $x_{i_1}(-k_1')p\in U(s_{i_1}w)$, this likewise implies the 
existence of $(k_2',\dots ,k_r')\in \RR_{>0}^{r-1}$  such that 
$f_{(i_1,i_{j_2},\dots ,i_{j_r})}(k_1',k_2',\dots ,k_r')=p$.  Since $k_1\ne k_1'$ with both positive, the existence of both $(k_1,k_2,\dots ,k_r)$ and $(k_1',k_2',\dots ,k_r')$  
shows that $f_{(i_1,i_{j_2},\dots ,i_{j_r})}$ is not an injective map from $\RR_{>0}^r$ to $U(w)$.  But since  
$(i_1,i_{j_2},\dots ,i_{j_r})$ is a reduced word for $w$, Lusztig showed that 
$f_{(i_1,i_{j_2},\dots ,i_{j_r})}$ is a homeomorphism from $\RR_{>0}^r$ to $U(w)$, hence is injective.  
This gives a contradiction, completing our proof of the uniqueness of 
$k_1>0$ satisfying $x_{i_1}(-k_1)p\in U(s_{i_1}w)$.
\end{proof}

\begin{example}
Consider the word $(1,1,2)$ with $\delta (1,1,2) = s_1s_2$ and the point $p = x_1(3)x_1(5)x_2(7)\in U(s_1s_2)$.  Since the leftmost letter in $(1,1,2)$ is redundant, Lemma
~\ref{max-achieved} shows that 
the set of points $(t_1,t_2,t_3)\in f_{(1,1,2)}^{-1}(p)\cap \RR_{\ge 0}^3$ all must  satisfy $0\le t_1\le  t_1^{\max } = 3+5$.  Also note  that
$x_1(-8)p\in U( s_1s_1s_2) = U(s_2)$, as predicted by the second to last sentence in Lemma ~\ref{max-achieved}.  
\end{example}

\commenth{I just added  a hypothesis that
$cf(p;k_1,\dots ,i_{l-1}) \in U(\delta (i_l,\dots ,i_d))$ in the next corollary, since we do show that holds and since we need something like that for Lemma ~\ref{max-achieved} to 
be applicable.  I also strengthened the statement to say that this value is positive, which now follows.}

\begin{cor}\label{any-l}
Consider   any $w\in W$, any $p\in U(w)$ and any  word $(i_1,\dots ,i_d)$ with $\delta (i_1,\dots ,i_d) = w$.  Suppose  $i_l$ 
  is redundant in $(i_l,\dots ,i_d)$ for some $l\ge 1$.  If   $k_1,\dots ,k_{l-1}\in \RR_{\ge 0}^{l-1}$  satisfy 
 $cf(p;k_1,\dots ,k_{l-1})\in U(\delta (i_l,\dots ,i_d))$,  
then there exists a largest  nonnegative  real  value, denoted $t_l^{\max }(k_1,\dots ,k_{l-1})$,  
that $t_l$ 
takes within 
$$f_{(i_1,\dots ,i_d)}^{-1}(p)\cap \{ (k_1,\dots ,k_{l-1},t_l,t_{l+1},\dots ,t_d)| t_l,t_{l+1},\dots ,t_d\ge 0\} .$$  Moreover, $t_l^{\max }(k_1,\dots ,k_{l-1})>0$. 
\end{cor}

\begin{proof}
We replace  $p$ with $cf(p;k_1,\dots ,k_{l-1})$ and replace $(i_1,\dots ,i_d)$ with 
$(i_l,\dots ,i_d)$ so as to make $i_l$  the leftmost letter, then apply Lemma ~\ref{max-achieved}. 
\end{proof}

\commenth{I just added the next corollary and also added a citation of it.}

\begin{cor}\label{stay-in-strata}
Consider  any $w\in W$, any $p\in U(w)$, and any word $(i_1,\dots ,i_d)$  with $\delta (i_1,\dots ,i_d) = w$.  
 If   $i_1$ 
  is redundant in $(i_1,\dots ,i_d)$, then $x_{i_1}(-k_1)p\in U(w)$ for  $0\le  k_1<t_1^{\max }$.
\end{cor}

\begin{proof}
Note that $$x_{i_1}(-k_1)p = x_{i_1}(t_1^{\max }-k_1)x_{i_1}(-t_1^{\max }) p = x_{i_1}(t_1^{\max }-k_1) p'$$ for some $p'\in U(s_{i_1}w)$ by Lemma
~\ref{max-achieved}.  But
$t_1^{\max } - k_1 > 0$, implying $x_{i_1}(t_1^{\max }-k_1)p' \in  U(s_{i_1}s_{i_1}w) = 
U(w)$ since $l(w)>l(s_{i_1}w)$.  
\end{proof}

\begin{lemma}\label{can-tune-down-parameters}
Consider any $w\in W$, any $p\in U(w)$,  any   word $(i_1,\dots ,i_d)$ with $\delta (i_1,\dots ,i_d) =w $, and  
any $(t_1,\dots ,t_d) \in f_{(i_1,\dots ,i_d)}^{-1}(p)\cap \RR_{\ge 0}^d$. 
If   $i_l$ is redundant in $(i_l,\dots ,i_d)$ for some $l\ge 1$, then   
 there exists
 $t_{l+1}',\dots ,t_{d}'$ such  that $$(t_1,\dots ,t_{l-1},t_l',t_{l+1}',\dots , t_d') \in f_{(i_1,\dots ,i_d)}^{-1}(p)\cap \RR_{\ge 0}^d$$  for 
 each choice of $t_l'$ satisfying $0\le t_l' < t_l$.
\end{lemma}

\begin{proof}
 Let $A = x_{i_1}(t_1)\cdots x_{i_{l-1}}(t_{l-1})$ and let $B = x_{i_{l+1}}(t_{l+1})\cdots x_{i_{d}}(t_{d})$.  Then 
$$p = A x_{i_l}(t_l) B = A x_{i_l}(t_l') x_{i_l}(t_l-t_l') B$$ for any $0\le t_l' \le t_l$.  
By Remark ~\ref{Demazure-connected}, 
we may apply braid moves and modified nil-moves to $x_{i_{l+1}}\cdots x_{i_{d}}$ to produce a reduced expression
$x_{j_{l+1}}\cdots x_{j_{d'}}$.  There must then exist parameters $(t_{l+1}',\dots ,t_{d'}')$ with 
$x_{j_{l+1}}(t_{l+1}')\cdots x_{j_{d'}}(t_{d'}') = B$.   The redundancy of $i_l$ within $(i_l,\dots ,i_d)$ 
 together with reducedness 
of  $(j_{l+1},\dots ,j_{d'})$  implies that $x_{i_l}$ forms a deletion pair with some letter 
$x_{j_s}$ within  $x_{i_l}x_{j_{l+1}}\cdots x_{j_{d'}}$.  But then we may apply 
braid moves to $x_{j_{l+1}}\cdots x_{j_s}$ yielding $x_{j_{l+1}'}\cdots x_{j_s'}$ with 
$j_{l+1}' = i_l$ by virtue of Theorem ~\ref{braid-connectedness}.  This implies the
existence of parameters $u_{l+1},\dots ,u_s$ with 
$$x_{j_{l+1}'}(u_{l+1})\cdots x_{j_s'}(u_s) = x_{j_{l+1}}(t_{l+1}') \cdots x_{j_s}(t_s').$$
Thus, we have 
$$p  
= Ax_{i_l}(t_l') x_{j_{l+1}'}(t_l-t_l' + u_{l+1}) x_{j_{l+2}'}(u_{l+2})\cdots x_{j_s'}(u_s)x_{j_{s+1}}(t_{s+1}')\cdots x_{j_{d'}}(t_{d'}').$$ 
  In particular, this exhibits the existence of a suitable choice of values for the  parameters when $t_l$ is replaced by any $t_l'$ satisfying $0\le t_l'\le t_l$; this existence may be verified by using the fact that  the braid and modified-nil moves we used  may  all 
  be reversed with appropriate changes of coordinates in the parameters for each move.  
\end{proof}

\begin{example}
Since $(4,2)\in f_{(1,1)}^{-1}(p)$ for $p = x_1(6)$,  Lemma ~\ref{can-tune-down-parameters} guarantees the existence of 
$(t_1,t_2)\in f_{(1,1)}^{-1}(p)\cap \RR_{\ge 0}^2$ for each choice of $t_1\in [0,4]$.  For example, for $t_1=3$ we have 
$(3,1)\in f_{(1,1)}^{-1}(p)\cap \RR_{\ge 0}^2$.  In this example, the quantity $t_1^{\max }$  
 equals $6$.
\end{example}

Combining Lemma ~\ref{max-achieved}, Corollary  ~\ref{any-l}, 
and  Lemma ~\ref{can-tune-down-parameters} yields 
the following result.  

\begin{cor}\label{closed-interval}
Consider any $w\in W$, any $p\in U(w)$,  and any $(i_1,\dots ,i_d)$ with  $\delta (i_1,\dots ,i_d)=w$. 

Let  $i_1$ 
be redundant in $(i_1,\dots ,i_d)$, and consider the set  $V$  of values   $t_1$ takes within 
the set of solutions to $f_{(i_1,\dots ,i_d)}(t_1,\dots ,t_d) = p$  with   $(t_1,\dots ,t_d)\in \RR_{\ge 0}^d$.  
Then   $V= 
[0,t_1^{\max }]$ for  some 
  $t_1^{\max } \in \RR_{>0}$.   
  
  More generally, let $i_l$ be redundant in $(i_l,\dots ,i_d)$ for $l\ge 1$,
  let  $k_1,\dots ,k_{l-1}$ be  fixed  
  nonnegative constants chosen so that $cf(p;k_1,\dots ,k_{l-1})\in U(\delta (i_l,\dots ,i_d))$, 
   and  
  let  $V$ denote the set of values   $t_l$ takes within
  the set of nonnegative real  solutions to $$f_{(i_1,\dots ,i_d)}(k_1,\dots ,k_{l-1},t_l,\dots ,t_d) = p.$$ 
   Then $V = [0,t_l^{\max }(k_1,\dots ,k_{l-1})]$ for  some  
  $t_l^{\max }(k_1,\dots ,k_{l-1})\in \RR_{> 0}$.  
  %
 %
\end{cor}

This allows us to establish Definition ~\ref{t-j-max} (parts of which we already defined).

\begin{defn}\label{t-j-max}
Consider any $w\in W$, any point $p\in U(w)$ and any  word $(i_1,\dots ,i_d)$ with  $\delta (i_1,\dots ,i_d)=w$. 
  For each $j\in \{ 1,\dots ,d\} $ such that 
$i_j$ is redundant in $(i_j,\dots ,i_d)$ and each $k_1,\dots ,k_{j-1}\in \RR_{\ge 0}$ such that 
$cf(p;k_1,\dots ,k_{j-1})) \in U(\delta (i_j,\dots ,i_d))$, 
let  
$t_j^{\max }(k_1,\dots, k_{j-1})$ 
be  the largest value  $t_j$ 
takes  within  the set: 
$$ f_{(i_1,\dots ,i_d)}^{-1}(p) \cap  
 \{ (t_1,\dots ,t_d)\in \RR_{\ge 0}^d 
\mid t_i = k_i 
\hspace{.05in} {\rm for } \hspace{.05in} i< j\}. $$  
%

We also sometimes will speak of $t_j^{\max }$ 
without listing parameters $k_1,\dots ,  k_{j-1}$, 
in which case 
we are regarding  $t_j^{\max }$ 
as  a function from $ \RR_{\ge  0}^{j-1}$ 
to $\RR_{> 0}\cup \{ \emptyset \} $. 


It will also be convenient sometimes to  speak of $t_j^{\max }$ as a function of only  those parameters $t_{j_l}$  with $j_l<j$  
such that   $i_{j_l}$ is redundant in 
$(i_{j_l},\dots ,i_d)$, which is justified by the consequence of   Lemma ~\ref{last-letter-determined} 
that  the  values of these parameters will determine the values of all of the other parameters to the left of $t_j$.
\end{defn}

\begin{remark}
 It is unfortunate that the parameters $t_j$  such that the letter
$i_j$ is redundant in $(i_j,\dots ,i_d)$, namely the parameters that one might be tempted to call  ``redundant parameters'', are exactly the ones that we will use to 
parametrize the points in the fiber
$f_{(i_1,\dots ,i_d)}^{-1}(p)\cap \RR_{\ge 0}^d$ whereas the parameters given by the letters $i_j$ that are  non-redundant in $(i_j,\dots ,i_d)$
are the ones that will not be used in our parametrization, since these will be  functions of the redundant parameters.  
Definition ~\ref{t-j-max} 
is one of numerous places where one might expect the roles  of redundant and non-redundant parameters to be reversed based on nomenclature.  
\end{remark}

Next we give a useful    alternate  characterization  
for the set $S\subseteq \{ 1,2,\dots ,d\} $ of indices such that  $S^C := \{ 1,2,\dots ,d\} \setminus S$  
indexes the rightmost subword of $(i_1,\dots ,i_d)$ that is a reduced word for $\delta (i_1,\dots ,i_d)$.

\begin{lemma}\label{other-char}
Consider $w\in W$ and   $(i_1,\dots ,i_d)$ with 
$\delta (i_1,\dots ,i_d) = w$.  
Let  $S $ be the subset of $ \{ 1,\dots ,d \} $ whose complement 
$S^C = \{ j_1',\dots ,j_{d-s}' \} $ with $j_1'<\cdots < j_{d-s}'$  indexes the rightmost 
subword 
 of $(i_1,\dots ,i_d)$ 
that is a reduced word for $w$.   
Then $S$ 
has the following 
 characterization:
$ j \in S$ if and only if $i_j$ is redundant in  
 $(i_{j},i_{j+1},\dots ,i_d)$. 
\end{lemma}

\begin{proof}
By definition,
$i_j$ is redundant in $(i_j,\dots ,i_d)$  if and only if $\delta (i_j,\dots ,i_d) = \delta (i_{j+1},\dots ,i_d)$.  This holds if and only if multiplying $\delta (i_{j+1},\dots ,i_d)$ on the left by $s_{i_j}$ decreases Coxeter-theoretic length.   But this implies that if  we proceed from right to left creating a word comprised of  
exactly those letters indexed by $S^C$, each inserted letter increases the Coxeter-theoretic length.  
Moreover,  the end result once we reach the leftmost letter is a word with Demazure product $w$, implying  that at each step we must  
have a word  with Demazure product that is less than or equal to $w$ in Bruhat order.   
This constructively 
shows that the elements of $S^C$ index a  subword of $(i_1,\dots ,i_d)$ that is a reduced word for $w$.  By our greedy construction, the first place encountered from
right to left where any other subword  $Q'$  that is a reduced word for $w$ differs from our chosen word  indexed by $S^C$ must be a position included in $S^C$ and omitted from 
$Q'$.  This 
completes our proof of
the  claim that we 
have chosen the rightmost subword of $(i_1,\dots ,i_d)$  that is a reduced word for $w$.  
\end{proof}

\begin{example}
In type A we have  $\delta (1,1,2,4,2,1,2,1,5,2) = s_1s_2s_1s_4s_5$.  The rightmost reduced word for $s_1s_2s_1s_4s_5$ appearing as a subword of 
$(1,1,2,4,2,1,2,1,5,2)$ is $(-,-,-,4,-,-,2,1,5,2)$.  Thus,  $S^C = \{ 4,7,8,9,10\} \subseteq \{ 1,2,\dots ,10\} $ in this case.  Indeed the  subword 
$(2,1,5,2)$ comprising the  four rightmost positions is reduced whereas multiplying  $\delta (2,1,5,2)$ on the left by $s_1$ decreases Coxeter-theoretic length which means
that the leftmost letter in $(1,2,1,5,2)$ is redundant.
\end{example}

Throughout the remainder of this section, when considering a fiber $f^{-1}_{(i_1,\dots ,i_d)}(p)\cap \RR_{\ge 0}^{d}$ we will consistently use   
$S = \{ j_1,\dots ,j_s\}  \subseteq \{ 1,2,\dots ,d\}$ 
to  denote the set of indices for those letters
$i_j$ such that $i_j$ is non-redundant in the word $(i_j,\dots ,i_d)$,  and we consistently denote by 
 $S^C = \{ j_1',\dots ,j_{d-s}' \} \subseteq \{ 1,2,\dots ,d\} $  the complementary set of indices, the latter of which we have 
just shown is exactly  the indices such that $(i_{j_1'},\dots ,i_{j_{d-s}'})$ is the rightmost subword of $(i_1,\dots ,i_d)$ that is a reduced word for $\delta (i_1,\dots ,i_d)$.   
More generally,  we will  use the notation $S= \{ j_1,\dots ,j_s\} $ and $S^C  = \{ j_1',\dots ,j_{|Q|-s}' \} $ 
when $(i_1,\dots ,i_d)$ is replaced by a subword $Q$ with $\delta (Q) = \delta (i_1,\dots ,i_d)$, 
in which case $S^C$ will index the rightmost subword of $Q$ that is a reduced word
for $\delta (Q)$ and  $S$ will  equal $supp(Q) \setminus S^C$.  

\begin{lemma}\label{redundant-non-max-which-delta}
Consider any $w\in W$, any  point  $p\in U(w)$, and any word $(i_1,\dots ,i_d)$ satisfying $\delta (i_1,\dots ,i_d)=w$.  
 If  $i_1$ is redundant in $(i_1,\dots ,i_d)$ and 
$(k_1,\dots ,k_d)\in f_{(i_1,\dots ,i_d)}^{-1}(p)\cap \RR_{\ge 0}^d$ such that $k_1 < t_1^{\max }$, then $cf(p;k_1)\in U(\delta (i_2,\dots ,i_d))$.   
If $i_1$ is non-redundant in $(i_1,\dots ,i_d)$ and $k_1$ is the unique nonnegative real number such that there exists
$(k_1,k_2,\dots ,k_d)\in f^{-1}_{(i_1,\dots ,i_d)}(p)\cap \RR_{\ge 0}^d $, then $cf(p;k_1)\in U(\delta (i_2,\dots ,i_d))$.  

More generally, 
if $(k_1,\dots ,k_d)\in f_{(i_1,\dots ,i_d)}^{-1}(p)\cap \RR_{\ge 0}^d$ 
satisfies $k_l < t_l^{\max }(k_1,\dots ,k_{l-1})$ for each $l\in S$, then 
$cf(p;k_1,\dots ,k_j) \in U(\delta (i_{j+1},\dots ,i_d))$ for $1\le j\le d-1$.
\end{lemma}

\begin{proof}
We showed in Lemma ~\ref{max-achieved}  that $cf(p;k_1)\not \in U(s_{i_1}w)$
 in the case of 
$i_1$ redundant  in $(i_1,\dots ,i_d)$ and $k_1 < t_1^{\max }$. 
By Corollary ~\ref{stay-in-strata}, this implies  $cf(p;k_1)\in U(w)$ in this case.
But  $i_1$  redundant in $(i_1,\dots ,i_d)$  implies 
$\delta (i_2,\dots ,i_d) = \delta (i_1,i_2,\dots ,i_d)=w$.
When  $i_1$ is non-redundant in $(i_1,\dots ,i_d)$, then 
$cf(p;k_1)\in U(s_{i_1}w)$.  But  $s_{i_1}w = \delta (i_2,\dots ,i_d)$ in this case.  This shows 
$cf(p;k_1) \in U(\delta (i_2,\dots ,i_d))$ in either case.

Combining these two cases  allows us to deduce the more general  
final claim by induction on $j$ by  proceeding from  left to right through the first $j$ letters, 
repeatedly applying whichever of the two cases is pertinent for the letter under consideration; which case applies when $i_l$ is the leftmost letter 
is determined by 
whether the letter  $i_l$ under consideration  is redundant or non-redundant within
the word  $(i_l,\dots ,i_d)$. 
\end{proof}

\begin{example}
Consider the word $(i_1,\dots ,i_d) = (1,1,1)$ in type A  with $\delta (1,1,1) = s_1$ and consider $p = x_1(7) \in U(s_1)$.  Notice that the middle letter $i_2$ is redundant in this case.  We have 
$t_1^{\max } = 7$.   If   $k_1 = 4$,  then $t_2^{\max }(4) = 3$ which indeed is strictly  greater than 0, as desired.  In this case
$cf(p; 4) = x_1(-4)p = x_1(3)\in U(s_1)= U(\delta (1,1))$.  But then 
$x_1(-t_2^{\max }(4)) cf(p; 4) = x_1(-3)x_1(3) = x_1(0) \in U(e) = U(s_1 \delta (1,1))\ne U(\delta (1))$.  
\end{example}

\begin{lemma}\label{any-l-positive}
Consider any $w\in W$, any $p\in U(w)$, and  any  word $(i_1,\dots ,i_d)$ with $w=\delta (i_1,\dots ,i_d)$. 
 Suppose  $i_l$ is redundant in $(i_l,\dots ,i_d)$,  which implies   $l\in S$.
Consider $(k_1,\dots ,k_{l-1})\in \RR_{\ge 0}^{l-1}$ such that 
$$f_{(i_1,\dots ,i_d)}(k_1,\dots ,k_{l-1},t_l,\dots ,t_d) = p$$ has a solution with $t_l,\dots ,t_d\ge 0$.  
If $k_j < t_j^{\max }(k_1,\dots ,k_{j-1})$ for each $j<l$ such that  $j\in S$, then
$t_l^{\max }(k_1,\dots ,k_{l-1})>0$. 
In this case,  
$t_l^{\max }(k_1,\dots ,k_{l-1})$  is 
the  unique positive   real 
number   $k_l$  such that 
$x_{i_l}(-k_l) cf(p;k_1,\dots ,k_{l-1}) 
 \in U(s_{i_l}\delta(i_l,\dots ,i_d))$. 
\end{lemma}

\begin{proof}
Suppose $l>1$.  If the leftmost parameter is non-redundant,  
Corollary ~\ref{unique-value-other-cell} allows us to replace $p$ by $cf(p;k_1)$, replace $(i_1,\dots ,i_d)$ by $(i_2,\dots ,i_d)$ 
and eliminate the leftmost parameter.  
If the leftmost parameter is redundant, we may again 
replace $p$ by $cf(p;k_1)$ and again replace $(i_1,\dots ,i_d)$ with $(i_2,\dots ,i_d)$ and eliminate the leftmost parameter.
We repeatedly apply Lemma  ~\ref{redundant-non-max-which-delta}
as we proceed  from left to right eliminating 
parameters in this manner 
to get that $cf(p;k_1,\dots ,k_{l-1})\in U(\delta (i_l,i_{l+1},\dots ,i_d))$.  But then we have reduced both 
the positivity claim and the claim that $cf(p;k_1,\dots ,k_l)\in U(s_{i_l}\delta (i_l,\dots ,i_d)) $  to case in which $l=1$, namely the case 
of the  leftmost position.
But this is handled in Lemma ~\ref{max-achieved}.  
\end{proof}

\begin{lemma}\label{existence-by-induction} 
Consider any $w\in W$, any $p\in U(w)$, and any   $(i_1,\dots ,i_d)$ with $\delta (i_1,\dots ,i_d)=w$. 
Given $j\in S^C$, implying $i_j$ is non-redundant in $(i_j,\dots ,i_d)$,
and given any $k_1,\dots ,k_{j-1}\ge 0$ chosen so that $cf(p;k_1,\dots ,k_{j-1} ) \in U(\delta (i_{j},\dots ,i_d))$ for each $l\ge j-1$,
then  there exists a unique $k_j\ge 0$ such that $cf(p;k_1,\dots ,k_j)\in U(\delta (i_{j+1},\dots ,i_d))$.  
This value $k_j$ is positive with 
$$f_{(i_1,\dots ,i_d)}^{-1}(p)  
\cap \{ (k_1,\dots ,k_j,t_{j+1},\dots ,t_d) |  t_{j+1},\dots , t_d\ge 0\} \ne \emptyset  .$$ 
\end{lemma}

\begin{proof}
This is a direct consequence of Lemma ~\ref{last-letter-determined}, by virtue of the fact justified in  Lemma  ~\ref{full-theorem} 
that  
$$\{(t_{j+1},\dots ,t_d)\in \RR_{\ge 0}^{d-j} |( k_1,\dots ,k_j,t_{j+1},\dots ,t_d)\in f_{(i_1,\dots ,i_d)}^{-1}(p)\} = 
\RR_{\ge 0}^{d-j} \cap f_{(i_{j+1},\dots ,i_d)}^{-1}(p')$$ for $p' = cf(p;k_1,\dots ,k_j).$
\end{proof}

The upcoming Lemma ~\ref{generalized-Lusztig-homeom} together with Lemmas ~\ref{defined-forced-value}  and  ~\ref{0-iff-max-to-left}   will  soon allow us to define 
a map  $f_{F_Q}$  for each subword $Q$ of $(i_1,\dots ,i_d)$ having $\delta (Q) = \delta (i_1,\dots ,i_d)$, doing so in a way 
that will restrict to our  desired homeomorphism from an open ball to the stratum  $F_Q$.
These will  allow us  
to use the parameters indexed by the subset $S$ of $supp(Q)$ whose complement  within $supp(Q)$ 
indexes the rightmost reduced word for $w$ within  $Q$ 
to parametrize the points contained  in a collection of strata within  $f_{(i_1,\dots ,i_d)}^{-1}(p)\cap \RR_{\ge 0}^d$, with $F_Q$ being  one of these strata. 
To this end, we  will 
use the fact  (shown  in Lemma 
~\ref{defined-forced-value}) 
that  the parameters indexed by  $supp(Q)\setminus S$  
 are  uniquely determined
by  the  
point $p\in U(w)$ and the values of the parameters indexed by $S$.  

\begin{lemma}\label{generalized-Lusztig-homeom}
Consider any $w\in W$, and any word   $(i_1,\dots , i_d)$ with  
$\delta (i_1,\dots ,i_d)=w$.  
Consider 
$$D_{k_{j_1},\dots ,k_{j_s}}  = \{ (t_1,\dots ,t_d) \in \RR^{d}_{\ge  0} \mid  t_{j_l'}  > 0 \hspace{.03in} {\rm for}\hspace{.03in} l=1,\dots ,d-s
\hspace{.075in}{\rm and}\hspace{.075in}  
 t_{j_r} = k_{j_r}  \hspace{.03in}{\rm   for}\hspace{.03in} 1\le r\le s \}  $$
for  
any  fixed choice of constants  $k_{j_1},\dots ,  k_{j_s}\ge 0$. 
Then   $f_{(i_1,\dots ,i_d)}|_{D_{k_{j_1},\dots ,k_{j_s}}}$ is 
 a bijection  $h_{k_{j_1},\dots ,k_{j_s}}: D_{k_{j_1},\dots ,k_{j_s}} \rightarrow im(h_{k_{j_1},\dots ,k_{j_s}})\subseteq U(w)$.
\end{lemma}

\begin{proof}
  Consider any  $p\in im (h_{k_{j_1},\dots ,k_{j_s}})$.  Let us now  prove  $p = h_{k_{j_1},\dots ,k_{j_s}}(x) $ for a  unique choice of $x\in D_{k_{j_1},\dots ,k_{j_s}}$.  By definition of 
  $D_{k_{j_1},\dots ,k_{j_s}}$, this is equivalent to proving uniqueness of 
  $x|_{S^C}  = (t_1',\dots ,t_{d-s}')  \in \RR_{> 0}^{d-s}$  with $x$
  satisfying $h_{k_{j_1},\dots ,k_{j_s}}(x) = p$ for fixed $p\in im(h_{k_{j_1},\dots ,k_{j_s}})$. 
   To this end, we proceed from left to right through all of the parameters $t_1,\dots ,t_d$ appearing in $x$ 
   showing that the value of each such  parameter  $t_i$  in turn  is uniquely determined by the point $p$ and the values of the  parameters 
 $ t_1, \dots ,t_{i-1}$ to its left, doing this for every $t_i$ regardless of whether $i\in S$ or $i\in S^C$. 

First let us show that $t_1$ is uniquely determined, noting that there does indeed exist at least one valid choice for $t_1$ by virtue of $p$ being in $im(h_{k_{j_1},\dots ,k_{j_s}})$.  
  If $j_1=1$, then our restriction to domain $D_{k_{j_1},\dots ,k_{j_s}}$ necessitates  $t_{j_1} = k_{j_1}$.
  On the other hand, if $j_1\ne 1$, then $1\in S^C$.  But then by Lemma ~\ref{other-char}, this  implies that  $i_1$ is non-redundant in 
$(i_1,\dots ,i_d)$.  Lemma ~\ref{last-letter-determined} then guarantees there is a unique value $k_1$ for $t_1$ such that there 
exists a point $(k_1,t_2,\dots ,t_d) \in \RR_{\ge 0}^d$ with $f_{(i_1,\dots ,i_d)}(k_1,t_2,\dots ,t_d) = p$.  
Our choice of $p\in im (h_{k_{j_1},\dots ,k_{j_s}})$ then ensures
we may choose this  $t_2,\dots ,t_d)$ in such a way have $(k_1,t_2,\dots ,t_d)  \in D_{k_{j_1},\dots ,k_{j_s}}$. 

Next we show how to proceed from left to right through the parameters $t_2,t_3,\dots ,t_d$,  proving each  such $t_r$  takes a unique value $k_r$  given that the values for $t_1,\dots ,t_{r-1}$ have already been shown to be uniquely determined with values we call  $k_1,\dots ,k_{r-1}$.  
To this end, observe 
 that 
the equation 
$$x_{i_1}(k_1)\cdots x_{i_{r-1}}(k_{r-1}) x_{i_r}(t_r)\cdots x_{i_d}(t_d) = p$$ has exactly the same nonnegative real solutions for $t_r,\dots ,t_d$ as the equation
$$x_{i_r}(t_r)\cdots x_{i_d}(t_d) = x_{i_{r-1}}(-k_{r-1})\cdots x_{i_1}(-k_1)p.$$  
In particular, once $t_1$ is set to  its uniquely determined  value $k_1$, this 
reduces the task of uniquely determining $t_2$ to the case we just handled of uniquely determining  the value of the leftmost parameter.  
Applying this reduction repeatedly, we  can likewise make each parameter $t_r$  
in turn   leftmost to deduce uniqueness of the value of that  parameter from our argument for the leftmost parameter.
In this manner,  we deduce injectivity of $h_{k_{j_1},\dots ,k_{j_s}}$.  
Surjectivity of $h_{k_{j_1},\dots ,k_{j_s}}$  onto its image   
follows tautologically.
This completes the proof of 
 bijectivity of $h_{k_{j_1},\dots ,k_{j_s}}$ as a  map   from $D_{k_{j_1},\dots ,k_{j_s}}$  to $im(h_{k_{j_1},\dots ,k_{j_s}})$. 
\end{proof}

The proof of Lemma ~\ref{generalized-Lusztig-homeom} yields the following  useful fact:

\begin{cor}\label{early-f-j}
The nonnegative real value of each parameter $t_j$ for $j\in S^C$ is uniquely  determined
by $p\in U(w)$ together with  the 
values of those parameters $t_{j_r}$ satisfying both  $j_r\in S$ and  $j_r<j$, provided that 
these parameters $t_{j_r}$ all are assigned  nonnegative  
values and 
that a nonnegative real value for $t_j$ exists for this  choice of values for the parameters  $t_{j_r}$ with $j_r\in S$ and $j_r<j$.  
 \end{cor}
 
 \begin{example}
 Consider $(i_1,\dots ,i_d) = (1,1,2,2)$ and $p = x_1(17)x_2(5)$.  Since the righmost reduced word for $s_1s_2$ within $(1,1,2,2)$ uses positions 
 2 and 4, we have $S^C = \{ 2,4\} $ and   $S = \{ 1,3 \} $. 
 Setting $k_1=7$ and $k_3=1$ for  
 $(k_1,k_2,k_3,k_4)\in f_{(1,1,2,2)}^{-1}(p)\cap \RR_{\ge 0}^4$  for $p=x_1(17)x_2(5)$ forces  $k_2 = 10$ and $k_4 = 4$. 
 \end{example}

In  Definition ~\ref{f-j-def},  
we  introduce 
 a function $f_j$ for each $j \in \{ 1,\dots ,d\} $ such that $i_j$ is non-redundant in $(i_j,\dots ,i_d)$, formalizing 
the idea from Corollary ~\ref{early-f-j}.  That is, for $j\in S^C$ the function   $f_j$  
will  express the  value for $t_j$ for $j\in S^C$ as a function of 
  the values of the  parameters  to the  left of $t_j$ which have indices in $S$.  
We define  $f_j$ on a sufficiently restrictive domain so 
that  a suitable value indeed provably will   exist  for $t_j$, namely on the domain $D^{<\max }_{S,r-1}$ introduced next, letting
 $r-1 = |S\cap \{ 1,2,\dots ,j-1\} |$. 

\begin{defn}\label{useful-domain-def}
Consider any word  $(i_1,\dots ,i_d) $ with $\delta (i_1,\dots ,i_d) = w$.  
Let  $S = \{ j_1,\dots ,j_{s}\}\subseteq \{ 1,\dots ,d\} $ with $j_1 < \cdots < j_s$  be the set whose complement $S^C = \{ 1,\dots ,d\} \setminus S$ 
indexes the rightmost reduced word for $w$ appearing as  a subword of  $(i_1,\dots ,i_d)$.  Let   
$$D^{<\max }_S = \{ (c_{j_1},\dots ,c_{j_s})\in \RR_{\ge 0}^{d-l(w)} \mid  c_{j_r} < t_{j_r}^{\max }(k_1,\dots ,k_{j_r-1}) \hspace{.04in}{\rm for}\hspace{.04in} 1\le r\le s \} $$
where for  each $r\in \{ 1,\dots ,s\} $ we calculate $k_1,k_2,\dots ,k_{j_r-1}$   from left to right 
as follows:  
\begin{enumerate}
\item
 For each $i\in S$ with $i<j_r$, let   $k_i=c_i$,
 \item
 For each $i\not\in S$ with $i<j_r$,  we let  $k_i$   be the unique nonnegative real value  such that 
the set of  points $(k_1,\dots ,k_i,t_{i+1},\dots ,t_d)\in f_{(i_1,\dots ,i_d)}^{-1}(p)\cap \RR_{\ge 0}^d$  
 is nonempty.  
\end{enumerate}
Likewise define for $1\le i\le s$ the domain 
$$D_{S,i}^{<\max } = \{ c_{j_1},\dots ,c_{j_i})\in \RR_{\ge 0}^i \mid  c_{j_r}<t_{j_r}^{\max }(k_1,\dots ,k_{j_r-1})\hspace{.04in}{\rm for}\hspace{.04in} 0\le r\le i\},$$
so in other words $D_{S,i}^{<\max }$ is the projection of $D_S^{<\max }$ onto its first $i$ coordinates.  

Define $D^{<\max }_{S,0} = \{ \emptyset \} $.  
\end{defn}

\begin{remark}
The domain  $D^{< \max }_S$ is 
nonempty by virtue of  Lemma ~\ref{any-l-positive}, since that  showed positivity of  
 $t_{j_r}^{\max }(k_1,\dots ,k_{j_r-1})$ for each  $r>1$  provided that 
$k_{j_l} < t_{j_l}^{\max }(k_1,\dots ,k_{j_l-1})$ for $l=1,2,\dots ,r-1$, and  it also showed positivity of $t_{j_1}^{\max }$.
\end{remark}

\begin{remark}
The non-maximality constraints in the definition of $D_S^{<\max }$ also  
give  the very useful property, made precise in Lemma ~\ref{redundant-non-max-which-delta}, 
 that changing the values of the  parameters 
 indexed by $S$ in a way that stays within the  domain $D^{<\max }_S$  will not change certain key   combinatorial structure.
\end{remark}

\begin{defn}\label{f-j-def}
Consider any  $w\in W$, any point  $p\in U(w)$, and any  word $(i_1,\dots ,i_d)$ with $\delta (i_1,\dots ,i_d)=w$.  As usual, let 
$S = \{ j_1,\dots ,j_s \} \subseteq \{ 1,2,\dots ,d\} $  be chosen so that 
$S^C$ indexes the rightmost subword of $(i_1,\dots ,i_d)$ that is a reduced word for $w$.  
Consider any   $j\in S^C$.   Let  $r-1 = |S\cap \{ 1,2,\dots ,j-1\} |$.  
 
 Define  
 $f_{j}: D^{<\max }_{S,r-1} \rightarrow \RR_{>0}$ for $r-1>0$ 
 to be   the map which sends $(k_{j_1},\dots ,k_{j_{r-1}})\in D^{<\max }_{S,r-1}$ to the unique positive real value
the parameter $t_j$ takes 
within $f_{(i_1,\dots ,i_d)}^{-1}(p) \cap \RR_{\ge 0}^d$ 
subject to the  requirements  that $t_{j_l} = k_{j_l} $ for $l=1,2,\dots ,r-1$. 

When $r-1=0$, then the input  to $f_j$ is $\emptyset $ and we let $f_j(\emptyset )$ be 
the unique positive real value the parameter $t_j$ takes within $f_{(i_1,\dots ,i_d)}^{-1}(p)\cap \RR_{\ge 0}^d$.  In other words,
$f_j(\emptyset )$ is  the unique value   the leftmost 
parameter takes in $f_{(i_j,\dots ,i_d)}^{-1}(cf(p;k_1,\dots ,k_{j-1}))\cap \RR_{\ge 0}^{d-j+1}$ where $k_1,\dots ,k_{j-1}$ are the unique values taken by the parameters
$t_1,\dots ,t_{j-1}$ for all points in  $f_{(i_1,\dots ,i_d)}^{-1}(p)\cap \RR_{\ge 0}^d$. 
\end{defn}

\begin{example}\label{specific-instance}
Let $(i_1,\dots ,i_d) = (1,2,1,2)$ with  $\delta (1,2,1,2) = s_1s_2s_1$ and  $S = \{ 1\}  \subseteq \{ 1,2,3,4\} $.
Consider a point 
$(t_1,t_2,t_3,t_4) \in f_{(1,2,1,2)}^{-1}(p)\cap \RR_{\ge 0}^4$ for $p = x_1(9)s_2(3)x_1(7)$.  If $t_1 = 4$,  then 
$(t_2,t_3,t_4)\in f_{(2,1,2)}^{-1}(x_1(-4)p)\cap \RR_{\ge 0}^3$ which 
uniquely 
determines the other values for the parameters  as $t_2 = \frac{3\cdot 7}{5+7}, t_3 = 5+7$ and 
$t_4 = \frac{5\cdot 3}{5+7}$.  In other words,  $f_2(4) = \frac{21}{12}, f_3(4) = 12$ and $f_4(4) = \frac{15}{12}$.  Similar
reasoning shows $f_2(t_1) = \frac{3\cdot 7}{16-t_1}, f_3(t_1) = 16-t_1$ and $f_4(t_1) = \frac{3(9-t_1)}{16-t_1}$.
\end{example}

\begin{lemma}\label{defined-forced-value} 
Consider any  $w\in W$, any point  $p\in U(w)$,  any word  $(i_1,\dots ,i_d)$ with $\delta (i_1,\dots ,i_d)=w$, 
and  any $j\in S^C$.    Let $r-1 = |S\cap \{ 1,2,\dots ,j-1\} |$.
The function $f_{j}$  
is a  well-defined function from $D^{< \max }_{S,r-1}$  to the positive reals.
\end{lemma}

\begin{proof}
The input to $f_j$ is a  list of values $k_{j_1},k_{j_2},\dots ,k_{j_{r-1}}$  for the parameters 
indexed by those  elements of  $S$ with index less than $j$, provided  that we have $r-1>0$.    We may therefore  use Corollary ~\ref{early-f-j} to get uniqueness of the value for $t_j$ when $r-1>0$.   We specified the unique value for 
$f_{j-1}(\emptyset )$ in the case that $r-1=0$ within Definition ~\ref{f-j-def}.  
What remains is to prove existence and  
positivity of the value $t_j$ for any  $j\in S^C$ such that $r-1>0$, or equivalently to show and positive of the value  for the leftmost parameter in 
$f_{(i_j,\dots ,i_d)}^{-1}(p_{j-1})$ for 
 $p_{j-1} = cf(p;k_1,\dots ,k_{j-1})$ for any such  $j\in S^C$.  
 
 Our choice of domain $D^{<\max }_{S,r-1}$  ensures that 
$p_{j-1} \in U(\delta (i_j,\dots ,i_d))$ by Lemma ~\ref{redundant-non-max-which-delta}. 
%
%
Thus, if we replace $(i_j,\dots ,i_d)$ by any reduced subword $Q'$ of $(i_j,\dots ,i_d)$ that contains $i_j$, then
existence, uniqueness  and  positivity  of  a value $k_j'$ for the leftmost parameter for  points in $f_{Q'}^{-1}(p_{j-1})\cap \RR_{\ge 0}^{|Q'|}$ 
follows from 
Lemma \ref{last-letter-determined} applied to $f_{Q'}^{-1}(p_{j-1})\cap \RR_{\ge 0}^{|Q'|}$.  
But since $\delta (i_j,\dots ,i_d)
= \delta (Q')$  and since $i_j$ is non-redundant in $(i_j,\dots ,i_d)$ (by definition of  $S^C$), 
this same positive  value 
$k_j'$  is the unique value for the leftmost parameter
in $f_{(i_j,\dots ,i_d)}^{-1}(p_{j-1})\cap \RR_{\ge 0}^d$, completing our proof of existence and positivity. 
\end{proof}

The next  lemma 
 pulls together  many of   our results so far  to give a useful 
description of an important  
subset of  each fiber.  We will eventually  show  that this subset of the fiber, denoted $D^{<\max }$ below, 
is homeomorphic to $[0,1)^{d-l(w)}$ and is 
a union of strata $F_Q$ for various subwords of $(i_1,\dots ,i_d)$ each satisfying $\delta (Q) = \delta (i_1,\dots ,i_d)$.

\begin{lemma} \label{0-iff-max-to-left}
Consider any $w\in W$, any  word $(i_1,\dots ,i_d)$ with  $\delta (i_1,\dots ,i_d) = w$, and  any  $p\in U(w)$.
Then  the  set 
$f^{-1}_{(i_1,\dots ,i_d)}(p) \cap \RR_{\ge 0}^d$ contains the set $D^{<\max }$ of points  $(k_1,...,k_d)\in \RR_{\ge 0}^d$  
calculated from left to right such that (1)   $k_j \in [0,t_j^{\max}(k_1,\dots ,k_{j-1}))$  
  for each  $j\in S$,  and  (2) each $k_j$ for $j \in S^C$ 
 is uniquely determined by $k_1,k_2,\dots ,k_{j-1}$ as the unique nonnegative real such that 
 $f_{(i_1,\dots ,i_d)}^{-1}(p)$ includes at least one nonnegative real point $(t_1,\dots ,t_d)$ satisfying $t_i=k_i$ for $k=1,2,\dots ,j$. 

All points in    $D^{<\max }$ satisfy    $t_j^{\max }(k_1,\dots ,k_{j-1})>0$ for each $j\in S$.  
Moreover,  the stratum $F_{(i_1,\dots ,i_d)}$ of $f_{(i_1,\dots ,i_d)}^{-1}(p)\cap \RR_{\ge 0}^d$  in which all parameters are positive is contained in $D^{<\max }$.
%
\end{lemma}

\begin{proof}
We proceed  through the  parameters from   left to right, repeatedly updating which parameter is our leftmost  undetermined parameter  as follows.   Suppose  
we have already chosen values $k_1,\dots ,k_{j-1}$ for $t_1,\dots ,t_{j-1}$ such that $cf(p;k_1,\dots, k_{j-1}) \in U(\delta (i_j,\dots ,i_d))$.
If 
 the current 
leftmost undetermined  parameter $t_j$  has associated letter $i_j$ that is non-redundant in $(i_j,\dots ,i_d)$, then 
we may apply Lemmas ~\ref{defined-forced-value} and
~\ref{existence-by-induction}  to deduce
the uniqueness, existence and positivity  of a value $k_j$ for $t_j$ such that there exists  an element of $f_{(i_1,\dots ,i_d)}^{-1}(p)\cap \RR_{\ge 0}^d$ having $t_i=k_i$ for 
$i=1,2,\dots ,j$.  But this  value $k_j$ is also the unique nonnegative real  value that the leftmost 
parameter takes  for each point in $f_{(i_j,\dots ,i_d)}^{-1}(cf(p;k_1,\dots ,k_{j-1})) \cap \RR_{\ge 0}^d$, as shown by multiplying both sides of the equation
$f_{(i_1,\dots ,i_d)}(k_1,\dots ,k_j,t_{j+1},\dots ,t_d) = p$ on the left by $x_{i_j}(-k_j)\cdots x_{i_1}(k_1)$ and then applying Lemma ~\ref{last-letter-determined}; 
this reinterpretation for $k_j$ allows us to 
apply Lemma ~\ref{full-theorem}   to deduce that $cf(p;k_1,\dots ,k_{j-1},k_j)\in U(\delta (i_{j+1},\dots ,i_d))$, validating our inductive assumption in this case.  
%
%
 Thus, if  $i_j$  is non-redundant in $(i_j,\dots ,i_d)$ we may proceed rightward  to consider leftmost undetermined parameter $t_{j+1}$. 

Whenever  we have a leftmost undetermined  parameter $t_j$  such  that $i_j$ is redundant in $(i_j,\dots ,i_d)$, 
we  invoke Corollary ~\ref{closed-interval} to deduce that there is  a closed interval  $[0, t_j^{\max }(k_1,\dots ,k_{j-1})]$ of possible values for that parameter $t_j$.  
Lemma ~\ref{any-l-positive}  guarantees $t_j^{\max }(k_1,\dots ,k_{j-1})>0$. 
This positivity implies  the existence of non-maximal values within $[0,t_j^{\max }(k_1,\dots ,k_{j-1})]$.  
When  we choose such   a non-maximal value  $k_j$ for $t_j$, 
as is required for our domain $D^{<\max }$, we may 
apply the map $cf$ again to eliminate this parameter $t_j$  and turn our attention to  the new leftmost undetermined  parameter $t_{j+1}$;
lemma  ~\ref{redundant-non-max-which-delta}  tells us  that  $cf(p; k_1,\dots ,k_j) \in U(\delta (i_{j+1},\dots ,i_d))$ in this case, validating the reduction. 
   In this manner, we proceed through all of the 
parameters from left to right by using Lemmas ~\ref{last-letter-determined} and ~\ref{full-theorem} for non-redundant parameters and Lemmas ~\ref{closed-interval} and 
~\ref{any-l-positive} for redundant parameters.  
Lemma ~\ref{other-char} shows that  the parameters $t_j$ given by  letters  $i_j$ which are redundant in 
$(i_j,\dots ,i_d)$  are exactly those parameters with indices  
in $S$, justifying our usage of  $S$ and $S^C$ in the statement of this lemma.

 The final claim is proven as follows.  Suppose some $(k_1,\dots ,k_d)\in f^{-1}_{(i_1,\dots ,i_d)}(p)\cap \RR_{\ge 0}^d$ is not in $D^{<\max }$, which means
it  has  some $j\in S$ with  $k_j = t_j^{\max }(k_1,\dots ,k_{j-1})$.  Then 
 $$cf(p;k_1,\dots ,k_j)\in U(s_{i_j}\delta (i_{j+1},\dots ,i_d)) \ne U(\delta (i_{j+1},\dots ,i_d)).$$
 But in order to have $(k_{j+1},\dots ,k_d)\in f^{-1}_{(i_{j+1},\dots ,i_d)}(cf(p;k_1,\dots ,k_j))$ we would then need to have   
 $k_l=0$ for  some  $l>j$, implying $(k_1,\dots ,k_d)\not\in F_{(i_1,\dots ,i_d)}$.
%
\end{proof}

\begin{remark}
Continuity of $f_j$, proven later in the paper in  Lemma 
~\ref{cont-forced-value}, may be combined with continuity of projection maps to show   $D^{<\max } \cong D^{<\max }_S$.
\end{remark}

Next we  define for each stratum $F_Q$ (see Definition ~\ref{F-Q-def})  of $f^{-1}_{(i_1,\dots ,i_d)}(p)\cap \RR_{\ge 0}^d$ a
map  
  $$f_{F_Q}: [0,1)^s \rightarrow f_{(i_1,\dots ,i_d)}^{-1}(p)\cap \RR_{\ge 0}^d .$$ 
Although  this will not be a surjection to all of $f_{(i_1,\dots ,i_d)}^{-1}(p)\cap \RR_{\ge 0}^d$, we will eventually prove  
that this map   $f_{F_Q}$ 
restricts to a homeomorphism  from $(0,1)^s$ to  $F_Q$,  
allowing us to deduce that each stratum $F_Q$ is a cell, i.e.,  is  homeomorphic to an open ball.

\begin{defn}\label{f-F-def}
Given any  $w\in W$, any point  $p\in U(w)$, any  word $(i_1,\dots ,i_d)$ with $w = \delta (i_1,\dots ,i_d)$, and given  any  
stratum  $F_Q \subseteq  f_{(i_1,\dots ,i_d)}^{-1}(p)\cap \RR_{\ge 0}^{d}$ satisfying  $\delta (Q) = w$,    
we define a  map 
$f_{F_Q}$ as described below.  See 
Lemma ~\ref{thm:CW-map} for  well-definedness of $f_{F_Q}$.

%
For $Q\subset \{ 1,2,\dots ,d\} $ with $\delta (Q) = \delta (i_1,\dots ,i_d)$, 
 let $|Q|$ be the size of $supp(Q)$ and let $s = |Q|-l(w)$.   
Consider  $S = \{ j_1,\dots ,j_s\} \subseteq supp(Q)$ with $1\le j_1 < \cdots < j_s \le d$ 
chosen such that the complementary set  $S^C := supp (Q) \setminus S$ is exactly
the support of 
the rightmost subword of $Q$ that is a reduced word for $w$. 

Define  $$f_{F_Q}: [0,1)^{s} \rightarrow f_{(i_1,\dots i_d)}^{-1}(p)\cap \RR_{\ge 0}^d$$ as follows.   
Given any $u_1,\dots ,u_s \in [0,1)$, let 
$f_{F_Q} (u_1,\dots ,u_{s})  =  (k_1,\dots ,k_d) $ where the values $k_1,\dots ,k_d$ for the parameters $t_1,\dots ,t_d$ are 
determined 
as follows.     For $i\not\in supp(Q)$, set $t_i=0$.  Proceeding  left to right through the remaining parameters, if the 
parameter $t_j$  currently under consideration 
has index  $j\in S^C$, then 
the value $k_j$ of the parameter $t_j$  is uniquely determined. 
If, on the other hand, the parameter currently under consideration has index $j_r\in S$, 
then 
we use  
$u_r \in [0,1)$  together with  the values $k_1,\dots ,k_{j_r-1}$  already assigned to 
$t_1,\dots ,t_{j_r-1}$  to determine the value  $k_{j_r}$ for the parameter $t_{j_r}$  by way of the formula 
$k_{j_r} = u_{r}\cdot t_{j_r}^{\max }(k_1,\dots ,k_{j_r-1})$.  
\end{defn}

\commenth{In line above $u_r$ had been incorrectly written as $u_{j_r}$, so I fixed it.  We have potentially fewer $u$ coordinates than $t$ coordinates.}

\begin{example}
Given $x_1(5) \in U(s_1)$,  given the word $(1,1,1,1)$ with $\delta (1,1,1,1) = s_1$ and given the stratum $F_Q$ for  the subword 
$ Q = (1,1,-,1)$ with support $\{ 1,2,4\} $, we have $|Q| = 3$ and $s=2$.  
In this case $S^C$ has support $\{ 4\} $. 
We get 
$f_{F_Q}(2/5,1/3) = (2,1,0,2)\in f_{(1,1,1,1)}^{-1}(x_1(5))\cap \RR_{\ge 0}^4$.     
\end{example}

Our next results show  that $f_{F_Q}$ is well-defined and is injective.

\begin{lemma}\label{thm:CW-map}
Given $w\in W$,    
$p\in U(w)$ and a word $(i_1,\dots ,i_d)$ with  $\delta (i_1,\dots ,i_d)=w$, consider any 
stratum  $F_Q$ within $f_{(i_1,\dots ,i_d)}^{-1}(p)\cap \RR_{\ge 0}^d$
 indexed by 
a subword $Q$  of $(i_1,\dots ,i_d)$ satisfying  $\delta (Q) = w$. Let $|Q|$ denote the number of letters
in  $Q$. 
Then  $f_{F_Q}$ 
 is a  well-defined  map  
$$f_{F_Q}: 
[0,1)^{|Q|-l(w)} \rightarrow   f_{(i_1,\dots ,i_d)}^{-1}(p)\cap \RR_{\ge 0}^d.$$
\end{lemma}

\begin{proof}
Whenever we encounter  a parameter $t_j$ for $j \in S^C$ as we proceed from left to right through the parameters as in the definition of 
$f_{F_Q}$ above,  
Lemma ~\ref{defined-forced-value} guarantees  
a  value $k_j$ for $t_j$   exists and 
is uniquely determined 
(among  the nonnegative reals)  by the choice of values $k_1,k_2,\dots ,k_{j-1}$   for the  parameters $t_1,t_2,\dots ,t_{j-1}$ 
to its left. 
Whenever we reach a parameter  $t_{j_r}$ with  $j_r \in S$,  we invoke 
Lemma ~\ref{closed-interval} 
   (in combination with Lemma ~\ref{other-char}) 
to deduce that we may  
set  $k_{j_r} = u_{r} \cdot  t_{j_r}^{\max }(k_1,\dots ,k_{j_r-1})$ for any $u_r\in [0,1)$ where $k_1,\dots ,k_{j_r-1}$ are the values of the parameters  that
have already been determined to the left of $t_{j_r}$; more specifically, after suitably  applying the  fiber-changing map 
$cf(p;k_1,\dots ,k_{j_r-1})$ we note that 
Lemma ~\ref{closed-interval}  ensures that there exists a 
point  $x\in f_{(i_1,\dots ,i_d)}^{-1}(p)\cap \RR^{d}$ having both  this value $u_r\cdot t_{j_r}^{\max }(k_1,\dots ,k_{j_r-1})$  
for $t_{j_r}$ and  also  the values $k_1,\dots ,k_{j_r-1}$ already specified for the  parameters $t_1,\dots ,t_{j_r-1}$.  
\end{proof}

\begin{lemma}\label{f-F-Q-injective}
The map $f_{F_Q}$ is injective.
\end{lemma}

\begin{proof}
Given distinct inputs $(u_1,\dots ,u_s)$ and $(u_1',\dots ,u_s')$ to the map $f_{F_Q}$, it suffices to show that $f_{F_Q}(u_1,\dots ,u_s) \ne f_{F_Q}(u_1',\dots ,u_s')$.  
 Let $f_{F_Q}(u_1,\dots ,u_s) = (k_1,\dots ,k_d)$ and let $f_{F_Q}(u_1',\dots ,u_s') = (k_1',\dots ,k_d')$.  Suppose that $u_i=u_i'$ for $i=1,2,\dots ,l-1$ whereas $u_l\ne u_l'$.  Then by definition of $f_{F_Q}$,  we have $k_i = k_i'$ for $i=1,2,\dots ,j_l-1$. 
This implies  that $t_{j_l}^{\max }(k_1,\dots ,k_{j_l-1}) = t_{j_l}^{\max }(k_1',\dots ,k_{j_1-1}')$.   
But then 
$k_{j_l}  = u_lt_{j_l}^{\max }(k_1,\dots ,k_{j_l-1})$ and   $k_{j_l}' = u_l't_{j_l}^{\max }(k_1,\dots ,k_{j_l-1})$.  This allows us to deduce that 
 $k_{j_l}\ne k_{j_l}'$ from $u_l\ne u_l'$, 
 provided that $t_{j_l}^{\max }(k_1,\dots ,k_{j_l-1})>0$.  But $t_{j_l}^{\max }(k_1,\dots ,k_{j_l-1})>0$ holds by  Lemma  
 ~\ref{any-l-positive}, a result which applies by virtue of our choice of domain $D^{<\max }_S$.
\end{proof}

\begin{example}
Consider $f_{(1,1,2,2,2)}$, in which case the rightmost subword that is a reduced word for $\delta (1,1,2,2,2) = s_1s_2$ has 
support $\{ 2,5\} $, which means $s=3$.  
Consider the fiber $f_{(1,1,2,2,2)}^{-1}(p)\cap \RR_{\ge 0}^5$ for $p = x_1(5)x_2(12)$.  Let
$(u_1,u_2,u_3) = (\frac{1}{2},\frac{1}{3},\frac{1}{4})\in [0,1)^3$ and let $(u_1',u_2',u_3')  = (\frac{1}{2},\frac{1}{3},\frac{3}{4})\in [0,1)3$. 
 Note that these first differ in their third coordinate.

Let us first calculate 
$f_{F_Q}\left(\frac{1}{2},\frac{1}{3},\frac{1}{4}\right).$  
We have $k_1 = \frac{1}{2} t_1^{\max } =  \frac{5}{2}$ which forces $k_2 = \frac{5}{2}$ as well.  Next we 
get $k_3 = \frac{1}{3} t_3^{\max }(k_1,k_2) 
= 4$, and then  $k_4 = \frac{1}{4} t_4^{\max }(k_1,k_2,k_3) 
= 2$.  This forces $k_5 = 6$.  Thus,
$f_{F_Q}(\frac{1}{2},\frac{1}{3}, \frac{1}{4}) = (\frac{5}{2}, \frac{5}{2}, 4, 2, 6)$.

Next we calculate the values $(k_1',k_2',k_3',k_4',k_5')$ given by 
$$f_{F_Q}\left(\frac{1}{2}, \frac{1}{3}, \frac{3}{4}\right) = (k_1',k_2',k_3',k_4',k_5').$$  
We get $k_1' = \frac{1}{2}  t_1^{\max } = \frac{5}{2}$,  $k_2' = \frac{5}{2}$ and 
$k_3' = \frac{1}{3} t_3^{\max }(k_1',k_2') = 4$.  However,  $k_4' = \frac{3}{4} t_4^{\max }(k_1',k_2',k_3') 
 =  6\ne k_4$.  This forces $k_5' = 2$.  Thus, 
$f_{F_Q}(\frac{1}{2},\frac{1}{3},\frac{1}{4}) 
 \ne f_{f_Q}(\frac{1}{2},\frac{1}{3},\frac{3}{4})$.   
\end{example}

Next we define a pair of maps, $h$ and $rtn$.  These will give another  way to think about the map $f_{F_Q}$, one  that will clarify its relationship to the map
$h_{k_1,\dots ,k_s}$ from Lemma ~\ref{generalized-Lusztig-homeom}.   
Readers  not needing this added  perspective 
 could consider skipping ahead to 
 Lemma ~\ref{any-then-all}.

\begin{defn}\label{h-map}
From the  maps $h_{k_{j_1},\dots ,k_{j_s}}$  introduced in Lemma ~\ref{generalized-Lusztig-homeom} we obtain a map 
$h: \RR^s\times \RR^{d-s} \rightarrow \RR^s \times Y_w^o$
given by the formula: 
$$h((k_{j_1},\dots ,k_{j_s}),(t_1,\dots ,t_{d-s})) =((k_{j_1},\dots ,k_{j_s}), h_{k_1,\dots ,k_s}(t_1,\dots, t_{d-s})). $$ 
\end{defn}

\begin{cor}\label{h-bijective}
The map $h$ is a bijection to its image.
\end{cor}

\begin{proof}
 Injectivity of $h$  follows from bijectivity of the identity map on $\RR^s$  along with the injectivity result from Theorem ~\ref{generalized-Lusztig-homeom}
  for the map $h_{k_{j_1},\dots ,k_{j_s}}$ 
 for each choice of $k_{j_1},\dots ,k_{j_s}\in \RR_{\ge 0}$. 
 Bijectivity to $im(h)$ then follows tautologically.  
\end{proof}

Lemma ~\ref{defined-forced-value}
  justifies the  well-definedness of the map $rtn$ defined next.


\begin{defn}\label{rtn-def}
Given $(i_1,\dots ,i_d)$ satisfying $\delta (i_1,\dots ,i_d) = w$, 
we define a map {\it  rtn} (short for {\it ``redundant-to-non-redundant''}) that takes as its input  
a point $(k_{j_1},\dots ,k_{j_s}) \in D_S^{<\max }$ and a point $p\in U(w)$.
%
The map  {\it rtn}   outputs the unique  $(t_1,\dots ,t_d)\in f_{(i_1,\dots ,i_d)}^{-1}(p) \cap \RR_{\ge 0}^d$ 
satisfying   $t_{j_r} = k_r$ for $j_r\in S$.     In other words, $rtn $ determines the unique possible value for 
each parameter $t_j$ for $j \in S^C$ given any 
choice of values for the parameters indexed by $S$ such that $(t_1,\dots ,t_d)\in F_{(i_1,\dots ,i_d)}^{-1}(p)\cap \RR_{\ge 0}^d$ exists.  
By Corollary ~\ref{early-f-j},  each $t_j$ for $j\in S^C$ is a function of only those parameters in $S$ having strictly smaller index than $j$.
 \end{defn}

From the definitions of $rtn$, $h$, and $D_S^{<\max }$ 
together with the bijectivity result for $h_{k_1,\dots ,k_s}$ appearing in Lemma ~\ref{generalized-Lusztig-homeom} for
each choice of $k_1,\dots ,k_s\in \RR_{\ge 0}^s$, 
one may easily observe the following relationship between $rtn $ and  the map $h$ from Definition ~\ref{h-map}.

\begin{cor}\label{defining-rtn}
The map $rtn$ equals $h^{-1}|_{D_S^{<\max }}$, 
 hence is  well-defined and injective.  
\end{cor}

Before describing  the relationship between $f_{F_Q}$ and $rtn$, we  first define a connecting  map called $sc$. 

\begin{defn}
Given $p\in U(w)$ and given $(i_1,\dots ,i_d)$ satisfying $\delta (i_1,\dots ,i_d) = w$, 
define the bijective  map $sc :[0,1)^s \rightarrow D_S^{<\max }$ by setting $sc(u_1,\dots ,u_s) = (t_{j_1},\dots ,t_{j_s})$ where 
$t_{j_r} = u_r t_{j_r}^{\max }(k_1,\dots ,k_{j_r-1})$ where $k_i=t_i$ for $i<j_r$ with $i\in \{j_1,\dots ,j_{r-1}\} $ and otherwise $k_i$ is the unique value that $t_i$ can take within $f^{-1}_{(i_1,\dots ,i_d)}(p)\cap \RR_{\ge 0}^d$ subject to our choice of values for the parameters to its left.  In other words, $sc$ is the projection of the map 
$f_{F_Q}$ to just those coordinates  indexed by $S = \{ j_1,\dots ,j_s\}$.  
\end{defn}

\begin{prop}\label{relating-maps}
Given $w\in W$,    
$p\in U(w)$ and $(i_1,\dots ,i_d)$ with   $\delta (i_1,\dots ,i_d) =w$, 
the maps $f_{F_Q}, rtn$ and $sc$ satisfy the relationship  $f_{F_Q} = rtn \circ sc$ with maps composed from right to left. 
\end{prop}

\begin{proof}
This follows directly  from how  the maps $f_{F_Q}, rtn$ and $sc$ are defined. 
\end{proof}

Next are results that  will constrain 
which strata 
are in the image of  $f_{F_Q}$.

\begin{lemma}\label{any-then-all}
Consider any $w\in W$, any word $(i_1,\dots ,i_d)$ with $\delta (i_1,\dots ,i_d)=w$, and any subword 
$Q$ 
with $\delta (Q) = w$.  
  If any 
point $(k_1,\dots ,k_d)$  in the stratum $F_Q$  has a parameter $k_j$  with  $j\in S$ satisfying $k_j = t_j^{\max }(k_1,\dots ,k_{j-1})>0$, 
then all points $(k_1',\dots ,k_d')$  in $F_Q$  satisfy $k_j' = t_j^{\max }(k_1',\dots ,k_{j-1}')>0$.   
\end{lemma}

\begin{proof}  
Consider any point $(k_1,\dots ,k_d)\in F_Q$ having 
$k_j = t_j^{\max }(k_1,\dots ,k_{j-1}) >0$ for some $j\in S$. 
Let  $w'$ denote the element of $W$ such that $cf(p;k_1,\dots ,k_{j-1}) \in U(w')$. 
Notice that $cf(p;k_1,\dots ,k_j) = cf(cf(p;k_1,\dots ,k_{j-1});k_j)$.  
This lets us use  Lemma ~\ref{any-l-positive} to deduce that  
our choice of $k_j$ satisfying  $k_j = t_j^{\max }(k_1,\dots ,k_{j-1})$   implies  
$cf(p;k_1,\dots ,k_j) \in U(s_{i_j}w')$ with $s_{i_j}w' < w'$ in Bruhat order.  
But $s_{i_j}w' < _{Bruhat} w'  $ means that $s_{i_j}w'$ does not have any reduced expressions with leftmost letter $s_{i_j}$. 

Let $Q_{j+1}$ denote  the subword of $(i_{j+1},\dots ,i_d)$ consisting of those letters 
also in $Q$, and let $Q_{j,k}$ denote the subword of $(i_j,\dots ,i_k)$ consisting of those letters also in $Q$.   We deduce from
$cf(p;k_1,\dots ,k_j)\in U(s_{i_j}w')$  and from $f_{(i_1,\dots ,i_d)}(k_1,\dots ,k_d) = p$ that 
$\delta (Q_{j+1}) = s_{i_j}w'$.  This 
forces $Q$ not to have any deletion partners for $i_j$ to the right of $i_j$.  In other words,  if we proceed from left to right through the letters in $Q$, 
this forces any  parameter $k_l$ with $l>j$  to be 0   
if  
  $Q_{j,l-1} \cup i_l$ 
is a word in which $i_j$ would  have a deletion partner to its right 
 (implying $i_j$ would also have a deletion partner to its right in $Q$).    
But all points in the stratum  $F_Q$ have the same parameters set to 0 as each other, so in particular have the same support as $(k_1,\dots ,k_d)$.  
%
 Having these parameters $t_l$  to the right of $t_j$ which would be deletion partners for $i_j$ set to $0$  at all points in $F_Q$   makes $i_j$ nonredundant in  the word obtained
 by appending $Q_{j+1}$ to the right of $i_j$.   This forces  $t_j$ 
to take its maximal  possible value at  every   point in $F_Q$, given our assumption that $k_j = t_j^{\max }(k_1,\dots ,k_{j-1})$  for some 
point $(k_1,\dots ,k_d)\in F_Q$.  
\end{proof}

\begin{example}
Let $(i_1,\dots ,i_d) = (1,2,1,2,1)$ and  let 
$Q = (1,2,1,2)$ be the subword with support $\{ 1,2,3,4\} $.    Then all points in 
 $F_Q\subseteq f_{(1,2,1,2,1)}(p) \cap \RR_{\ge 0}^5$  for $p = x_1(3)x_2(5)x_1(7)$ have $0 < t_1 < t_1^{\max }=3$ since  the conditions 
 $t_1 =  t_1^{\max }$, 
$t_2>0$, and $t_3>0$ would combine to  force $t_4=0$, contradicting our choice of support for $Q$.
\end{example}

\begin{lemma}\label{last-redundant}
Consider any  point  $p\in U(w)$ and any  word $(i_1,\dots ,i_d)$ satisfying $\delta (i_1,\dots ,i_d)=w$ and  having 
$i_1$ 
redundant in $(i_1,\dots ,i_{d})$. 
Consider any  nonempty  open stratum  $F_Q$ of $f_{(i_1,\dots ,i_d)}^{-1}(p)\cap \RR_{\ge 0}^d$ 
which has the property for some point $(k_1,\dots ,k_d)$ in $F_Q$ (equivalently for all points in $F_Q$)  that 
$0 < k_1 < t_1^{\max }$.  
This stratum  is   indexed by a  subword $Q = (i_{l_1},i_{l_2},\dots ,i_{l_r})$ of $(i_1,\dots ,i_d)$
   satisfying all of the following conditions: 
  \begin{enumerate} 
  \item $l_1 = 1$ 
  \item   $\delta  (i_1,i_{l_2},\dots , i_{l_r}) = w$
  \item 
$i_1$ is redundant in $(i_1,i_{l_2},\dots ,i_{l_r})$.
\end{enumerate}
\end{lemma}

\begin{proof}
The equivalency between $t_1$ being non-maximal  for one point and for all points in $F_Q$  is proven in Lemma ~\ref{any-then-all}.
Conditions 1 and 2 follow from our $k_1>0$ assumption and our assumption that $F_Q$ is nonempty, respectively.
 Condition 3 follows from Lemma ~\ref{last-letter-determined}.
\end{proof}

\begin{cor}\label{middle-redundant}
Consider any  $p\in U(w)$ and any  word $(i_1,\dots ,i_d)$ with  $\delta (i_1,\dots ,i_d) =w$ that has 
$i_l$  redundant in $(i_l,\dots ,i_d)$ for some $l\ge 1$. 
 Consider any nonempty  stratum $F_Q$  of $f_{(i_1,\dots ,i_d)}^{-1}(p)\cap \RR_{\ge 0}^d$ 
having the property for some point $(k_1,\dots ,k_d)$ in $F_Q$  (equivalently for all points  in $F_Q$) that  $$0 < k_l < t_l^{\max }(k_1,\dots ,k_{l-1}),$$ i.e., 
 $k_l$ is positive but does not achieve 
the maximal value it takes within $f_{(i_1,\dots ,i_d)}^{-1}(p) \cap \RR_{\ge 0}^{d}$ for our  given choice of values 
$k_1,\dots ,k_{l-1}$  
for the parameters $t_1,\dots ,t_{l-1}$.    
This stratum is indexed by a subword $Q = (i_{l_1},i_{l_2},\dots ,i_{l_r})$ of $(i_1,\dots ,i_d)$ satisfying all of the following conditions:
\begin{enumerate}
\item 
$l \in supp(Q)$,  so   $l = l_m$ for some $1\le m\le r$.
\item 
$\delta (i_{l_1},\dots ,i_{l_r}) = w$
\item
$i_{l_m}$ is redundant in $(i_{l_m},\dots ,i_{l_r})$.
\end{enumerate}
\end{cor}

\begin{proof}
Let  $p' = cf(p; k_1,\dots ,k_{l-1})$.   Since $p' \in U(\delta (i_l,\dots ,i_d))$ by Lemma ~\ref{redundant-non-max-which-delta}, 
the result follows from Lemma ~\ref{last-redundant} applied to $f_{(i_j,\dots ,i_d)}^{-1}(p')\cap \RR_{\ge 0}^d$.
\end{proof}

Now we are ready to prove that $f_{F_Q}$ is a bijection from $[0,1)^s$  to a union of strata.  

\begin{lemma}\label{thm:CW-inverse}
Consider any word $(i_1,\dots ,i_d)$ with $\delta (i_1,\dots ,i_d)=w$ and any  $p\in U(w)$. 
Let $Q$ be any subword of $(i_1,\dots ,i_d)$ that satisfies  $\delta (Q)=w$.  
Let  $v$ be the 0-cell  in $f_{(i_1,\dots ,i_d)}^{-1}(p)\cap \RR^{d}_{\ge 0}$
indexed by the rightmost subword of $Q$ that is a reduced word for $w$.
The map $f_{F_Q}$ 
is a bijection 
from $[0,1)^{|Q|-l(w)}$ to 
$\bigcup_{ v  \subseteq  \overline{F_{Q'}  }\subseteq \overline{F_Q}} F_{Q'}$.  
Moreover, $f_{F_Q}^{-1}$ may be calculated explicitly. 
\end{lemma}

\begin{proof}
Injectivity of $f_{F_Q}$ was proven in Lemma ~\ref{f-F-Q-injective}, allowing us to focus henceforth on surjectivity.
 Let $S = \{ j_1,\dots ,j_s\}$  with $j_1< \dots <j_s$ be those positions not in the support of $v$.
 We apply Corollary 
 ~\ref{middle-redundant}  
 to  deduce that $im(f_{F_Q})$ can only intersect nontrivially with 
 those strata  $F_{Q'}$ satisfying $v\subseteq \overline{F_{Q'}}\subseteq \overline{F_Q}$. 
 Our choice of  $u_1,\dots ,u_s$ each strictly less than 1 forces 
$$k_{j_1}< t_{j_1}^{\max }(k_1,\dots ,k_{j_1-1}); k_{j_2} < t_{j_2}^{\max }(k_1,\dots ,k_{j_2-1}); \dots ;
k_{j_s}<t_{j_s}^{\max }(k_1,\dots ,k_{j_s-1}).$$  By  the positivity part of Lemma ~\ref{defined-forced-value}
combined with Lemma ~\ref{other-char}, these inequalities
 imply  that the support of $Q'$ must contain the support of $v$.  Thus, we are assured that  any stratum  
$F_{Q'}$  in the image of $f_{F_Q}$ must satisfy  $v\subseteq \overline{F_{Q'}}$.

 We  will repeatedly apply 
 Corollary ~\ref{closed-interval}, 
 as described in detail next,  to  deduce surjectivity of $f_{F_Q}$  onto each  strata $F_{Q'}$ 
 satisfying $v\subseteq \overline{F_{Q'}}\subseteq \overline{F_Q}$. 
That is,    for each  point $(k_1,\dots ,k_d) \in F_{Q'}$ for  each stratum  $F_{Q'}$ satisfying 
 $v\subseteq \overline{F_{Q'}}\subseteq \overline{F_Q}$
we  will determine a point $(u_1,\dots ,u_s)\in [0,1)^s$ that satisfies 
 $f_{F_Q}(u_1,\dots ,u_s) = 
 (k_1,\dots ,k_d)$.  
   
   We must have $t_{j_1}^{\max }(k_1,\dots ,k_{j_1-1}) > 0$ as a consequence of 
 Lemma \ref{max-achieved} (after  applying the map $cf$ from Definition \ref{cf-def}  with inputs $p,k_1,\dots ,k_{j_1-1}$ so as to make $t_{j_1}$ the leftmost parameter).    We will use induction on
 $r$ to show likewise that we have $t_{j_r}^{\max }(k_1,\dots ,k_{j_r-1}) > 0$ for  $1\le r\le s$.  
Suppose we have 
$t_{j_r}^{\max }(k_1,\dots ,k_{j_r-1}) = 0$  for  some $r$,  choosing such $r$  to be as small as possible.  By Corollary  ~\ref{middle-redundant} and
by  the definition of the  Demazure
product, this would imply 
$k_{j_i} = t_{j_i}^{\max }(k_1,\dots k_{j_i-1})$  
for some $i<r$ which is a deletion partner for $i_{j_r}$. 
But that would  force $u_i=1$ for this same $i$, since by definition of $f_{F_Q}$ we must have 
$$u_i = \frac{k_{j_i}}{t_{j_i}^{\max }(k_1,\dots ,k_{j_i-1})}$$ 
with  
$t_{j_i}^{\max }(k_1,\dots ,k_{j_i-1})\ne 0$ for each $i<r$.   The $t_{j_i}^{\max }(k_1,\dots ,k_{j_i-1})\ne 0$ requirement for $i<r$  follows from our minimality assumption on $r$.  
But we cannot have $u_i=1$ for any  $i<r$, by virtue of 
our choice of domain 
$[0,1)^{|Q|-l(w)}$, giving the desired contradiction.  

Having $t_{j_r}^{\max }(k_1,\dots ,k_{j_r-1}) > 0$  for each $r$  allows us to set 
  $u_r  = \frac{k_{j_r}}{t_{j_r}^{\max }(k_1,\dots ,k_{j_r-1})}\in [0,1)$.  By Corollary ~\ref{closed-interval}, we may obtain any element of $[0,1)$ as the value for 
  $u_r$ in this manner, since Corollary ~\ref{closed-interval} guarantees that 
  $k_{j_r}$ can  achieve any value within $[0,t_{j_r}^{\max }(k_1,\dots ,k_{j_r-1}))$.  
%
It is  now straightforward to  check that setting 
$ f_{F_Q}^{-1}(t_1,\dots ,t_d) = (u_1,\dots ,u_{|Q|-l(w)} )$ with the right side  calculated in this manner  
indeed gives  the valid inverse
map for $f_{F_Q}$.   The existence of this inverse map  implies bijectivity. 
%
\end{proof}

Our next result, Lemma ~\ref{max-to-unique},  relates the maximal value of a redundant  parameter to the unique value for this parameter when we pass to a subword in 
which it is  non-redundant.  
This will help us prove continuity of the function $t_{j_r}^{\max }$ in Lemma ~\ref{cont-max}.
 
\begin{lemma}\label{max-to-unique}
Consider   any point $p\in U(w)$ and any  word $(i_1,\dots ,i_d)$ satisfying   $\delta (i_1,\dots ,i_d)=w$ and having   $i_1$ redundant in $(i_1,\dots ,i_d)$.
Let $k_1$ 
 be the largest value that $t_1$ 
 takes at any point $(t_1,\dots ,t_d)\in f_{(i_1,\dots ,i_d)}^{-1}(p)\cap \RR_{\ge 0}^d$.
If  a  subword $(i_{j_1},i_{j_2},\dots ,i_{j_m})$ of $(i_1,\dots ,i_d)$ meets all  the conditions
\begin{enumerate}
\item 
$j_1=1$,
\item
 $i_{j_1}$ non-redundant in $(i_{j_1},\dots ,i_{j_m})$, and 
\item
$\delta (i_{j_1},\dots ,i_{j_m}) = w$,
\end{enumerate} 
then 
 $k_1$ is  the unique value  $t_{j_1}$ takes among points $(t_{j_1},\dots ,t_{j_m})\in 
f_{(i_{j_1},\dots ,i_{j_m})}^{-1}(p)\cap \RR_{\ge 0}^d$. 
%
\end{lemma}

\begin{proof}
This follows directly from Corollary ~\ref{unique-value-other-cell}  combined with  Lemma ~\ref{max-achieved}. 
\end{proof}

\begin{example}\label{largest-becomes-unique}
Notice for  $(t_1,t_2,t_3) \in f_{(1,1,2)}^{-1}(x_1(3)x_2(7))\cap \RR_{\ge 0}^3$, the parameter $t_1$ is indexed by the letter $1$ that is redundant in $(1,1,2)$, and observe 
that $t_1$ takes  maximal value of 3 within $f_{(1,1,2)}^{-1}(x_1(3)x_2(7))\cap \RR_{\ge 0}^3$.  
  If we instead consider
$(t_1,t_3) \in f_{(1,2)}^{-1}(x_1(3)x_2(7))$, then the parameter $t_1$ is non-redundant with unique value 3.  The fact that we get the same value of 3 in both cases 
follows from the fact that setting $t_1$ to its maximal possible
value in $f_{(1,1,2)}^{-1}(x_1(3)x_2(7))$  forces $t_2$ to be 0, in effect replacing the word $(1,1,2)$ by the word $(1,2)$ as our way of recording 
the fact that  $x_1(t_1)x_1(0)x_2(t_3) = x_1(t_1)x_2(t_3)$.  
\end{example}

Next we prove 
that $f_{F_Q}$ and $f_{F_Q}^{-1}$ are continuous functions; this is much more subtle when $Q$ is non-reduced than for $Q$ reduced, a point we highlight now 
because  our proof  will justify and make use of the fact  that $f_{F_Q}^{-1}$ is continuous when $Q$ is reduced. 
As stepping stones,  we first prove continuity 
of $t_{j_r}^{\max }$  for each $j_r\in S$, 
and  we prove continuity 
of the function $f_{j}$ from Definition ~\ref{f-j-def} for each $j\in S^C$.

\begin{lemma}\label{cont-forced-value}
Consider any $w\in W$, any word $(i_1,\dots ,i_d)$ with $\delta (i_1,\dots ,i_d) = w$, and any $p\in U(w)$.  
For each $j\in S^C$, 
the function $f_{j}$ 
is  
continuous. 
\end{lemma}

\begin{proof}
We already showed  in the proof of Lemma ~\ref{defined-forced-value} that  $f_j$ outputs 
 the unique  value for the  
leftmost coordinate  for all points in $f_{(i_j,\dots ,i_d)}^{-1}(p_{j-1}) \cap \RR_{\ge 0}^{d-j+1}$ 
for  $p_{j-1} =   x_{i_{j-1}}(-k_{j-1}) \cdots x_{i_1}(-k_1)p$. 
Using this, we  will show how to deduce the desired   continuity result 
from the   following  facts: 
\begin{enumerate}
\item
Let $Q'$ be a  subword of $(i_j,\dots ,i_d)$ that is reduced, satisfies $\delta (Q') = \delta (i_j,\dots ,i_d)$  and contains $i_j$.  Then $f_{Q'}$ is a homeomorphism from  
$\RR_{>0}^{|Q'|}$ to its image within $U(w)$   (by part (d) of Theorem \ref{Lu-theorem}), implying $f^{-1}_{Q'}$ is continuous and hence that each of the coordinate
functions comprising $f^{-1}_{Q'}$ is also continuous.  
\item
Also note that $p_{j-1} = x_{i_{j-1}}(-k_{j-1})\cdots x_{i_1}(-k_1) p $
 is  a continuous function of  $k_1,\dots ,k_{j-1}$, a fact which follows from the 
definition of a pinning (see e.g.\ Definition ~\ref{pinning-def} or  Section 1 of \cite{Lu} or Section 2.1 of \cite{RW}) by the following reasoning.
 Since homomorphisms of Lie groups are smooth functions, this ensures  continuity of each of the functions $x_{i_s}$ in a single variable $k_s$  for $s=1,2,\dots, j-1$, and hence  
continuity of the desired  product of such functions (composed with negation functions)   in the variables $k_1,\dots ,k_{j-1}$.
 \end{enumerate}
 Arguing by induction on $j$, it follows from the second  fact above  
 that $p_{j-1}$ is a continuous function of $k_{j_1},\dots ,k_{j_{r-1}}$ where $\{ j_1,\dots ,j_{r-1} \} $ are the elements of $S$ that are less than $j$ when at least one element of $S$
 is less than $j$, as we now explain.   The point is to use  the fact that we may assume by induction  for each $j'<j$ with $j'\in S^C$ that $k_{j'}$ is a continuous function of 
 $k_{j_1},\dots ,k_{j_{r'}}$ where 
 $j_{r'}$ is the largest element of $S$ such that   $j_{r'} < j' $.
 The base case for the induction is any $j$ having no elements of $S$ to its left, in which case $f_j$ is a 
 constant function and hence continuous.  The upshot is that $f_j$ is a continuous function in a collection of parameters that by induction are each continuous functions in
 just the allowed  parameters indexed by $S$.  
 Since  $f_j$ is one of the coordinate functions of $f_{Q'}^{-1}$, it follows from  fact 1 that it is a continuous function of $p_{j-1}$, hence is a continuous function of the 
 allowed parameters.
\end{proof}

For the next example, we build upon Example ~\ref{specific-instance}.   That is, Example ~\ref{specific-instance} gives the functions we described next evaluated at the point
$t_1 = 4$.   

\begin{example}
Let $(i_1,\dots ,i_d) = (1,2,1,2)$ so $S = \{ 1\}  \subseteq \{ 1,2,3,4\} $ since the rightmost reduced word for  $\delta (1,2,1,2) = s_1s_2s_1$  within $(1,2,1,2)$
has support $S^C = \{ 2,3,4\} $.  Consider
$(t_1,t_2,t_3,t_4) \in f_{(1,2,1,2)}^{-1}(p)\cap \RR_{\ge 0}^4$ for $p = x_1(9)s_2(3)x_1(7)$. 
We have $f_2(t_1) = \frac{21}{16-t_1}, f_3(t_1) = 16-t_1$ and  $f_4(t_1) = \frac{(9-t_1)\cdot 3}{16-t_1}$.
\end{example}

\begin{remark}\label{new-t-j-max}
In upcoming results, it will be useful to speak of $t_{j_r}^{\max }(k_{j_1},\dots ,k_{j_{r-1}})$ for $j_r\in S$.
  That is, we define $t_{j_r}^{\max }$ similarly to before, but now utilizing the fact that we 
  may regard it as a function of only those parameters to the left of $t_{j_r}$ which are indexed by positions in $S$.  That is, we 
let   $t_{j_r}^{\max }(k_{j_1},\dots ,k_{j_{r-1}})$ 
equal 
the maximal value that   $t_{j_r}$ takes  within
$f^{-1}_{(i_{j_r},\dots ,i_d)}(p_r) \cap \RR_{\ge 0}^{d-(j_r-1)}$ 
where  
$p_r :=
x_{i_{j_r-1}}(-k_{j_r-1}) \cdots x_{i_2}(-k_2)x_{i_1}(-k_1)p$ where each $k_i$ with $i<j_r$ and $i\in S^C$ is uniquely determined by the values of the parameters to its left. 
\end{remark}

\begin{lemma}\label{cont-max}
Consider any $w\in W$, any word $(i_1,\dots ,i_d)$ with $\delta (i_1,\dots ,i_d) = w$, and any  $p\in U(w)$.
For  each $j_r  \in S$, 
the function  $t_{j_r}^{\max } $ 
restricted to  the domain $D^{< \max }_{S,r-1}$ is a continuous function with image contained in  the positive reals.
\end{lemma}

\begin{proof}
In what follows, we use that $t_{j_r}^{\max }$ may be regarded as a function of the values of the parameters to the left of $t_{j_r}$ that are 
 in $S$, as explained in 
Remark ~\ref{new-t-j-max} and in the definition of $t_{j_r}^{\max }$ itself.  
This allows us to speak of $t_{j_r}^{\max }(k_{j_1},\dots ,k_{j_{r-1}})$ rather than  
$t_{j_r}^{\max }(k_1,\dots ,k_{j_r-1})$, using that $k_{j_1},\dots ,k_{j_{r-1}}$ together with $p$ 
 uniquely determine $k_1,\dots ,k_{j_r-1}$.  
Lemma ~\ref{any-l-positive}  guarantees  $t_{j_r}^{\max }(k_{j_1},\dots ,k_{j_{r-1}})>0$ for 
each $(k_{j_1},\dots ,k_{j_{r-1}})\in D^{<\max }_{S,r-1}$.  But then 
 Lemma ~\ref{max-achieved} ensures that  setting $t_{j_r}$ equal to   $t_{j_r}^{\max }(k_{j_1},\dots ,k_{j_{r-1}})$  does give at least one solution  
  to   $f_{(i_1,\dots ,i_d)}(t_1,\dots ,t_d) =p$ with $(t_1,\dots ,t_d)\in \RR_{\ge 0}^d$ satisfying the constraints $t_{j_r} = t_{j_r}^{\max }(k_{j_1},\dots ,k_{j_{r-1}}) >0$ and 
  $ t_i=k_i$ for $i=j_1,\dots ,j_{r-1}$.
 \commenth{I think  someone had changed the next statement, making it not what we need.  It looks like they were 
 trying to make it sound exactly like the corollary being cited  instead of having 
 incorporated the indices/variables we actually need.
   We had wanted to say something that is not what one would expect, but which is what we need and which  indeed follows immediately from this corollary we use.  
    I just  switched it back to what we need so we get the conclusion we actually need.} 
 As already discussed, the values $k_{j_1},\dots ,k_{j_{r-1}}$ along with $p$ uniquely determine the values for all of the parameters $k_1,\dots ,k_{j_r-1}$ (as 
 justified by  Corollary ~\ref{early-f-j} and  Lemma ~\ref{defined-forced-value}), 
  giving a unique choice of 
  point $cf(p;k_1,k_2, \dots ,k_{j_r-1})$ that is consistent with the point  $p$ and the 
  values $k_{j_1},\dots ,k_{j_{r-1}}$.
  By Corollary ~\ref{any-l},  
  setting $t_{j_r}$ to its maximal possible value   $k_{j_r} := t_{j_r}^{\max }(k_{j_1},\dots ,k_{j_{r-1}})$ causes  the resulting point  
   $cf(p; k_1,\dots ,k_{j_r})$  
   to be in $U(s_{i_{j_r}}\delta(i_{j_r+1},\dots ,i_d))$.  Moreover, 
   we must have $s_{i_{j_r}}\delta(i_{j_r+1},\dots ,i_d) < 
   \delta (i_{j_r+1},\dots ,i_d)$  in Bruhat order by virtue of  $j_r$ being in $S$ and hence $i_{j_r}$ being redundant in $(i_{j_r},\dots ,i_d)$.   
  
   This containment  $cf(p;k_1,\dots ,k_{j_r}) \in U(s_{i_{j_r}}\delta (i_{j_r+1},\dots ,i_d))$ we have just deduced   implies 
  further constraints  on all  points  
  $(k_1,\dots ,k_{j_r-1},t_{j_r}^{\max }(k_1,\dots ,k_{j_r-1}),t_{j_r+1},\dots ,t_d)$ that are in 
  $ f_{(i_1,\dots ,i_d)}^{-1}(p)\cap \RR_{\ge 0}^d,$ 
 namely it implies   that enough parameters to the right of $t_{j_r}$ must be set to 0 so that the  resulting 
   subword of $(i_{j_r+1},\dots ,i_d)$  consisting of exactly  the  letters  $i_s$  in $(i_{j_r+1},\dots ,i_d)$ 
   with $t_s>0$ is a subword   having  Demazure product 
   $s_{i_{j_r}}\delta(i_{j_r+1},\dots ,i_d)$. 
   One such subword $Q'$ is the leftmost subword for $s_{i_{j_r}}\delta (i_{j_r+1},\dots ,i_d)$ within 
   $(i_{j_r+1},\dots ,i_d)$.  Let $Q''$ be the word obtained from this $Q'$ by appending $i_{j_r}$ to the immediate left of the word $Q'$, 
   so in other words $Q''$ is the leftmost 
   subword of $(i_{j_r},\dots ,i_d)$ that is a reduced word for $\delta (i_{j_r+1},\dots ,i_d)$. 
Since   
$Q''$ is reduced,   the letter $i_{j_r}$ is non-redundant in $Q''$.  By Lemma ~\ref{last-letter-determined}, 
this  implies both that 
there is a unique value for
$t_{j_r}$ in $f_{Q''}^{-1}(p_r)\cap \RR_{\ge 0}^{|Q''|}$ where $p_r = cf(p;k_1,\dots ,k_{j_r-1})$ and also  
that this value for $t_{j_r}$  is positive. 

By Lemma ~\ref{max-to-unique},  the maximal value for $t_{j_r}$ in 
$$f_Q^{-1}(p)\cap \{ (k_1,\dots ,k_{j_r-1},t_{j_r},t_{j_r+1},\dots ,t_d) \in \RR_{\ge 0}^d  \} $$  
is exactly the unique value for  the leftmost parameter (i.e. the parameter we are still calling  $t_{j_r}$) in $f_{Q''}^{-1}(cf(p_r)) \cap \RR_{\ge 0}^{d-k_r+1}$.  

Thus, it suffices to prove 
that the unique value for the leftmost parameter $t_{j_r}$  in 
$f_{Q''}^{-1}(p_r)\cap \RR_{\ge 0}^{d-(j_r-1)}$ is a continuous function of   $k_{j_1},\dots ,k_{j_{r-1}}$ on the 
domain  $D^{ < max}_{S,r-1}$.
Since   Lemma ~\ref{redundant-non-max-which-delta} implies that  
$Q''$   does not depend on  the   choice of  values 
$k_{j_1},\dots ,k_{j_{r-1}}$ within 
 $D^{< max}_{S,r-1}$, 
the result now follows from Lemma \ref{cont-forced-value}. 
%
\end{proof}

We are finally ready to show how   the maps $f_{F_Q}$ given by  the  subwords $Q$ of $(i_1,\dots ,i_d)$  
having $\delta (Q) = w$  
give rise to   our desired family of  homeomorphisms:

\begin{thm}\label{thm:CW}
Consider 
any   $p\in U(w)$,  any word $(i_1,\dots ,i_d)$ with  $\delta (i_1,\dots ,i_d)=w$, and any  
stratum  $F_Q\subseteq  f_{(i_1,\dots ,i_d)}^{-1}(p)\cap \RR_{\ge 0}^{d}$ 
given by a subword $Q$ of $(i_1,\dots ,i_d)$ with $\delta (Q) = w$.  
Let  $v  $  be   the  stratum (consisting of a single point) 
in $ \overline{F_Q}$ whose support 
indexes the rightmost  subword  of  $Q$  that is a reduced word for $w$.  
 Then  the   map $f_{F_Q}:
[0,1)^{|Q|-l(w)} \rightarrow f_{(i_1,\dots ,i_d)}^{-1}(p) \cap \RR_{\ge 0}^d$ 
is  a homeomorphism from $[0,1)^{|Q|-l(w)}$
 to  
 $\bigcup_{v\subseteq \overline{F_{Q'} } \subseteq \overline{F_Q} } F_{Q'} $.
\end{thm}

\begin{proof}
Recall that $f_{F_Q} (u_1,\dots ,u_s) = (t_1,\dots ,t_d)$.  
Note that $S^C$ is  
the support of $v\in \overline{F_Q}$.
In Lemma ~\ref{thm:CW-inverse}, we  prove that $f_{F_Q}$  is  a bijection from $[0,1)$  to  
$\bigcup_{v\subseteq \overline{F_{Q'}}\subseteq \overline{F_Q}} F_{Q'}$. 
Continuity of $f_{F_Q}$   will follow from the following  three  facts:
\begin{enumerate}
\item  
Each parameter $t_i$  with $i\not\in S$ is  uniquely determined by our choice of values for the parameters to its left  indexed by $S$, as this  is  
proven  in Lemma ~\ref{defined-forced-value}.
\item 
Each parameter $t_i$  with $i\not \in S$  
is  a continuous function of the set of   
parameters $\{ t_j| j<i;  j\in S\}  $ 
by virtue of  Lemma  ~\ref{cont-forced-value} combined with 
the assumption we may make by induction  that  each $t_j$ for $j< i$  with $j\not\in S$ 
is  a continuous function of  those variables among  $t_1,\dots , t_{j-1}$ with index in $S$. 
\item 
For  $1\le r \le d'$, the 
quantity  $t_{j_r}^{\max }(k_{j_1},\dots ,k_{j_{r-1}})$  
is a continuous function 
of the values $k_{j_1},\dots ,k_{j_{r-1}}$ 
chosen for the parameters $t_{j_1},\dots ,t_{j_{r-1}}$ 
 to the left of $t_{j_r}$  which are 
  indexed by elements of $S$, as this is proven in Lemma ~\ref{cont-max}.
 \end{enumerate}
 These facts allow us to deduce  for fixed $p$ and for  $f_{F_Q}(u_1,\dots ,u_{|Q|-l(w)}) = (t_1,\dots ,t_d)$ that each $t_i$ is a 
 continuous function of $u_1,\dots ,u_{|Q|-l(w)}$ 
 whether $i\in S^C$ or $i\in S$, by proceeding from left to right as follows.  For $j_r\in S$, we solve for $t_{j_r}$ in the formula 
 $u_{j_r} = \frac{t_{j_r}}{t_{j_r}^{\max }(k_{j_1},\dots ,k_{j_{r-1}})}$ that appeared in proof of Lemma \ref{thm:CW-inverse}.  We use  the third claim  above to get continuity of $t_{j_r}$ in this case (using  the inductive assumption
 that  $k_{j_1},\dots ,k_{j_{r-1}}$ were already calculated as continuous functions of $u_1,\dots ,u_{|Q|-l(w)}$ earlier in proceeding from left to right).  We use the first and second claims above to deduce 
 continuity for   those $t_i$ with $i\not\in S$.  Combining these continuity results for the coordinate functions comprising $f_{F_Q}$  yields  continuity of $f_{F_Q}$ itself.  
 
 Finally, we use  the fact that any  continuous, bijective map from a compact space to a Hausdorff space 
  is  a homeomorphism to show that $f_{F_Q}$ is a homeomorphism. 
    \end{proof}

\begin{example}
Let $(i_1,\dots ,i_d) = (1,2,1,2,1)$ and consider $p = x_1(3)x_2(5)x_1(7) \in U(s_1s_2s_1)$.  We now describe in this case  how $f_{(1,2,1,2,1)}^{-1}(p) \cap \RR_{\ge 0}^5$ breaks into various unions of strata that are each homeomorphic to $[0,1)^s$ for suitable positive integers $s$.  The stratum $F_{(1,2,1,2,1)}$ consisting   of those points 
$(t_1,t_2,t_3,t_4,t_5)\in f_{(1,2,1,2,1)}^{-1}(p) \cap \RR_{\ge 0}$ with  all five parameters  positive has its points  
indexed by the ordered pairs $(t_1,t_2)$ satisfying $0<t_1< 3$ and 
$0< t_2 < \frac{35}{7(3-t_1)}$ since there is exactly one point in the stratum for each such choice.  Theorem ~\ref{thm:CW} gives a homeomorphism from $[0,1)^2$ to  the union of strata  
$$F_{(1,2,1,2,1)} \cup  F_{(-,2,1,2,1)} \cup F_{(1,-,1,2,1)} \cup F_{(-,-,1,2,1)}.$$  
One may  check that this homeomorphism restricts to a homeomorphism from  $(0,1)^2$ to $F_{(1,2,1,2,1)}$.  The 
other strata in this collection are obtained by setting $t_1=0$ or $t_2=0$ or both.  

Theorem ~\ref{thm:CW} also gives a homeomorphism from  $[0,1)^1$ to    $F_{(1,2,-,2,1)} \cup F_{(1,-,-,2,1)}$, as well as a homeomorphism from 
$[0,1)^1$ to $F_{(1,2,1,-,1)} \cup F_{(1,2,-,-,1)}$ and another homeomorphism 
from  $[0,1)^1 $ to  $F_{(1,2,1,2,-)}\cup F_{(-,2,1,2,-)}$.  Finally we have the individual 
stratum  $F_{(1,2,1,-,-)}$ which is homeomorphic to $[0,1)^s$ for $s=0$, namely to a point.    Thus, each strata in our stratification for 
$f_{(1,2,1,2,1)}^{-1}(p)\cap \RR_{\ge 0}^d$ is in one of these collections of strata. 

In fact, we may apply Theorem ~\ref{thm:CW} choosing $Q$ to index any subword $Q$ of $(1,2,1,2,1)$ that is a reduced word for $s_1s_2s_1$. 
For instance,  $Q = (-,2,1,2,1)$ gives  rise to a homeomorphism 
$f_{F_Q}$ from 
$[0,1)^1$ to  $F_{(-,2,1,2,1)}\cup F_{(-,-,1,2,1)}$ that restricts to a homeomorphism from $(0,1)$ to this $F_Q$. 
This last point is important in that it allows us to use the next result, Theorem ~\ref{p:cells}, to show that every  
stratum is homeomorphic to an open ball, since each stratum  is indexed by some $Q$ which can  be used  as the input to Theorem ~\ref{thm:CW}.  
\end{example}

Now  we come to the main result in this section. 

\begin{thm}\label{p:cells}
Consider any 
$w \in W$, any $p\in U(w)$, any word $(i_1,\dots ,i_d)$ with  $\delta (i_1,\dots ,i_d) = w$, and any subword $Q$ of $(i_1,\dots ,i_d)$.
The stratum  $F_Q$ 
is either empty or
a cell of dimension $|Q| - \ell(w)$.
 $F_Q$ is empty  precisely when
$\delta(Q) \not = w$.   $F_Q$  is a single point 
exactly when  $Q$ is a reduced word for $w$. 
 The standard cell stratification of  $\RR_{\ge 0}^{d}$ induces a 
cell stratification of $f_{(i_1,\dots ,i_d)}^{-1}(p)\cap \RR_{\ge 0}^{d}$ into the cells $F_Q$ having $Q$ a subword of $(i_1,\dots ,i_d)$ with $\delta (Q) =w$.   

More generally, for any  $p\in U(w)$ and any
word $(i_1,\dots ,i_{d'})$ having $\delta (i_1,\dots ,i_{d'})\ge w$, 
the standard cell stratification of $\RR_{\ge 0}^{d'}$ induces a cell stratification of 
$f_{(i_1,\dots ,i_{d'})}^{-1}(p)\cap \RR_{\ge 0}^{d'}$ into the cells $F_Q$ having $Q$ a subword of $(i_1,\dots ,i_{d'})$ with  
$\delta (Q) = w$.   On the other hand, for any word $(i_1,\dots ,i_{d'})$ not greater than or equal to $w$ in Bruhat order, 
$f_{(i_1,\dots ,i_{d'})}^{-1}(p)\cap \RR_{\ge 0}^{d'} = \emptyset $.  
\end{thm}

\begin{proof}
First suppose $\delta (i_1,\dots ,i_d) = w$.  When $\delta (Q) = w$, the fact that $F_Q$ is homeomorphic to an open ball of 
dimensional $|Q|-l(w)$ 
follows from Theorem ~\ref{thm:CW} by noting that   $f_{F_Q}|_{(0,1)^{|Q|-l(w)}}$ has image exactly the  stratum 
$F_Q \subseteq \RR_{\ge 0}^{d}$.  This shows that the set of  such  strata $F_Q$ comprise 
 a cell decomposition of $f_{(i_1,\dots ,i_d)}^{-1}(p)\cap \RR_{\ge 0}^d$.
The  further requirement  needed for this cell decomposition  
$$\{ F_Q \mid  Q \hspace{.05in}{\rm is}\hspace{.05in}{\rm a}\hspace{.05in}{\rm subword}\hspace{.05in}{\rm of} \hspace{.05in} (i_1,\dots ,i_d)\hspace{.05in}{\rm and}\hspace{.05in} \delta (Q)=w\} $$ to be 
a cell stratification for $f_{(i_1,\dots ,i_d)}^{-1}(p)\cap \RR_{\ge 0}^{d}$
  is  proven in Lemma \ref{decomp-is-strat}.

Our  claims regarding  empty strata  for $\delta (Q) \ne w$ and empty cell stratifications for 
$(i_1,\dots ,i_{d'})$ with $\delta (i_1,\dots ,i_{d'})$ not greater than or equal to $w$ in Bruhat order   
follow from Proposition ~\ref{Lu-first-prop}.

To see our claim  regarding words $(i_1,\dots ,i_{d'})$ having $\delta (i_1,\dots ,i_{d'}) >  w$, 
we use the consequence of  Proposition ~\ref{Lu-first-prop}  that a  stratum $F_Q$ can only by nonempty if $\delta (Q) = w$, allowing us to restrict attention to the set of  such strata. 
Now consider each maximal subword  $Q$ of $(i_1,\dots ,i_{d'})$ having Demazure product $w$.  We showed already  that we have the  desired cell stratification  if $(i_1,\dots ,i_{d'}) = Q$, and therefore also if we restrict to the support of $Q$ for any fixed subword $Q$ of 
$(i_1,\dots ,i_{d'})$ having $\delta (Q) = w$.  We claim that the  cell stratification   obtained  by restricting to  the support of such a maximal  subword $Q$ of $(i_1,\dots ,i_{d'})$  having $\delta (Q)=w$  is compatible with the  cell stratification obtained  by instead  restricting to the support of $Q'$ for any other  maximal subword $Q'$ of $(i_1,\dots ,i_{d'})$ also  having 
$\delta (Q') = w$, by which we mean  that any subword $Q''$  of $(i_1,\dots i_{d'})$ that is a subword of  both $Q$ and of $Q'$ and satisfies $\delta (Q'') = w$   gives rise to  strata in the restrictions to $supp(Q)$ and to  $supp(Q')$  that exactly coincide; the claim that these strata  coincide follows from the fact that in either case the resulting  stratum $F_{Q''}$ is obtained by intersecting the stratum of 
$\RR_{\ge 0}^{d'}$  whose positive coordinates are exactly those in $supp(Q'')$  
with $f_{(i_1,\dots ,i_{d'})}^{-1}(p)$.
\end{proof}

\begin{lemma}\label{decomp-is-strat}
Consider any  $p\in U(w)$ and any word $(i_1,\dots ,i_d)$ with $\delta (i_1,\dots ,i_d)=w$.  
The decomposition of $f_{(i_1,\dots ,i_d)}^{-1}(p)\cap \RR_{\ge 0}^{d}$    into strata  
$$\{ F_Q| Q \hspace{.05in} {\rm is}\hspace{.05in}{\rm a}\hspace{.05in}{\rm subword}\hspace{.05in}{\rm of}\hspace{.05in} (i_1,\dots ,i_d)\hspace{.05in}{\rm with}\hspace{.05in}\delta (Q) = \delta (i_1,\dots ,i_d) \} $$ 
has the property that whenever any point of a strata $F_P$ is in the closure of a stratum $F_Q$ then  $F_P\subseteq \overline{F_Q}$.  In other words, 
$F_P\cap \overline{F_Q}\ne \emptyset $ implies $F_P\subseteq \overline{F_Q}$.  
\end{lemma}

\begin{proof}
Our assumption that  $F_P\cap \overline{F_Q}\ne \emptyset $ ensures  that 
the support of $F_P$, namely  the set of  parameters giving rise to the strictly positive  coordinates  in the points of  $F_P$, 
must be a subset of the support of $F_Q$. 
%
We will construct a series of strata 
$F_{P_0}, F_{P_1},F_{P_2},\dots ,F_{P_{s-1}},F_{P_{s}}$ with  
$ \dim F_{P_{i+1}} = \dim F_{P_i} + 1$ for $i=0,1,\dots ,s-1$  and with $F_{P_0}=F_P$ and $F_{P_s}=F_Q$; 
we will show $F_{P_i}\subseteq \overline{F}_{P_{i+1}}$  (and hence  $\overline{F}_{P_i}\subseteq \overline{F}_{P_{i+1}}$) for  $i=0,1,\dots ,s-1$. 
To accomplish this  goal, we will construct  for any $x = (k_1,\dots ,k_d) \in F_{P_i}$ 
 a path in $F_{P_{i+1}}$ having $x$ as a limit point. 
We focus on the case of $i=0$, noting that the same reasoning applies  for each $i\in \{ 0,1,\dots ,s-1\} $.  

Given $x = (k_1,\dots ,k_d) \in F_{P_0}$, we 
choose a   stratum $F_{P_1}$ and a  path $p_{P_1}$ in $F_{P_1}$  having $x$ as a limit point
 as follows.  Take the rightmost parameter $t_r$   in the support of $F_Q$ that is  not in the support of $F_{P_0}$.  Let $F_{P_1}$ be the stratum  
  whose support is the support of $F_{P_0}$ with $t_r$ added to it.   
 Notice that $i_r$ must have a deletion partner 
  in  the subword $P_0\cup i_r$  of $Q$ since $\delta (P_0) = \delta (Q)$.  Without loss of generality, assume this deletion partner is to the left of $i_r$.  
 Denote by $i_l$ the rightmost  deletion partner for $i_r$ that is  to the left of $i_r$ in  
 $P\cup i_r$.    The  path $p_{P_1}$ 
  will hold fixed the 
 values of all parameters to the left of $t_l$ as well as the values of all parameters to the right of $t_r$, giving them the same values they have at the point $x$.  
 This  allows us to multiply  $p$  on the left by $x_{i_{l-1}}(k_{l-1})\cdots x_{i_1}(-k_1)$ and on 
 the right by  $x_{i_d}(-k_d)\cdots x_{i_{r+1}}(-k_{r+1})$ to obtain a new point $p'\in U(w')$ for some $w'\in W$ so as to reduce what needs to be done to the task of 
 describing a suitable projected path $p_{P_1}^{proj}$ 
 in $f_{(i_l,\dots ,i_r)}^{-1}(p')$. 
 This 
 will be the projection of our path $p_{P_1}$ to the coordinates 
 $t_l,t_{l+1},\dots ,t_r$ and so will  complete  the description of the  path $p_{P_1}$.
 First note that  $k_l$ must satisfy 
  $k_l = t_l^{\max } 
  > 0$ since $k_r=0$ in $x = (k_1,\dots ,k_d)$ and since $i_l$ is non-redundant in $(i_l,\dots ,i_{r-1})$;  it may help some readers to note that 
  we are not regarding  $t_l^{\max }$ as a function of $k_1,\dots ,k_{l-1}$ because of our step 
  replacing $(i_1,\dots ,i_d)$ by $(i_l,\dots ,i_r)$ and replacing $p$ by $p'$. 
 
  \commenth{I just corrected a typo, changing $(1,k_l]$ to $(0,k_l)$.}
    Our  projection  $p_{P_1}^{proj}$ 
   is obtained by considering each real number $k_l' \in (0,k_l)$ and  describing  how to obtain the 
   values of all the parameters $k'_{l+1},\dots ,k_r'$ as continuous functions of $k_l'$.
    We hold fixed at 0 any parameter  $t_j$ for $l< j <  r$ such that $k_j=0$, i.e.,  any parameter  $t_j$ 
   whose value at the point  $x$ is 0.
Each   parameter  $t_j$  for $l < j <  r$   whose value has not already been fixed at 0   
 either (a)  has $i_j$ non-redundant within $(i_j,\dots ,i_r)$ or (b) has $i_j$ redundant within $(i_j,\dots ,i_r)$.
In case (a),  we apply Lemma ~\ref{last-letter-determined} to deduce that  the value of the parameter $t_j$ is 
uniquely determined as a function of $k'_{l},k'_{l+1},\dots ,k_{j-1}',p'$. 
 In case (b), we choose  for each 
  $t_j$ such that $i_j$ is redundant  
(proceeding left to right)  the unique value that  preserves the ratio 
$k_j/t_j^{\max }(k_l,\dots ,k_{j-1}) $, in the following sense;   
we let $$k_j' = k_j\cdot \frac{t_j^{\max }(k_l',\dots ,k_{j-1}')}{t_j^{\max }(k_l,\dots ,k_{j-1})}$$ or in other words we 
set $k_j'$ to  the unique value  satisfying
$$\frac{k_j'}{t_j^{\max }(k_l',\dots ,k_{j-1}')} = \frac{k_j}{t_j^{\max }(k_l,\dots ,k_{j-1})}$$ for our given point $(k_1,\dots ,k_d)$ using the  values $k_l',\dots ,k_{j-1}'$ 
already determined from left to right.  
Since $i_r$ is the rightmost letter in $(i_l,\dots ,i_r)$, we always have 
$i_r$ non-redundant in the word  $(i_r)$, implying  that 
$k_r'$ is  uniquely determined by $p'$ together with  the values $k_l',k_{l+1}',\dots ,k_{r-1}'$  of the parameters to its left.  

Next we  confirm  that the values   $k_{l+1}',\dots ,k_{r-1}',k_r'$  chosen in this manner are each continuous functions of  the value $k_l'$  chosen for
the parameter $t_l$.  
For  each parameter   $t_j$  such that 
 $i_j$  is 
 non-redundant in $(i_j,\dots ,i_r)$, this is
immediate from  Lemma \ref{cont-forced-value}. 
 For those  parameters $t_j$  with  $i_j$ redundant in $(i_j,\dots ,i_r)$, 
  Lemma \ref{cont-max} yields   that   $t_j^{\max }$ is a continuous function of the values for the parameters $t_l,\dots ,t_{j-1}$
  to its left; by induction, we may assume that the values of the parameters $t_{l+1},\dots ,t_{j-1}$  
  are all themselves 
 continuous functions of the value  $k_l'$ of the parameter $t_l$. 
  Since $k_j$ and $t_j^{\max }(k_l,\dots ,k_{j-1})$ are constants, this shows that $k_j'$ is a continuous function of  the value $k_l'$ for the parameter 
 $t_l$ in this case   as well.  
A consequence of all this is that  the parameter $t_r$  which takes the value   0 at the point  $x$ and is being increased  from 0 along this path as we move away from $x$   is a continuous function of  $t_l$ since it is a continuous function of parameters that are each themselves continuous functions of $k_l'$. 
 By construction and by continuity, we may observe   that as  $t_l$   gets arbitrarily close to its maximal value $k_l^{\max } := k_l$,   the parameter 
 $t_r$ gets arbitrarily close to 0; 
 likewise observe  that  each 
 parameter $t_j$ for $l<j<r$  gets 
 gets arbitrarily close to its  value at the point $x$ as $t_r$ approaches 0.   
 Thus, we have exhibited the existence of  the desired path within $F_{P_1}$ having  $x$ as a limit point.
%
\end{proof}

\begin{remark}
To prove that our stratification for $f_{(i_1,\dots ,i_d)}^{-1}(p)\cap \RR_{\ge 0}^{d}$ is actually a CW decomposition, 
it would  suffice to provide characteristic maps.   One might hope to use the maps $\{ f_{F_Q} \} $ that we have defined, extending them to  the  larger domain 
$[0,1]^{d-l(w)}$. 
The resulting  
maps  $\{ f_{F_Q} \} $  are well-defined.   
 However,  they are not continuous  in general.  Thus, these maps 
 do  not endow  our  cell stratification  for $f_{(i_1,\dots ,i_d)}^{-1}(p)\cap \RR_{\ge 0}^{d}$ with  the structure of a 
 CW complex, let alone a regular CW complex.  Nonetheless, it  seems quite  plausible our cell decomposition is a regular CW decomposition with some other choice of 
 characteristic maps.
\end{remark}

We conclude this section by proving  that the bijection from Lemma ~\ref{generalized-Lusztig-homeom} is a homeomorphism.  This
is not needed  
for  our other results, but could  be of interest in its own right, as a sort of generalization of part of part (d) of  Theorem ~\ref{Lu-theorem}. 

\begin{thm}\label{generalized-Lusztig-homeo}
Given $w\in W$, let   $(i_1,\dots , i_d)$ be any   word with  
$\delta (i_1,\dots ,i_d) =w$.  
Consider 
$$D_{k_{j_1},\dots ,k_{j_s}}   =    
 \{ (t_1,\dots ,t_d) \in \RR^{d}_{\ge  0} \mid  t_{j_l'}  > 0 \hspace{.03in} {\rm for}\hspace{.03in} 1\le l \le d-s \hspace{.075in}{\rm and}\hspace{.075in} 
 t_{j_r} = k_{j_r}  \hspace{.03in}{\rm   for}\hspace{.03in} 1\le r\le s \}  $$
for  
any  fixed choice of constants  $k_{j_1},\dots ,  k_{j_s}\ge 0$ where  $\{ j_1, \dots ,j_s\}  = S$. 
Then   $f_{(i_1,\dots ,i_d)}|_{D_{k_{j_1},\dots ,k_{j_s}}}$ is 
 a homeomorphism $h_{k_{j_1},\dots ,k_{j_s}}: D_{k_{j_1},\dots ,k_{j_s}} \rightarrow im(h_{k_{j_1},\dots ,k_{j_s}})\subseteq U(w)$ 
 with 
 $im(h_{k_{j_1},\dots ,k_{j_s}}) \cong U(w) $. 
\end{thm}

\begin{proof}
Within the proof of Proposition 4.2 in \cite{Lu},  Lusztig shows that the continuous function  $f_{(i_1,\dots ,i_d)}$ is proper on the domain
$\RR_{\ge 0}^d$.  This implies 
that $h_{k_{j_1},\dots ,k_{j_s}}$  is continuous and proper on   $D_{k_{j_1},\dots ,k_{j_s}}$,  since  $h_{k_{j_1},\dots ,k_{j_s}}  = f_{(i_1,\dots ,i_d)}|_{D_{k_{j_1},\dots ,k_{j_s}}}$. 

Next observe  that $im(f_{(i_1,\dots ,i_d)})$ 
is a closed subset of Euclidean space, by virtue of being defined by closed conditions, namely being defined 
by a set of equalities and  weak inequalities; more specifically,  $im(f_{(i_1,\dots ,i_d)})$ 
is specified by requiring  that certain regular  functions (i.e. a set of generalized minors in the sense of \cite{FZ}) are nonnegative while other generalized minors must  equal  0.  
The fact that $im(f_{(i_1,\dots ,i_d)})$ is  a closed subset of Euclidean space implies it  is locally compact.  

 Restricting $f_{(i_1,\dots ,i_d)}$  to  the domain $D$ in which we  
 require the parameters  indexed by elements of 
 $S^C$  be positive 
 yields the space  $im(f_{(i_1,\dots ,i_d)}|_D)$ which equals  $U(w)$.  But $U(w)$ is itself 
 specified by a combination of open and closed conditions, namely the requirements that certain generalized minors must 
 be positive and that others must be 0.  This implies it is locally compact, since it may be regarded as  a closed subset of an 
 open subset of Euclidean space, hence a closed subset of a locally compact space. 
 This shows that  $im(f_{(i_1,\dots ,i_d)}|_D)$ 
  is locally compact.

Restricting the domain $D$  further to a domain $D' = D_{k_{j_1},\dots ,k_{j_s}}$ by also requiring $t_{j_1} = k_{j_1};t_{j_2}=k_{j_2};\dots ;t_{j_s}=k_{j_s}$  
gives a closed subset of $im (f_{(i_1,\dots ,i_d)}|_D)$  as the image of $f_{(i_1,\dots ,i_d)}|_{D'}$, since these further conditions  
are closed conditions, as we justify next.  We   apply the continuous map $r$ from  2.17 in \cite{Lu} to 
$im(h_{k_{j_1},\dots ,k_{j_s}})$ and observe that in this case the image of $r$ is a closed  subset $U'$  of $r(U(w))$, since it is a  subset  of $ r(U(w)) \subseteq 
\RR_{\ge 0}^{|I|}$ where  $I$ is the indexing set for our simple reflections and  $U'$ is given by closed conditions, namely by  weak inequalities $u_i\ge K_i$ (or in some cases equalities $u_i = K_i$ as explained shortly)  for $i\in I$ where $K_i$ is the sum of those values $k_{j_r} \in \{ k_{j_1},\dots ,k_{j_s}\} $ which satisfy the further condition that  $i_{j_r} = i$; the case of equality arises for any $i\in I$ such that  every $i_j$ within $(i_1,\dots ,i_d)$ that satisfies $i_j=i$ has $j\in S = \{ j_1,\dots ,j_s\} $.  
Since $r$ is continuous, the inverse image of the  closed set $U'$  
is closed, and hence 
  locally compact.     But $r^{-1}(U') = im(h_{k_{j_1},\dots ,k_{j_s}})$, so this completes our proof that $im(h_{j_{j_1},\dots ,k_{j_s}})$ is locally compact.  
 
 Because  $D_{k_{j_1},\dots ,k_{j_s}}$ is a  locally 
 compact Hausdorff space and $im(h_{k_{j_1},\dots ,k_{j_s}})$ 
 is  also   locally compact and 
 Hausdorff, properness of $h_{k_{j_1},\dots ,k_{j_s}}$ 
implies    $h_{k_{j_1},\dots ,k_{j_s}}$ is a closed map. 
This  together 
with bijectivity of $h_{k_{j_1},\dots ,k_{j_s}}$ 
implies  that $h_{k_{j_1},\dots ,k_{j_s}}$ is 
an open map.  
That combines with our earlier results and observations that $h_{k_{j_1},\dots ,k_{j_s}}$ is bijective and 
continuous to show that 
$h_{k_{j_1},\dots ,k_{j_s}}$ is 
a homeomorphism from $D_{k_{j_1},\dots ,k_{j_s}}$  to $im(h_{k_{j_1},\dots ,k_{j_s}})$.

To show   $im(h_{k_{j_1},\dots ,k_{j_s}}) \cong U(w) $, we will utilize  the fact 
that our  proof  that $h_{k_{j_1},\dots ,k_{j_s}}$ is a homeomorphism to its image   applies equally well  for any  
choice of nonnegative real values for 
$k_{j_1},\dots ,k_{j_s}$.  
In  particular, in  the case with  $k_{j_1} = k_{j_2} = \cdots = k_{j_s}  = 0$ our proof above yields
 exactly $U(w)$ 
 as $im(h_{0,\dots ,0})$.  
 Changing the choice of nonnegative real values 
 $k_{j_1},\dots ,k_{j_s}$  
 does not change the 
 homeomorphism type of the domain $D_{k_{j_1},\dots ,k_{j_s}}$.  
  To construct our desired homeomorphism $H$ from $im(h_{k_{j_1},\dots ,k_{j_s}})$ to $U(w)$, 
 we use  the  map $H = h_{0,\dots ,0} \circ g \circ h_{k_{j_1},\dots ,k_{j_s}}^{-1}$ 
 with $g$ as defined next.  
 Let $g: D_{k_{j_1},\dots ,k_{j_s}} \rightarrow D_{0,0,\dots ,0} $
  be the  map 
  which  takes 
   any point  $(t_1,\dots ,t_d) \in D_{k_{j_1},\dots ,k_{j_s}}$ and sets 
 $t_{j_1}=\cdots = t_{j_s} = 0$ while leaving the values of  all other parameters unchanged.   This is clearly a homeomorphism.   The desired
 result that $H$ is a homeomorphism then  follows
 from the fact  that  $g, h_{0,\dots ,0}$ and $h_{k_{j_1},\dots ,k_{j_s}}$ (as well as their inverse maps) are  all homeomorphisms,
 implying that $H$ is  a composition of three homeomorphisms.
 \end{proof}

\section{Fiber conjecture implies 
 Fomin--Shapiro Conjecture}\label{s:fsConj}
 
We now conclude the paper   by showing how a 
proof of Conjecture ~\ref{t:subword}
  would  yield as a corollary a  new, short  proof of the Fomin-Shapiro Conjecture.  In fact,  what we show is how
 Conjecture D  
   would suffice.  More precisely  we show how to derive a new proof of 
  the Fomin-Shapiro Conjecture  from  
  contractibility of fibers, so in other words would follow from Conjecture C. 
  
  In Lemma  ~\ref{intersect-with-simplex},
  we proved that $f_{(i_1,\dots ,i_d)}^{-1}(p)\cap \RR_{\ge 0}^d$ consists entirely of points 
  $(t_1,\dots ,t_d)$ where $\sum_{i=1}^d t_i = K$ for some fixed positive  real constant $K$ 
  that is determined by $p$.    This will be useful in what follows
  since   $\Delta^{d-1}_K$ is compact  for each fixed  $K>0$ whereas $\RR_{\ge 0}^d$ is not.  
    Thus, our analysis of fibers that was carried out in  Sections \ref{sub:whole-fiber}  and \ref{s:CW}  is  pertinent to  the upcoming 
     application in this section to the 
    Fomin-Shapiro Conjecture, as we explain in  more depth  at the end of this section in Remark ~\ref{tying-together}.

 \subsection{Topological background}

\commenth{I  edited the next few sentences as a prelude to this section.}

 First  let us review the key topological result  that we will use to prove how contractibility of fibers 
would combine with our other results to yield a new proof of the Fomin-Shapiro Conjecture.  Included in this review are  a couple of proofs 
given in full  detail in our paper  of results that are folklore or  discussed elsewhere with few details.
 We will use these results  to deduce  
 the homeomorphism type for the image of $f_{(i_1,\dots ,i_d)}$
from our conjecture that the fibers  are  contractible. 

The following beautiful theorem and corollary were stated (in 
somewhat different language)  as Proposition A.1 and Corollary A.2 in \cite{GLMS}.  

\begin{thm}\label{extension-theorem}
A map $f : S^{n-1} \to S^{n-1}$  so that $f^{-1}(y)$ is contractible for all $y \in S^{n-1}$ can be extended to
a map $F: B^n \to B^n$   inducing a homeomorphism $\interior B^n \to \interior B^n$.
\end{thm}

\begin{corollary}\label{tough-topology-theorem}  
Let $\sim$ be an equivalence relation on the closed ball $B^n$ so that 
\begin{itemize}
 
 \item  all equivalence classes are contractible,
 
 \item $S^{n-1}/{\sim}$ is homeomorphic to $S^{n-1}$,
 
 \item if $x \sim y$ with $x \in S^{n-1}$, then $y \in S^{n-1}$,
 
 \item if $x \sim y$ with $x \not \in S^{n-1}$, then $y = x$.
 
\end{itemize}

 Then $B$ is homeomorphic to $B/\sim $.  
\end{corollary}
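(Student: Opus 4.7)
The plan is to apply Theorem~\ref{extension-theorem} to construct a map $F: B^n \to B^n$ that factors through the quotient $q: B^n \to B^n/{\sim}$ and thereby realizes $B^n/{\sim}$ as homeomorphic to $B^n$. The principal obstacle is choosing the right map on~$S^{n-1}$ to extend: the four hypotheses give very different information on the interior of $B^n$ (where $\sim$ has only singleton classes, by the fourth condition) and on the boundary (where the classes are nontrivial but contractible, with quotient~$S^{n-1}$). Theorem~\ref{extension-theorem} is precisely tailored to glue these two behaviors. Concretely, let $h: S^{n-1}/{\sim} \to S^{n-1}$ be the homeomorphism from the second hypothesis (which is well-defined because, by the third hypothesis, $\sim$ restricts to an equivalence relation on $S^{n-1}$), and set $g = h \circ q|_{S^{n-1}}$. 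For each $y \in S^{n-1}$ the fiber $g^{-1}(y)$ is a single $\sim$-class contained in $S^{n-1}$, hence contractible by the first hypothesis. Theorem~\ref{extension-theorem} therefore produces an extension $F: B^n \to B^n$ of $g$ whose restriction to $\interior B^n$ is a homeomorphism onto $\interior B^n$.

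Next I would check that $F$ descends through $q$ to a continuous map $\tilde F: B^n/{\sim} \to B^n$. If $x \sim y$ and either point lies in $\interior B^n$, then $x = y$ by the fourth hypothesis, so $F(x) = F(y)$ trivially; otherwise both $x, y$ lie in $S^{n-1}$, and then $F(x) = g(x) = h([x]) = h([y]) = g(y) = F(y)$. The universal property of the quotient topology then supplies the required continuous $\tilde F$ satisfying $\tilde F \circ q = F$.

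Finally, I would verify that $\tilde F$ is a homeomorphism. Surjectivity follows because $F|_{\interior B^n}$ surjects onto $\interior B^n$ while $g$ surjects onto $S^{n-1}$. For injectivity, suppose $F(x) = F(y)$: if both points are interior, injectivity of $F|_{\interior B^n}$ gives $x = y$; a pair with one interior and one boundary point is ruled out because $F$ carries the interior to the interior and the boundary to the boundary; and if both lie in $S^{n-1}$, then $g(x) = g(y)$ combined with bijectivity of $h$ forces $q(x) = q(y)$, i.e., $[x] = [y]$. Since $B^n/{\sim}$ is compact (as a continuous image of $B^n$) and $B^n$ is Hausdorff, the continuous bijection $\tilde F$ is automatically a homeomorphism.
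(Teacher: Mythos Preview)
Your proof is correct and follows essentially the same route as the paper: compose the quotient map on $S^{n-1}$ with a homeomorphism to $S^{n-1}$, apply Theorem~\ref{extension-theorem} to get $F$, and use compactness/Hausdorffness to conclude that the induced map $B^n/{\sim}\to B^n$ is a homeomorphism. The only cosmetic difference is that the paper phrases the bijectivity check as the single assertion that $\sim$ coincides with the equivalence relation $x\sim_F y \iff F(x)=F(y)$, whereas you verify descent and injectivity/surjectivity separately; these amount to the same verification.
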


\begin{proof}[Proof of Corollary \ref{tough-topology-theorem}]
 Let $\sim$ be such an equivalence relation on $S^{n-1}$.    Let $f : S^{n-1} \to S^{n-1}$ be the composite of the quotient map $S^{n-1} \to S^{n-1}/{\sim}$ with a homeomorphism to $S^{n-1}$.   Let $F : B^n \to B^n$ be produced by Theorem \ref{extension-theorem}.   Let $\sim_F$ be the equivalence relation $x \sim_F y$ if and only if $F(x) = F(y)$.   By hypothesis $\sim$ and $\sim_F$ are identical.   By the universal property of the quotient topology, there is a continuous bijection $B/{\sim_F} \to B^n$.   Since the domain is compact and the target is Hausdorff, this map is a homeomorphism.
\end{proof}

We include a proof of Theorem \ref{extension-theorem} since its elements are not familiar to combinatorialists.  The strategy is to argue that the map $S^{n-1}\to S^{n-1}/{\sim}$ is cell-like and then to apply the cell-like (= CE) approximation theorem as well as the local contractibility of the homeomorphism group of a manifold.  

The following definition is taken from the survey of Dydak \cite{Dydak}.

\begin{definition}
 A topological space is {\em cell-like} if any map  to a CW complex is null-homotopic.  A map $f : X \to Y$ is {\em cell-like} if $f$ is proper (the inverse image of any compact set is compact) and $f^{-1}(y)$ is cell-like for all $y \in Y$.
\end{definition}

The key result in this area is Siebenmann's CE-approximation theorem.

\begin{theorem}
Let $f : X \to Y$ be a cell-like map between topological manifolds of the same dimension.   Then $X$ and $Y$ are homeomorphic.
\end{theorem}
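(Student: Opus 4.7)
The plan is to deduce the homeomorphism $X \cong Y$ by proving the much stronger assertion that $f$ is itself a \emph{near-homeomorphism}, meaning a uniform limit of homeomorphisms $X \to Y$; the existence of a single homeomorphism is then an immediate consequence. The central criterion is Bing's shrinking principle: a proper surjection $f : X \to Y$ between locally compact metric spaces is a near-homeomorphism if and only if for every open cover $\mathcal{U}$ of $Y$ and every $\varepsilon > 0$ there is a homeomorphism $h : X \to X$ with $d(f \circ h, f) < \varepsilon$ and with each set $h(f^{-1}(y))$ of $\mathcal{U}$-small diameter. Reducing the theorem to the production of such shrinking homeomorphisms is the first move.

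Next I would use the manifold hypothesis together with cell-likeness to establish \emph{cellularity} of each fiber in $X$. Because $X$ is an $n$-manifold and $f^{-1}(y)$ is cell-like, McMillan's cellularity criterion (in Lacher's topological form) guarantees that each fiber has arbitrarily small neighborhoods homeomorphic to $\RR^n$, provided a 1-LCC condition holds near the fiber. The 1-LCC condition is forced by the hypothesis that $Y$ is itself a manifold: any small loop in $X \setminus f^{-1}(y)$ that failed to contract in a small neighborhood would project to a local homological obstruction near $y \in Y$ incompatible with the local Euclidean structure of $Y$. This step is the bridge between the abstract hypothesis ``cell-like'' and the concrete geometric input needed by the shrinking machinery.

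The heart of the argument is then Edwards' CE approximation theorem: for $n \geq 5$, a cell-like proper surjection $f : X \to Y$ between $n$-manifolds is a near-homeomorphism if and only if $Y$ satisfies the \emph{disjoint disks property} (DDP). Since $Y$ is an $n$-manifold with $n \geq 5$, DDP is automatic from general position, so Edwards' theorem directly produces homeomorphisms $h_k : X \to Y$ with $h_k \to f$ uniformly, whence $X \cong Y$. This is the cleanest case, and is where the bulk of the technical content lives; Edwards' proof itself proceeds by an intricate inductive construction of shrinking homeomorphisms that verifies Bing's criterion on successively finer scales.

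The main obstacle is the low-dimensional range, which does not fall under Edwards' theorem and requires separate tools. For $n \leq 2$ the conclusion follows from classical Moore--Bing decomposition theory for surfaces. For $n = 3$ the statement is Armentrout's theorem, proved by shrinking arguments tailored to $3$-manifolds rather than the DDP framework. The case $n = 4$ is by far the most delicate: it relies on Quinn's controlled $h$-cobordism theorem together with his proof of the annulus conjecture in dimension four, and assembling these low-dimensional inputs into a unified statement is the principal source of difficulty. In each dimension the shrinking is accomplished by genuinely different geometric constructions, which is why the theorem as stated should be regarded as the culmination of an extended body of work in geometric topology rather than as the outcome of a single argument.
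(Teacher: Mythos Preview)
The paper does not prove this theorem at all; it is quoted as a black box from the literature, with a remark attributing the high-dimensional case to Siebenmann, dimensions below four to Armentrout, and dimension four to Quinn. Your proposal is therefore not competing against any argument in the paper but is instead a sketch of how the cited literature proceeds, and at that level it is broadly consonant with the paper's remarks: shrinking criteria in high dimensions, separate low-dimensional arguments, and Quinn's controlled-topology input for $n=4$.

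Two small points of divergence are worth flagging. First, for $n \geq 5$ you invoke Edwards' DDP theorem, whereas the paper (and the historical record for the manifold-to-manifold statement) credits Siebenmann; Edwards' result is the later generalization in which $Y$ is only assumed to be an ANR, and the DDP hypothesis is what forces $Y$ to be a manifold. When $Y$ is already assumed to be a manifold, Siebenmann's argument applies directly and the DDP step is superfluous. Second, your intermediate passage through McMillan's cellularity criterion and the 1-LCC condition is not quite right as stated: the 1-LCC condition concerns how $f^{-1}(y)$ sits inside $X$, and your justification that it is ``forced by the local Euclidean structure of $Y$'' is not a valid deduction. In any case Siebenmann's proof does not route through cellularity of fibers in this way, so this step is both inaccurate and unnecessary for the manifold-to-manifold theorem you are trying to establish.
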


\begin{remark}
In fact, if $Y$ is, in addition, a metric space, then for any continuous $\varepsilon: X \to (0,\infty)$, there is a homeomorphism $g: X \to Y$ so that for all $x \in X$, $d(f(x),g(x)) < \varepsilon(x)$.
\end{remark}

\begin{remark}
 The above theorem was proven by Siebenmann \cite{Si} in dimensions greater than four, by Armentrout for dimensions less than four, and by Quinn \cite{Qu2} for dimension four.  
\end{remark}

\begin{proof}[Proof of Theorem \ref{extension-theorem}]
We will define a one-parameter family 
$$\Phi : [0,1] \to \map(S^{n-1},S^{n-1})$$   of self-maps of
$S^{n-1}$ so that $\Phi_0 = f$ and $\Phi_r$ is a
homeomorphism for $r \in (0,1]$.  Given such a $\Phi$,  define $F(rx) =
r \Phi_{1-r} (x)$ for $r \in [0,1] $ and $x \in S^{n-1}$.

The two key ingredients in producing $\Phi$ are the CE-Approximation Theorem and local contractibility of the homeomorphism group of a compact manifold, due independently to \v{C}ernavski{\u\i} \cite{Cern} and Edwards-Kirby \cite{EK}.  

The topology on $\map(S^{n-1},S^{n-1})$ and its subspace $\homeo(S^{n-1})$ is given by the uniform metric $d(g,h) = \sup_{x \in S^{n-1}} \| g(x) - h(x) \|$.  Local contractibility of $\homeo(S^{n-1})$ implies that for every $\epsilon > 0$, there is a $\delta > 0$ so that if $g,h \in B_{\delta}(\Id) \subset \homeo(S^{n-1})$, there is a path from $g$ to $h$ whose image lies in $B_\epsilon(\Id)$.  For $k \in \homeo(S^{n-1})$, right translation $R_k : \homeo(S^{n-1}) \to \homeo(S^{n-1}); R_k(g) = g \circ k$ is an isometry; hence for every $\epsilon > 0$, there is a $\delta > 0$ so that if $g,h \in B_{\delta}(k)$,  there is a path from $g$ to $h$ whose image lies in $B_\epsilon(k)$.  

Now for every $i \in \ZZ_{>0}$, choose $\delta_i > 0$ so that $g,h \in B_{\delta_i}(k)$ implies there is a path from $g$ to $h$ which lies in $B_{1/2^i}(k)$.  We also make the choices so that $\delta_i > \delta_{i+1}$ for all $i$.  To define the map $\Phi: [0,1] \to \homeo(S^{n-1}); r \mapsto \Phi_r$ we set $\Phi_0 = f$, define $\Phi_{1/2^i}$ using the CE-Approximation Theorem, and then connect the dots using local contractibility.  Using the CE-Approximation Theorem, choose homeomorphisms $\Phi_{2^i}$ so that $d(f,\Phi_{1/2^i}) < \delta_i/2$.  Then by the triangle inequality, $d(\Phi_{2^{i+1}},\Phi_{2^i}) < \delta_i$.  By the choice of $\delta_i$ there is a path $\Phi : [1/2^{i+1},1/2^i] \to \homeo(S^{n-1},S^{n-1})$ from $\Phi_{1/2^{i+1}}$ to $\Phi_{1/2^i}$ which lies in a ball of radius $1/2^i$.   Concatenation gives our desired path $\Phi: [0,1] \to \map(S^{n-1},S^{n-1})$.
\end{proof}

\subsection{Contractibility of fibers implies  Fomin-Shapiro Conjecture}

\begin{thm}\label{contract-implies-fs}
Suppose $W$ is finite.  Given any $w\in W$,   
 let 
$Y_w^o$  
be the  cell indexed by $w$   in 
the Bruhat decomposition of the link of the identity  in the
totally nonnegative, real  part of the unipotent radical of a Borel subgroup
in a reductive, connected algebraic  group over $\CC$ defined and split
over $\RR$.    
 Let $(i_1,\dots ,i_d)$ be any reduced word for any element of $W$. 
Contractibility of $f_{(i_1,\dots ,i_d)}^{-1}(p)$  
for all   $p\in Y_w^o$
 for   all  $w\in W$  satisfying 
$w \le_{Bruhat} \delta (i_1,\dots ,i_d) $ 
 would  imply that  
$\overline{Y_{\delta(i_1,\dots ,i_d)}^o} $ 
 is a regular CW complex homeomorphic to a closed ball. 
 In other words, Conjecture C  
 would imply that $Y_{ \delta (i_1,\dots ,i_d)} := \overline{Y_{\delta (i_1,\dots ,i_d)}^o} $ is a regular CW complex homeomorphic to a closed ball.  
\end{thm}

\begin{proof}
We interpret $Y_w:= \overline{Y_w^o}$ as the image of the closed simplex $\Delta^{d-1}$ (also sometimes denoted
$\Delta^{d-1}_1$)  under the 
map $f_{(i_1,\dots ,i_d)}$ given by any  reduced word $(i_1,\dots ,i_d)$ for $w$. 
We will  prove that under our contractibility hypothesis  
all of  the conditions needed to apply  Corollary~\ref{tough-topology-theorem} are met.    

Consider  the 
image of $f_{(i_1,\dots ,i_d)}$  endowed with our given stratification (namely with cells being the images of the cells of the simplex where two cells  of the simplex 
$\Delta^{d-1}$ 
have the  same image if and only if their associated words have the same Demazure product).  Recall from \cite{FS} that this stratification of $im(f_{(i_1,\dots ,i_d)})$ 
has closure poset the Bruhat order.  Because we assume $W$ is a finite group,
this is known to be a CW poset (by virtue of the shelling
of Bj\"orner and Wachs from \cite{BW} or of Dyer from \cite{Dy} together  
the fact that it is thin, which is 
clear by virtue of its definition).  We assume by induction the desired result for all reduced words 
strictly shorter than length $d$.  This 
inductive hypothesis ensures that each closed cell in the boundary of 
the image is a ball, and that its stratification resulting from a reduced  subword
of $(i_1,\dots ,i_d)$ of strictly shorter length is a regular CW decomposition.

In particular, this implies that  our stratification restricted to 
the boundary of  $im(f_{(i_1,\dots ,i_d)})$ is a regular CW complex.  This  in turn 
implies that the boundary of the image  is homeomorphic to the order complex of
 its closure poset (after removal of the element $\hat{0}$ representing the empty face).  But  this  is a sphere whose dimension equals the dimension of the complex, by virtue of  Bruhat order being thin and shellable which implies that each open interval of Bruhat order has   order
complex homeomorphic to  such a sphere.  Thus, the restriction of 
$f_{(i_1,\dots ,i_d)}$ to the boundary of the simplex has image a sphere of appropriate dimension.

By a result of Lusztig which is   recalled in Theorem ~\ref{Lu-theorem}, 
the restriction of $f_{(i_1,\dots ,i_d)}$ to the 
interior of the simplex $\Delta^{d-1}$  is a homeomorphism.  
By Lemma ~\ref{open-cell-image-description},  the image of the 
interior of the simplex is nonintersecting with the image of the boundary 
of the simplex.   Finally, note that the preimage of  $f_{(i_1,\dots ,i_d)}$ is a ball by virtue of being a  closed simplex.   Combining with our contractibility hypothesis for fibers, we have  all of the hypotheses needed to apply Corollary ~\ref{tough-topology-theorem}.  Thus we may 
conclude that the image of $f_{(i_1,\dots ,i_d)}$  is a closed  ball.  Since this 
argument works for any reduced word $(i_1,\dots ,i_d)$ for any $w\in W$, this completes the  proof that contractibility of fibers would yield a new proof of the Fomin-Shapiro Conjecture.
%
\end{proof}

\commenth{I added the next remark.   I also added a reference to this remark in the introduction.}

\begin{remark}\label{tying-together} 
By Lemma ~\ref{intersect-with-simplex}, our cell stratification  of $f^{-1}_{(i_1,\dots ,i_d)}(p) \cap \RR_{\ge 0}^d$ given in Theorem ~\ref{decomp-is-strat}  is 
 a cell stratification of $f^{-1}_{(i_1,\dots ,i_d)}(p) \cap \Delta^{d-1}_K$ for some
$K>0$.    If $p\in Y_w^o$, then by definition of $Y_w^o$  this implies $K=1$ and hence $\Delta^{d-1}_K = \Delta^{d-1}$.  
  Conjecture D, a conjecture we put forward in the introduction,  would imply that  this cell stratification is a regular CW decomposition (as Conjecture D  is a slight generalization of this statement).  
 If this  were a regular CW decomposition,  then our combinatorial results in Propositions  ~\ref{p:fiberNerve} and
~\ref{c:dualBlock}   
would combine to  imply the contractibility hypothesis of Theorem ~\ref{contract-implies-fs}.  By Theorem ~\ref{contract-implies-fs}, we could then deduce that
$Y_w$ would be a regular CW complex homeomorphic to a closed ball.    Thus,  a proof of Conjecture D 
would  
yield  a new proof of the Fomin-Shapiro Conjecture. 

  In Section  ~\ref{s:CW}, we proved both that our decomposition of $f^{-1}_{(i_1,\dots ,i_d)}(p) \cap \RR^d_{\ge 0}$  induced by the 
 standard  cell decomposition of $\RR^d_{\ge 0}$  is a 
cell stratification and also 
that many of the cell incidences in this cell stratification are regular incidences, significant steps towards a proof of Conjecture D.
\end{remark}

\section*{Acknowledgments}
The authors thank  Matthew Dyer,  Alex Engstr\"om, Sergey Fomin, Michael Gekhtman,  Steven Karp, Richard Kenyon, Allen Knutson, Bernd Sturmfels, Shmuel Weinberger and Lauren Williams for helpful comments and discussions.

\raggedbottom

\end{document}